\documentclass[11pt,english]{smfart}

\usepackage{amsmath,leftidx,amsthm,sfheaders}
\usepackage[all]{xy}
\usepackage{xspace,smfthm}
\usepackage[psamsfonts]{amssymb}
\usepackage[latin1]{inputenc}
\usepackage{graphicx,color}
\usepackage{hyperref,fancyhdr}

\newtheorem*{remark}{\bf Remark}
\newtheorem*{question}{\bf Question}

\newtheorem{theorem}{\bf Theorem}[section]
\newtheorem{proposition}[theorem]{\bf Proposition}
\newtheorem{definition}[theorem]{\bf Definition}
\newtheorem{Theorem}{\bf Theorem}

\newtheorem*{claim}{\bf Claim}
\newtheorem{lemma}[theorem]{\bf Lemma}
\newtheorem{corollary}[theorem]{\bf Corollary}

\newtheorem{Corollary}[Theorem]{\bf Corollary}

\def\C{{\mathbb C}}
\def\N{{\mathbb N}}
\def\R{{\mathbb R}}

\def\B{\mathbb{B}}
\def\D{\mathbb{D}}

\def\J{\mathcal{J}}

\def\p{\mathbb{P}}

\def\supp{\textup{supp}}

\def\pe{\textup{ := }}
\def\rat{\textup{Rat}}
\def\Per{\textup{Per}}
\def\bif{\textup{bif}}

\def\Mand{\mathbf{M}}
\def\J{\mathcal{J}}

\def\un{{\underline{n}}}
\def\uw{{\underline{w}}}

\topmargin 0cm
\oddsidemargin 0.5cm
\evensidemargin 0.5cm
\textheight 22cm
\textwidth 15cm

\def\and{{\quad\text{and}\quad}}

\title{Distribution of postcritically finite polynomials II: speed of convergence}
\author{Thomas Gauthier $\&$ Gabriel Vigny}
\address{LAMFA, Universit\'e de Picardie Jules Verne, 33 rue Saint-Leu, 80039 AMIENS Cedex 1, FRANCE}
\email{thomas.gauthier@u-picardie.fr, gabriel.vigny@u-picardie.fr}

\begin{document}
\begin{abstract}
In the moduli space of degree $d$ polynomials, we prove the equidistribution of postcritically finite polynomials toward the bifurcation measure. More precisely, using complex analytic arguments and pluripotential theory, we prove the exponential speed of convergence for $\mathcal{C}^2$-observables. This improves results obtained  with arithmetic methods by Favre and Rivera-Letellier in the unicritical family 
and Favre and the first author in the space of degree $d$ polynomials. \\
We deduce from that the equidistribution of hyperbolic parameters with $(d-1)$ distinct attracting cycles of given multipliers toward the bifurcation measure
with exponential speed for $\mathcal{C}^1$-observables. As an application, we prove the equidistribution (up to an explicit extraction) of parameters with $(d-1)$ distinct cycles with prescribed multiplier  toward the bifurcation measure for any $(d-1)$ multipliers outside a pluripolar set.
\end{abstract}

\maketitle

\tableofcontents

\section{Introduction}

In a holomorphic family $(f_\lambda)_{\lambda\in\Lambda}$ of degree $d\geq2$ rational maps, Ma\~n\'e, Sad ans Sullivan~\cite{MSS} studied quite precisely the notion of $J$-stability. In particular, they related the instability of critical orbits under small perturbations of the dynamics and instability of periodic cycles (see also~\cite{lyubich}). 

This description has been enriched by DeMarco~\cite{DeMarco1} who introduced a current $T_\bif$ which is supported exactly on the bifurcation locus. This current and its self-intersections reveal to be an appropriate tool to study bifurcations from a measurable viewpoint. Now, consider the particular case of the moduli space $\mathcal{P}_d$ of critically marked complex polynomials of degree $d$ modulo conjugacy by
affine transformations. In that space, the maximal self-intersection of the bifurcation current induces a \emph{bifurcation measure} $\mu_\bif$, introduced by Bassanelli and Berteloot \cite{BB1}, which may be considered as the analogue of the harmonic measure of the Mandelbrot set when $d\geq 3$. The support of this measure is where maximal bifurcation phenomena occur. Recall that a polynomial is \emph{postcritically finite} if all its critical points have finite orbit, it is \emph{postcritically finite hyperbolic} if all its critical points are periodic cycle (the Julia set of such polynomials is then hyperbolic).
Recall that a polynomial is \emph{postcritically finite} if all its critical points have finite orbit.
 Using the bifurcation measure, it is proved in \cite{BB1} that  its support is accumulated by postcritically finite hyperbolic parameters (which are in a certain way the most stable parameters) and that it coincides with the closure of parameters having a maximal number of neutral cycles. Still using the bifurcation measure, the first author also showed in~\cite{Article1} that its support has maximal Hausdorff dimension.
 
As the bifurcation locus of the moduli space $\mathcal{P}_d$ is a  complicated fractal set, a natural approach is to study it on some dynamical slices. In particular, one can study the  maps having a superattracting orbit of fixed period $n$. The study of such a set is difficult and involves naturally arithmetic, combinatorics, topology and complex analysis (see e.g. \cite{MilnorCubic}). Furthermore, to understand the global geography of the moduli space $\mathcal{P}_d$, it is useful to approximate the bifurcation current (resp. the bifurcation measure) by dynamically defined hypersurfaces (resp. finite sets).  Following the topological description of the bifurcation locus given by~\cite{MSS}, one can try to approximate the bifurcation currents by different types of phenomena: existence of critical orbit relations or periodic cycles of given nature. 

~

\par
Let us focus first on the simplest case of the unicritical family, i.e. the family defined by $p_c(z):=z^d+c$, $c\in\C$. Consider the set $\Per(n):=\{c\in\C \, ; \ p_c^n(0)=0\}$ of parameters that admit a superattractive periodic point of period dividing $n$. Recall that the \emph{Multibrot set} $\Mand_d$ is defined by
$\Mand_d:=\{c\in\C \, ; \ \J_c \text{ is connected}\}$. Beware that $\Mand_d$ is a non-polar connected compact set. Finally, let $\mu_{\Mand_d}$ be the harmonic measure of $\Mand_d$.
Then, the first result in this direction goes back to Levin~\cite{Levin1}. In the quadratic family ($d=2$), he proved that the measure equidistributed on the set $\Per(n)$ converges to the harmonic measure of the Mandelbrot set, as $n\to\infty$. Favre and Rivera-Letelier~\cite{FRL} gave a \emph{quantitative} version of Levin's in the unicritical family, using arithmetic methods. Namely, they proved that there exists $C>0$ such that for any $n\geq1$ and $\varphi\in\mathcal{C}^1_c(\C)$:
$$\left|\frac{1}{d^{n-1}}\sum_{c\in\Per(n)}\varphi(c)-\int_\C\varphi\, \mu_{\Mand_d}\right|\leq C\left(\frac{n}{d^n}\right)^{\frac{1}{2}}\|\varphi\|_{\mathcal{C}^1}.$$
 Using complex analytic potential theory, we prove here the following.
\begin{Theorem}
Let $d\geq2$. Then, there exists a constant $C>0$ depending only on $d$ such that for any $\varphi\in\mathcal{C}^2_c(\C)$ and any $n\geq1$,
$$\left|\frac{1}{d^{n-1}}\sum_{c\in\Per(n)}\varphi(c)-\int_\C\varphi\, \mu_{\Mand_d}\right|\leq C\frac{n}{d^n}\|\varphi\|_{\mathcal{C}^2}.$$
\label{tm:Mandelbrot}
\end{Theorem}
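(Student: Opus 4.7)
My plan is to combine the Poincar\'e--Lelong formula with the functional equation of the dynamical Green function, reducing everything to an $L^1$ estimate on a difference of plurisubharmonic potentials. The map $c\mapsto p_c^n(0)$ is a monic polynomial in $c$ of degree $d^{n-1}$, so setting $u_n(c):=\frac{1}{d^{n-1}}\log|p_c^n(0)|$, Poincar\'e--Lelong gives $dd^c u_n=\frac{1}{d^{n-1}}\sum_{c_0\in\Per(n)}\delta_{c_0}$. The dynamical Green function $G(c):=g_c(0)$ is psh with $G\sim\frac{1}{d}\log|c|$ at infinity, and the standard Brolin--DeMarco description gives $\mu_{\Mand_d}=dd^c(dG)$. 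Subtracting, $\frac{1}{d^{n-1}}\sum_{c_0\in\Per(n)}\delta_{c_0}-\mu_{\Mand_d}=dd^c(u_n-dG)$. Pairing with $\varphi\in\mathcal{C}^2_c(\C)$ and integrating by parts (Stokes) yields
$$\left|\frac{1}{d^{n-1}}\sum_{c_0\in\Per(n)}\varphi(c_0)-\int\varphi\,d\mu_{\Mand_d}\right|\leq C\|\varphi\|_{\mathcal{C}^2}\int_K|u_n-dG|\,dA,$$
where $K:=\supp\varphi$, since $|dd^c\varphi|\leq C\|\varphi\|_{\mathcal{C}^2}dA$. It therefore suffices to prove $\int_K|u_n-dG|\,dA\leq C\,n/d^n$.

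The key analytic input comes from iterating the functional equation $g_c(p_c(z))=d\,g_c(z)$, which yields $G(c)=d^{-n}g_c(p_c^n(0))$ and hence the exact identity
$$dG(c)-u_n(c)=d^{1-n}\bigl(g_c(p_c^n(0))-\log|p_c^n(0)|\bigr).$$
The standard uniform B\"ottcher estimate $|g_c(z)-\log^+|z||\leq C_0$ for $c$ in a compact set, derived from the product expansion $\log|\varphi_c(z)/z|=\sum_{k\geq 0}d^{-k-1}\log|1+c/p_c^k(z)^d|$ of the B\"ottcher coordinate, implies the pointwise bound
$$|u_n-dG|\leq C\,d^{1-n}\bigl(1+\log^+(1/|p_c^n(0)|)\bigr).$$
The proof then reduces to the main analytic lemma
$$\int_K\log^+\!\bigl(1/|p_c^n(0)|\bigr)\,dA\leq C\,n.\qquad(\ast)$$

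The hard part is establishing $(\ast)$: the naive sub-additive bound $\log^+(1/|P|)\leq\sum_{c_j\in\Per(n)}\log^+(1/|c-c_j|)$ together with $\int_K\log^+(1/|c-c_j|)\,dA\leq \pi/2$ yields only $O(d^{n-1})$, which is off by the enormous factor $d^{n-1}/n$. The sharp linear-in-$n$ bound must exploit the specific dynamical nature of $p_c^n(0)$, not just its degree. Iterating $p_c^{k+1}(0)=p_c^k(0)^d+c$ gives the telescoping
$$\log|p_c^n(0)|=d^{n-1}\log|c|+\sum_{k=1}^{n-1}d^{n-1-k}\log\bigl|1+c/p_c^k(0)^d\bigr|,$$
with only $O(n)$ summands. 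My plan is to control $(\ast)$ by splitting the sublevel set $\{|p_c^n(0)|<1\}$ according to which of these $n-1$ summands is primarily responsible for the smallness, and using Cartan--Chebyshev type estimates on each piece. The main difficulty I anticipate is controlling the cancellations between the telescoped terms $\log|1+c/p_c^k(0)^d|$, each of which has its own logarithmic singularities along the dynamical hypersurfaces $\{p_c^k(0)^d+c=0\}\subset\C$; this careful tracking of the critical orbit is the essential complex analytic input that replaces the arithmetic height machinery of Favre--Rivera-Letellier, and is where I expect the real work to lie.
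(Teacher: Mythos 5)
Your reduction is sound and is in fact the same framework the paper uses: Poincar\'e--Lelong plus Stokes reduces the statement to an $L^1$ bound for $u_n-dG$, and your exact identity $dG-u_n=d^{1-n}\bigl(g_c(p_c^n(0))-\log|p_c^n(0)|\bigr)$ together with the uniform B\"ottcher estimate correctly isolates the crux as your inequality $(\ast)$, i.e. $\int_K\log^+\bigl(1/|p_c^n(0)|\bigr)\,dA\leq Cn$ (this is precisely the content of Theorem~\ref{lm:L1loc} in the paper, up to the sign decomposition of $\log$). But the proposal stops exactly where the theorem actually lives: $(\ast)$ is not proved, and the plan you sketch for it (telescoping $\log|p_c^n(0)|$ and splitting the sublevel set by the ``responsible'' summand, with Cartan--Chebyshev estimates) does not contain the ingredients that make the linear-in-$n$ bound possible. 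Cartan-type estimates only see the degree of $p_c^n(0)$ in $c$, which is the $O(d^{n-1})$ loss you already identified; the telescoped terms carry coefficients $d^{n-1-k}$ and their singular sets are again dynamical hypersurfaces of exponential degree, so the splitting by itself does not beat the degree count. What is genuinely needed, and what the paper supplies, is twofold. First, a \emph{dynamical} a priori lower bound on $\partial\Mand_d$: by the Przytycki-type Lemma~\ref{lm:Przytycki}, if $c$ lies on the boundary of the Multibrot set (in particular on the boundary of any component of the interior), the critical point is not in an attracting basin and hence $|p_c^n(0)|\geq\kappa M^{-n}$, so $\bigl|\log|p_c^n(0)|\bigr|\leq C_1 n$ there; nothing purely algebraic or formal about the recursion rules out super-exponentially small values of $|p_c^n(0)|$ on $\partial\Mand_d$, and without this normal-families input you have no boundary control at all. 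Second, a way to convert that boundary bound into an $L^1$ bound \emph{inside} each hyperbolic component meeting $\Per(n)$, where $\log|p_c^n(0)|$ genuinely blows up at the center: the paper uses simplicity of the roots (Lemma~\ref{lm:multi}) so that $dd^c\log|p_c^n(0)|=\delta_{c_c}$ on each such component, and then the $L^1$ estimate of Theorem~\ref{tm:L1estimatedim1}, proved by a length--area (modulus) argument, which bounds $\|\log|f|\|_{L^1(\Omega)}$ by (boundary sup)$\times$Area$(\Omega)$ with no diameter information; on the components of the interior of $\Mand_d$ not meeting $\Per(n)$ one instead uses harmonicity and the maximum principle (cf.\ Lemma~\ref{lm:outside} and Theorem~\ref{lm:L1loc}).

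So the gap is concrete: you must either reproduce or replace (i) the Przytycki estimate giving $\log(1/|p_c^n(0)|)\leq C_1 n$ on $\partial\Mand_d$, and (ii) an estimate controlling $\int_\Omega\log(1/|p_c^n(0)|)$ on a simply connected hyperbolic component $\Omega$ in terms of the boundary bound and $\mathrm{Area}(\Omega)$ only (summing areas over the disjoint components then gives the global bound). As written, your ``splitting the sublevel set'' step has no mechanism to control how much area the deep sublevel sets $\{|p_c^n(0)|<e^{-t}\}$ occupy for $t$ up to order $d^n$, which is exactly what (i)+(ii) accomplish; until that is supplied, the proof of $(\ast)$, and hence of the theorem, is missing.
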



A classical interpolation argument immediately gives Favre and Rivera-Letelier's above result.  We shall also give a similar estimate for non-postcritically finite hyperbolic parameters in Section~\ref{sec:unicritical}.

~

\par

More recently, this subject has been intensively studied in the moduli space $\mathcal{P}_d$ for $d\geq3$. As the critical points are marked, i.e. can be followed holomorphically in the whole moduli space $\mathcal{P}_d$, the bifurcation current decomposes as $T_\bif=\sum_{P'(c)=0} T_{c}$ (see \S \ref{sec:defbif} for the definition of $T_{c}$). For $n>k\geq 0$, let
\[\Per_c(n,k):=\{[P]\in\mathcal{P}_d~\, ; \ P^n(c)=P^k(c)\}~.\]
Refining Levin's approach, Dujardin and Favre prove in~\cite{favredujardin} that for any sequence $k(n)$ satisfying $0\leq k(n)<n$, the sequence $d^{-n}[\Per_c(n,k(n))]$ converges to $T_c$ in the weak sense of currents on $\mathcal{P}_d$. Recently, Okuyama~\cite{okuyama:distrib} gave a simplified proof of their result in the case $k(n)=0$. 

Compared to the unicritical family, a significant difficulty comes from the fact that two distinct critical points can very well be, at some parameter, in the immediate Fatou component of the same attractive periodic point. To overcome that, for $n\geq1$ and $w\in\C$, we let
\[\Per^*(n,w):=\{[P]\in\mathcal{P}_d \, ; \ P \text{ has a cycle of multiplier } w \text{ and exact period } n\}~.\]
The set $\Per^*(n,w)$ happens to be a complex hypersurface of degree $(d-1)d_n\sim (d-1)d^n$, where $d_n\geq1$ is defined by induction by $d_1=1$ and $d^n=\sum_{k|n}d_k$ for $n\geq2$. It is known that, for a fixed $w\in\C$, the sequence $d^{-n}[\Per^*(n,w)]$ converges weakly to $T_\bif$ on the moduli space $\mathcal{P}_d$. The case $|w|\leq 1$ has been established by Bassanelli and Berteloot~\cite{BB2} using an approximation formula for the Lyapunov exponent (see also~\cite{BB3}). The more delicate case $|w|>1$ has been proved recently by the first author in~\cite{DistribTbif} (see also~\cite{multipliers} for the case of quadratic polynomials with changing multipliers). 

\par Building on arithmetic methods, Favre and the first author~\cite{favregauthier} proved that postcritically finite hyperbolic parameters with $(d-1)$ distinct super-attracting cycles (resp. strictly postcritically finite parameters with given combinatorics) equidistribute towards the bifurcation measure. The proof developped in that work is only qualitative, since there exists no effective version of Yuan's arithmetic equidistribution Theorem. It also raises the question to know whether the result is of purely arithmetic nature or not. We consider the present work as a continuation of \cite{favregauthier}.

\paragraph*{Statement of the main results} Our main goal here is twofold. First, we want to establish a \emph{quantitative} equidistribution theorem for postcritically finite hyperbolic parameters. Second, we aim at giving a simpler proof than the one of~\cite{favregauthier}, relying only on pluripotential theoretic and complex analytic arguments.
To our purposes, as in the recent works \cite{favredujardin,BB2,DistribTbif,favregauthier}, we shall use the following ``orbifold'' parametrization of the moduli space $\mathcal{P}_d$.
For $(c,a)=(c_1,\ldots,c_{d-2},a)\in\C^{d-1}$, we let
\[P_{c,a}(z):=\frac{1}{d}z^d+\sum_{j=2}^{d-1}(-1)^{d-j}\sigma_{d-j}(c)\frac{z^j}{j}+a^d~, \ z\in\C~,\]
where $\sigma_{\ell}(c)$ is the monic symmetric polynomial of degree $\ell$ in $(c_1,\ldots,c_{d-2})$. Observe that the canonical projection $(c,a)\in \C^{d-1}\mapsto\{P_{c,a}\}\in\mathcal{P}_d$ is a finite branched cover of degree $d(d-1)$ and that the critical points of $P_{c,a}$ are exactly $c_0,c_1,\ldots,c_{d-2}$ with the convention that $c_0:=0$ (see Section \ref{Sec:basics} for details). For an integer $n\in \N^*$, we let $\sigma(n)$ be the sum of its divisors $\sigma(n):=\sum_{k|n} k$. The function $\sigma$ is known to be bounded from above by $C n \log \log n $ for some constant $C>0$ independent of $n$. 

Our main result may be stated as follows.
\begin{Theorem}
Let $d\geq3$. Then there exists a constant $C>0$ depending only on $d$ such that for any $(d-1)$-tuple of pairwise distinct positive integers $\un=(n_0,\ldots,n_{d-2})$ with $n_0\geq2$ and every test function $\varphi\in\mathcal{C}^2_c(\C^{d-1})$, if $\mu_{\un}$ is the probability measure equidistributed on the set of parameters in $\C^{d-1}$ for which the critical point $c_i$ is periodic of exact period $n_i$ for all $i$, we have
\[\left|\int_{\C^{d-1}} \varphi\,\mu_{\un}-\int_{\C^{d-1}}\varphi\,\mu_\bif\right|\leq C\max_{1\leq j\leq d-1}\left(\frac{\sigma(n_j)}{d^{n_j}}\right)\|\varphi\|_{\mathcal{C}^2}.\]
\label{tm:centers}
\end{Theorem}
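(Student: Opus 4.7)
The plan is to represent both $\mu_\bif$ and $\mu_\un$ as iterated Monge-Amp\`ere products of explicit plurisubharmonic potentials, to compare those potentials quantitatively, and to combine a multilinear telescoping identity for the wedge product with integration by parts.

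For each critical point $c_i$, denote by $g_i(c,a) := G_{P_{c,a}}(c_i)$ the dynamical Green function of $P_{c,a}$ evaluated at $c_i$. By DeMarco's formula and Bedford-Taylor, $g_i$ is continuous psh on $\C^{d-1}$, $T_{c_i}=dd^c g_i$, and $\mu_\bif = dd^c g_0\wedge\cdots\wedge dd^c g_{d-2}$. For every $(n,i)$, set $p_{n,i}(c,a) := P_{c,a}^n(c_i)-c_i$, a polynomial in $(c,a)$ of degree $d^n$. Factor $p_{n,i} = \prod_{k|n}p^*_{k,i}$, with $\{p^*_{k,i}=0\}$ the exact-period-$k$ hypersurface for $c_i$, and set $N^*_{n,i} := \deg p^*_{n,i}=\sum_{k|n}\mu(n/k)d^k$ and $u^*_{n,i}:= \frac{1}{N^*_{n,i}}\log|p^*_{n,i}|$. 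Pairwise-distinctness of the $n_i$ and B\'ezout's theorem then yield
\[\mu_\un = dd^c u^*_{n_0,0}\wedge \cdots \wedge dd^c u^*_{n_{d-2},d-2}.\]

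The quantitative heart of the proof is the $L^1$ estimate $\|u^*_{n,i}-g_i\|_{L^1(K)} \leq C\sigma(n)/d^n$ on any fixed compact $K\subset \C^{d-1}$. I would first obtain the per-level bound $\|\tfrac{1}{d^k}\log|p_{k,i}| - g_i\|_{L^1(K)} \leq Ck/d^k$ by combining the classical escape-rate telescoping $|g_i - d^{-k}\log^+|P^k(c_i)||=O(d^{-k})$ with a Jensen-Mahler estimate on the mean values of $\log|p_{k,i}|$ on $K$ (the factor $k$ arising from the bound on coefficient sizes of the $k$-fold iterate of $P_{c,a}$). M\"obius inversion $\log|p^*_{n,i}|=\sum_{k|n}\mu(n/k)\log|p_{k,i}|$, together with $\sum_{k|n}\mu(n/k)d^k/N^*_{n,i} = 1$, gives
\[u^*_{n,i} - g_i = \sum_{k|n}\mu(n/k)\,\frac{d^k}{N^*_{n,i}}\,\big(\tfrac{1}{d^k}\log|p_{k,i}| - g_i\big),\]
and summing the per-level bound yields $\sum_{k|n}\frac{d^k}{N^*_{n,i}}\cdot\frac{Ck}{d^k} = C\sigma(n)/N^*_{n,i}$, which is of the announced order $\sigma(n)/d^n$.

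For $\varphi\in\mathcal{C}^2_c(\C^{d-1})$, the multilinear telescoping
\[\mu_\un - \mu_\bif = \sum_{i=0}^{d-2}\bigwedge_{j<i}dd^c u^*_{n_j,j}\wedge dd^c(u^*_{n_i,i}-g_i)\wedge \bigwedge_{j>i}dd^c g_j\]
paired with $\varphi$ and followed by two integrations by parts on each summand gives
\[\int_{\C^{d-1}}\varphi\,(\mu_\un - \mu_\bif) = \sum_{i=0}^{d-2}\int_{\C^{d-1}}(u^*_{n_i,i}-g_i)\,dd^c\varphi\wedge S_i,\]
where $S_i:=\bigwedge_{j<i}dd^c u^*_{n_j,j}\wedge\bigwedge_{j>i}dd^c g_j$ has mass on $\supp\varphi$ uniformly bounded in $\un$ by Chern-Levine-Nirenberg. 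Combined with $\|dd^c\varphi\|_\infty \leq C\|\varphi\|_{\mathcal{C}^2}$ and the $L^1$ estimate of the preceding paragraph, this gives the theorem.

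The main obstacle is the two-sided $L^1$ bound on $\tfrac{1}{d^k}\log|p_{k,i}| - g_i$: the pointwise upper bound is immediate from the escape-rate telescoping, but the matching lower bound fails pointwise on $\{p_{k,i}=0\}$ and must be recovered by integration, which requires precise control of the coefficient sizes of iterates of $P_{c,a}$. A secondary subtlety is rigorously justifying the integration by parts in presence of the logarithmic singularities of $u^*_{n,i}$: one must invoke Bedford-Taylor's theory of wedge products of currents with locally bounded potentials outside a pluripolar set, or else regularize via $\max(u^*_{n,i},-M)$ and pass to the limit via monotone convergence of Monge-Amp\`ere operators.
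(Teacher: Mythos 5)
Your reduction to the telescoping sum is the right first move, but the final step is where the argument genuinely breaks. After integrating by parts you must bound $\int_{\C^{d-1}}(u^*_{n_i,i}-g_i)\,dd^c\varphi\wedge S_i$, and the relevant reference measure here is the trace measure $\omega\wedge S_i$ of the current $S_i=\bigwedge_{j<i}dd^cu^*_{n_j,j}\wedge\bigwedge_{j>i}dd^cg_j$, not Lebesgue measure: the factors $dd^cu^*_{n_j,j}$ are (normalized) currents of integration on dynatomic hypersurfaces, so $\omega\wedge S_i$ is carried by an algebraic curve and is singular with respect to Lebesgue measure. Consequently your Lebesgue $L^1(K)$ estimate on $u^*_{n_i,i}-g_i$, combined with a Chern--Levine--Nirenberg mass bound, controls nothing in this pairing; and no $L^\infty$ bound can substitute for it, since $u^*_{n_i,i}$ has $-\infty$ singularities exactly at the postcritically finite parameters, which lie on the support of $\omega\wedge S_i$ (they are the very points charged by $\mu_{\un}$). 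This is precisely the difficulty the paper's proof is organized around: it establishes pointwise estimates of Przytycki type for $g_{n_j,j}-g_j$ valid where $g_j=G$ or on $\mathcal{C}_d\cap\supp T_j$, locates the failure set inside the hyperbolic components $\mathcal{H}_{\un}$, uses the Favre--Gauthier transversality theorem to see that the restriction of $\log|P_{n_j,j}|$ to the curve $S_j$ has Laplacian a sum of simple Dirac masses at the centers, and then applies the $L^1$ estimate for solutions of the Laplacian on disks in affine curves (the length--area argument) together with B\'ezout to convert this into an $L^1$ bound \emph{with respect to the measure on the curve}. Moreover, the naive telescoping with mixed $u^*/g$ factors has a support problem (the pointwise estimates are only available where $g_j=G$ or on $\supp T_j$), which the paper handles with the truncated potentials $\widetilde{T}_{n,q}$, the averaging over circles of radius $e^{-2C\sigma(n)}$, and an intermediate lemma; your decomposition would need an analogous device.

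Two further points are asserted rather than proved. First, the per-level bound $\|d^{-k}\log|P_{c,a}^k(c_i)-c_i|-g_i\|_{L^1(K)}\leq Ck/d^k$ does not follow from a ``Jensen--Mahler'' coefficient estimate: inside a hyperbolic component of period dividing $k$ the function $\log|P_{c,a}^k(c_i)-c_i|$ is of order $-d^k$ (doubly exponentially small values of $|P^k(c_i)-c_i|$), so the factor $k/d^k$ can only be obtained by trading the depth of these logarithmic wells against the \emph{area} of the components---this is exactly the content of the paper's curve $L^1$ theorem and the reason the authors stress replacing diameters of hyperbolic components by volumes; even in the unicritical toy case this step requires that theorem plus simplicity of the roots of $p_c^n(0)$. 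Second, identifying $\mu_{\un}$ with $\bigwedge_j dd^cu^*_{n_j,j}$ needs more than B\'ezout and distinctness of the $n_j$: B\'ezout counts with multiplicities, and you need the transversality theorem (Epstein/Favre--Gauthier, which is where the hypotheses $n_0\geq2$ and pairwise distinct periods enter) to know all local intersection multiplicities are one, so that the wedge product is the equidistributed probability measure.
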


\par The first ingredient of the proof is a (slight generalization) of a very general dynamical property established by Przytycki: if a critical point $c$ of a rational map $f$ does not lie in an attracting basin $f$, the points $c$ and $f^n(c)$ can not be to close (see Lemma~\ref{lm:Przytycki}). The idea to use Przytycki estimate in this context has been introduced by Okuyama~\cite{okuyama:distrib} (see also~\cite{okuyama:speed}) and constitutes the starting point of our work. Combined with known global properties of the family $(P_{c,a})_{(c,a)\in\C^{d-1}}$, this allows us to have precise pointwise estimates outside some specific ``bad'' hyperbolic components. The second important tool is a transversality result for critical periodic orbit relations proved in \cite{favregauthier} and relying on Epstein's transversality theory (see~\cite{epstein2}). 
 The last important ingredient we use is a $L^1$ estimate for specific solutions of the Laplacian in a bounded topological disk of an affine curve of $\C^{d-1}$, which proof crucially  relies on length-area estimates (see Theorem ~\ref{tm:L1estimatedim1}). This allows us to replace an estimate involving the \emph{diameter} of hyperbolic components with their \emph{volume}. This actually is a key step, since even in the quadratic family, estimating the diameter of hyperbolic components is a very delicate problem related to the famous hyperbolicity conjecture.
 
Nevertheless, notice that, in the context of the unicritical family, the equation $p_c^n(0)=0$ is known to have simple roots so we do not need to exclude parameters with a periodic critical point of period \emph{dividing} $n$. As a consequence, we do not use transversality statements \`a la Epstein. Hence, we will start by the proof of Theorem~\ref{tm:Mandelbrot} which is simpler and more efficient than in the general case. We may regard this as a model for the general case. 

~

Following the strategy of the proof of \cite[Theorem 3]{favregauthier}, we can deduce from Theorem \ref{tm:centers} a speed of convergence for the measure equidistributed on the (finite) set of parameters admitting $(d-1)$ distinct attracting cycles of given respective multipliers $w_0,\ldots,w_{d-2}\in\D$ and of given mutually distinct periods towards $\mu_\bif$. Let us be more precise and pick a $(d-1)$-tuple $\un:=(n_0,\ldots,n_{d-2})$ of mutually distinct positive integers and $\uw:=(w_0,\ldots,w_{d-2})\in\C^{d-1}$. When the set $\bigcap_{i=0}^{d-2}\Per^*(n_{i},w_i)$ is finite, let
\[\mu_{\un,\uw}:=\frac{1}{(d-1)!\prod_j d_{n_{j}}}\bigwedge_{j=0}^{d-2}[\Per^*(n_{j},w_j)]~.\]
Notice that $\mu_{\un,\uw}$ is a probability measure and that, when $\uw\in\D^{d-1}$, the measure $\mu_{\un,\uw}$ is exactly the measure equidistributed on the set of parameters in $\C^{d-1}$ having $(d-1)$ cycles of respective exact period $n_0,\ldots,n_{d-2}$ and multipliers $w_0,\ldots,w_{d-2}$.

The precise result we prove may be stated as follows.

\begin{Theorem}
Pick $d\geq3$. Then there exists a constant $C > 0$ such that for every $\uw=(w_0,\ldots,w_{d-2})\in\D^{d-1}$ and every $(d-1)$-tuple of pairwise distinct positive integers $\un=(n_0,\ldots,n_{d-2})$ with $n_0\geq2$, if $\mu_{\un,\uw}$ is as above, we have
\[\left|\int_{\C^{d-1}} \varphi\,\mu_{\un,\uw}-\int_{\C^{d-1}}\varphi\,\mu_\bif\right|\leq C\left(\max_{0\leq j\leq d-2}\left\{\frac{\sigma(n_j)}{d^{n_j}},\frac{-1}{d^{n_j}\log|w_j|}\right\}\right)^{\frac{1}{2}}\|\varphi\|_{\mathcal{C}^1},\]
 for any test function $\varphi\in\mathcal{C}^1_c(\C^{d-1})$.\label{cor:centers}
\end{Theorem}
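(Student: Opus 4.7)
The plan is to first prove a $\mathcal{C}^2$-version of the announced inequality,
\[
|\mu_{\un,\uw}(\varphi)-\mu_\bif(\varphi)|\leq C\varepsilon_{\un,\uw}\|\varphi\|_{\mathcal{C}^2},\qquad \varepsilon_{\un,\uw}:=\max_{0\leq j\leq d-2}\left\{\frac{\sigma(n_j)}{d^{n_j}},\frac{-1}{d^{n_j}\log|w_j|}\right\},
\]
and then to deduce the claimed $\mathcal{C}^1$-estimate via a standard mollification argument which costs a square root. By the triangle inequality and Theorem \ref{tm:centers}, the $\mathcal{C}^2$-bound reduces to controlling $\mu_{\un,\uw}-\mu_\un$.

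For this step I would follow the strategy of \cite[Theorem 3]{favregauthier}. Let $p_n(\lambda,w)=\prod_\gamma(w-w_\gamma(\lambda))$ be the natural defining polynomial of $\Per^*(n,w)$ in $\mathcal{OP}_d$, where $\gamma$ ranges over the cycles of exact period $n$ of $P_\lambda$ and $w_\gamma(\lambda)$ are their multipliers, and set
\[
\Phi_{n,w}(\lambda):=\frac{1}{d^n}\log\left|\frac{p_n(\lambda,w)}{p_n(\lambda,0)}\right|=\frac{1}{d^n}\sum_\gamma\log\left|1-\frac{w}{w_\gamma(\lambda)}\right|,
\]
so that $[\Per^*(n,w)]-[\Per^*(n,0)]=d^n\,dd^c\Phi_{n,w}$. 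A telescoping of the $(d-1)$-fold wedge product in the definitions of $\mu_{\un,\uw}$ and $\mu_\un$, followed by integration by parts against $\varphi$, yields
\[
\mu_{\un,\uw}(\varphi)-\mu_\un(\varphi)=\sum_{k=0}^{d-2}\int\Phi_{n_k,w_k}\,dd^c\varphi\wedge S_k,
\]
where each $S_k$ is a positive closed $(d-2,d-2)$-current of mass locally bounded uniformly in $\un,\uw$, built from the normalized hypersurfaces $d^{-n_j}[\Per^*(n_j,w_j)]$ for $j<k$ and $d^{-n_j}[\Per^*(n_j,0)]$ for $j>k$. The core analytic claim is then
\[
\left|\int\Phi_{n,w}\,dd^c\varphi\wedge S\right|\leq\frac{C\|\varphi\|_{\mathcal{C}^2}}{d^n|\log|w||}.
\]
Using Fatou's bound that at most $d-1$ multipliers $w_\gamma(\lambda)$ satisfy $|w_\gamma(\lambda)|\leq 1$ for every $\lambda$, I would split $\Phi_{n,w}$ into a repelling part and an attracting part. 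The repelling part is handled by the Lyapunov-type estimates produced by Przytycki's Lemma \ref{lm:Przytycki} (already central in the proof of Theorem \ref{tm:centers}), so this contribution is pointwise negligible. The at most $d-1$ attracting terms are then treated as the bad hyperbolic components in the proof of Theorem \ref{tm:centers}: Epstein transversality yields the simplicity of the relevant level sets, and the length-area $L^1$-estimate of Theorem \ref{tm:L1estimatedim1} replaces bounds involving the diameter of hyperbolic components by bounds involving their volume. The center level sets $\{w_\gamma=0\}$ are replaced by their deformations $\{w_\gamma=w\}$, and the hyperbolic distance $\sim|\log|w||$ from the former to the latter in $\D$ is what produces the factor $|\log|w||^{-1}$.

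To pass from the $\mathcal{C}^2$-estimate to the announced $\mathcal{C}^1$-bound, pick a smooth mollifier $\chi_r$ of scale $r>0$ on $\C^{d-1}\simeq\R^{2(d-1)}$ with $\int\chi_r=1$. Then $\|\varphi\ast\chi_r-\varphi\|_\infty\leq r\|\varphi\|_{\mathcal{C}^1}$ and, because $D^2(\varphi\ast\chi_r)=D\varphi\ast D\chi_r$, also $\|\varphi\ast\chi_r\|_{\mathcal{C}^2}\leq C\|\varphi\|_{\mathcal{C}^1}/r$. Since $\mu_{\un,\uw}$ and $\mu_\bif$ are probability measures, a three-term triangle inequality yields
\[
|\mu_{\un,\uw}(\varphi)-\mu_\bif(\varphi)|\leq 2r\|\varphi\|_{\mathcal{C}^1}+\frac{C\varepsilon_{\un,\uw}}{r}\|\varphi\|_{\mathcal{C}^1},
\]
and optimizing at $r=\sqrt{\varepsilon_{\un,\uw}}$ delivers the stated bound. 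The main obstacle is the intermediate pluripotential estimate for $\int\Phi_{n,w}\,dd^c\varphi\wedge S$: the combination of Przytycki's Lemma, Epstein transversality, and the length-area inequality has to be carried out in a form uniform in $w$, with the $|\log|w||^{-1}$ factor arising from the hyperbolic-disk geometry of the level sets $\{w_\gamma=w\}$ near their centers.
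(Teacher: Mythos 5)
Your overall architecture --- compare $\mu_{\un,\uw}$ with the superattracting measure, use Theorem~\ref{tm:centers}, and regularize once at the end to trade a $\mathcal{C}^2$ bound for a $\mathcal{C}^1$ bound with a square root --- is reasonable, and the telescoping identity with the potentials $\Phi_{n_k,w_k}$ is formally correct. The genuine gap is your core claim $\left|\int\Phi_{n,w}\,dd^c\varphi\wedge S\right|\leq C\|\varphi\|_{\mathcal{C}^2}/(d^n|\log|w||)$, which is asserted but not proved, and the tools you invoke do not deliver it. The function $\Phi_{n,w}=d^{-n}\sum_\gamma\log|1-w/w_\gamma|$ has to be controlled in $L^1$ along the whole curve $S$, hence also \emph{outside} the hyperbolic components, where it is governed by the multipliers of the roughly $d^n/n$ repelling cycles of period $n$. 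Przytycki's Lemma~\ref{lm:Przytycki} bounds $d_{\p^1}(f_\lambda^n(c(\lambda)),c(\lambda))$ from below, i.e.\ it controls the dynatomic potentials $\log|P_{n,j}|$; it says nothing about $\sum_\gamma\log|1-w/w_\gamma|$, and a uniform (or $L^1$) control of that sum is essentially a quantitative version of the equidistribution of $\Per^*(n,w)$, which this paper neither proves nor needs. Moreover the factor $|\log|w||^{-1}$ cannot come out of Theorem~\ref{tm:L1estimatedim1} as you suggest: that estimate yields a factor $\max\{\|\log|f|\|_{L^\infty(\partial\Omega)},1\}\geq 1$ times the area (and, as stated, it requires $dd^c\log|f|=\delta_{z_0}$, a single simple zero, whereas your attracting term is a difference of two logarithms); ``hyperbolic distance between the level sets $\{w_\gamma=0\}$ and $\{w_\gamma=w\}$'' is a heuristic, not an estimate.

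For contrast, the paper's proof of this step uses no potential estimate at all: it changes one coordinate $w_j$ at a time, pairs the unique point of $X_j=\bigcap_k\Per^*(n_k,\uw[j]_k)$ in each hyperbolic component with the unique point of $X_{j+1}$ in the same component via the explicit holomorphic disk $t\mapsto\rho_{c,a}^{-1}(w_0,\ldots,w_{j-1},tw_j,0,\ldots,0)$ defined on $\D(0,|w_j|^{-1/2})$, and then applies Cauchy--Schwarz, the Briend--Duval length--area Lemma~\ref{lm:BriendDuval} (the modulus of the annulus equals $\frac{-1}{4\pi}\log|w_j|$, which is exactly where the $|\log|w_j||^{-1}$ comes from) and B\'ezout to bound the total area of the disks by $\deg(C_j)$. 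That argument intrinsically produces a $\mathcal{C}^1$ estimate with the square root; interpolation is used only once, to compare $\mu_{\un,\underline{0}}$ with $\mu_\bif$ via Theorem~\ref{tm:centers} (this is also how the unicritical Theorem~\ref{cor:Mandelbrot} is proved). To salvage your route you would have to genuinely establish the $L^1$ bound for $d^{-n}\log|p_n(\cdot,w)/p_n(\cdot,0)|$ along the curves, repelling part included, which is a substantially harder statement than anything available in the paper.
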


\par Combining Theorem~\ref{cor:centers} with techniques from pluripotential theory (see e.g. \cite{ThelinVigny1} and \cite{dinhsibony2}), we can actually prove that for all $(d-1)$-tuples of multipliers $\uw:=(w_0,\ldots,w_{d-2})$ lying outside a pluripolar set of $\C^{d-1}$, the measure $\mu_{\un,\uw}$ equidistributed on parameters having $(d-1)$ cycles of respective multipliers $w_0,\ldots,w_{d-2}$ converges towards the bifurcation measure, as soon as the periods of the given cycles grow fast enough:
\begin{Theorem}\label{tm:principal}
Pick any sequence $\un_k=(n_{0,k},\ldots,n_{d-2,k})$ of $(d-1)$-tuples of pairwise distinct positive integers such that the series $\sum_k\max_j\{n_{j,k}^{-1}\}$ converges. Then there exists a pluripolar set of $\mathcal{E}\subset \C^{d-1} $ such that for any $\uw=(w_0,\ldots,w_{d-2})\in\C^{d-1}\setminus\mathcal{E}$, the set $\bigcap_{i=0}^{d-2}\Per^*(n_{i,k},w_i)$ is finite for any $k$ and the sequence $(\mu_{\un_k,\uw})_k$ converges to $\mu_\bif$ in the sense of measures.
\end{Theorem}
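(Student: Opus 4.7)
The plan is to upgrade the quantitative convergence on the polydisk $\D^{d-1}$ provided by Theorem~\ref{cor:centers} to pointwise convergence on the complement of a pluripolar subset of $\C^{d-1}$, via pluripotential-theoretic techniques in the spirit of \cite{ThelinVigny1,dinhsibony2}. The starting observation is that each hypersurface $\Per^*(n,w)\subset\C^{d-1}$ is the vanishing locus of a polynomial $p_n(w,\lambda)$, holomorphic in $(w,\lambda)\in\C\times\C^{d-1}$, so that
\[G_n(w,\lambda):=\frac{1}{d^n}\log|p_n(w,\lambda)|\]
is jointly plurisubharmonic on $\C\times\C^{d-1}$ and satisfies $dd^c_\lambda G_n(w,\cdot)=d^{-n}[\Per^*(n,w)]$. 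Since $d^{-n}[\Per^*(n,w)]\to T_\bif$ as $n\to\infty$, the potentials $G_n(w,\cdot)$ converge to the psh potential $G_\bif$ of $T_\bif$ in $L^1_{loc}(\C^{d-1})$, with an intrinsic rate of the order of $1/n$ that matches the summability hypothesis on $\max_j n_{j,k}^{-1}$.

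For $\varphi\in\mathcal{C}^2_c(\C^{d-1})$, I write, up to the lower-order factor $\prod_j d^{n_{j,k}}/d_{n_{j,k}}\to 1$ coming from the normalization,
\[\langle\mu_{\un_k,\uw},\varphi\rangle=\frac{1+o(1)}{(d-1)!}\int_{\C^{d-1}}\varphi\,\bigwedge_{j=0}^{d-2}dd^c_\lambda G_{n_{j,k}}(w_j,\lambda),\]
and telescope via iterated Stokes into a finite sum of integrals of the form $\int_{\C^{d-1}}\bigl(G_{n_{j,k}}(w_j,\cdot)-G_\bif\bigr)\,dd^c\varphi\wedge S_{j,k}$, where $S_{j,k}$ wedges factors $dd^c G_{n_{i,k}}(w_i,\cdot)$ for $i<j$ and $T_\bif$ for $i>j$. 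Theorem~\ref{cor:centers} is then used as the base case: it provides explicit pointwise control of these differences for every $\uw\in\D^{d-1}$, giving in particular a priori $L^1_{loc}$ bounds on compact subsets of $\D^{d-1}$ summable in $k$.

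For each fixed $\lambda$ the map $\uw\mapsto G_{n_{j,k}}(w_j,\lambda)$ is plurisubharmonic on $\C^{d-1}$, and a Borel--Cantelli-type pluripotential principle (see~\cite{ThelinVigny1}) ensures that summable $L^1_{loc}$ defects of a sequence of psh functions from its psh limit force pointwise convergence outside a pluripolar set. Feeding the summability hypothesis $\sum_k\max_j n_{j,k}^{-1}<\infty$ into this principle yields, for each $\varphi$, a pluripolar exceptional set $\mathcal{E}_\varphi$ outside which $\langle\mu_{\un_k,\uw},\varphi\rangle\to\langle\mu_\bif,\varphi\rangle$. Taking the union of the $\mathcal{E}_\varphi$ over a countable dense family of test functions $\varphi$, together with the countable union of proper analytic (hence pluripolar) sets on which $\bigcap_i\Per^*(n_{i,k},w_i)$ fails to be finite for some $k$, yields the desired pluripolar set $\mathcal{E}$. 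The most delicate technical point I anticipate is transferring $L^1_{loc}$ control of the slice potentials $G_{n_{j,k}}(w_j,\cdot)$ into quantitative control of the mixed wedge products $\bigwedge_j dd^c G_{n_{j,k}}(w_j,\cdot)$: wedge products of currents are not jointly continuous under $L^1$ convergence of potentials, and handling this step requires combining the quantitative speed of Theorem~\ref{cor:centers} on $\D^{d-1}$ with a suitable monotone/dominated continuity principle for psh families along the telescoped expansion.
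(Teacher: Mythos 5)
Your overall strategy --- upgrade the quantitative estimate of Theorem~\ref{cor:centers} on the polydisk to convergence off a pluripolar set of multipliers, then conclude via a countable dense family of test functions and the fact that the non-finiteness locus of $\bigcap_i\Per^*(n_{i,k},w_i)$ is a countable union of analytic sets --- is the right one, but there is a genuine gap at exactly the point you flag and do not resolve. The pairing $\uw\mapsto\langle\mu_{\un_k,\uw},\varphi\rangle$ involves a wedge of $d-1$ currents in the parameter variable $(c,a)$, and $L^1_{loc}$ convergence of the slice potentials $G_{n}(w,\cdot)$ gives no control of such wedge products; an appeal to ``a suitable monotone/dominated continuity principle'' is not a proof, and no such general principle is available here. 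Moreover, your Borel--Cantelli-type psh principle is applied to the functions $\uw\mapsto G_{n_{j,k}}(w_j,\lambda)$, psh in $\uw$ for fixed $\lambda$, which is not the object whose convergence is needed; and since Theorem~\ref{cor:centers} only provides control for $\uw$ in (a compact subset of) $\D^{d-1}$, such an argument could at best exclude a pluripolar subset of the polydisk, whereas the statement concerns all of $\C^{d-1}$, including multipliers of modulus larger than $1$ where no quantitative input exists. Finally, the ``intrinsic rate of order $1/n$'' you invoke for the convergence of $G_n(w,\cdot)$ to the potential of $T_\bif$ in $L^1_{loc}$ of the parameter space is unsubstantiated (for $|w|>1$ even the qualitative convergence $d^{-n}[\Per^*(n,w)]\to T_\bif$ is delicate), and in fact the hypothesis $\sum_k\max_j n_{j,k}^{-1}<\infty$ does not correspond to any rate in the parameter variable.

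What is actually needed --- and what the paper does --- is to make the multiplier variable itself the pluripotential variable. One regards $\Phi^\varphi_{\un}(\uw):=\langle\mu_{\un,\uw},\varphi\rangle$ as a slice, over $\uw$, of the current $\Pi_1^*(\varphi)\wedge\frac{1}{(d-1)!\prod_jd_{n_j}}\bigwedge_i[\mathcal{P}(n_i,i+1)]$ on $\p^{d-1}\times(\p^1)^{d-1}$, where $\mathcal{P}(n,j)=\{(c,a,\uw)\,;\ p_n(c,a,w_{j-1})=0\}$. Since $\deg_w p_n=d_n/n$, a purely cohomological computation bounds the masses of the positive closed currents appearing in $dd^c_{\uw}\Phi^\varphi_\un$ by $C\sum_j n_j^{-1}$; combined with the $L^1(\nu)$ bound, for a smooth PB measure $\nu$ supported in $\D(0,1/2)^{d-1}$, furnished by Theorem~\ref{cor:centers}, this shows that $\Phi^\varphi_{\un}-\langle\mu_\bif,\varphi\rangle$ is DSH in $\uw$ with DSH norm $O(\max_j n_j^{-1})$ (Lemma~\ref{lm:DSH}). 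The equivalence of DSH norms relative to arbitrary PB measures then converts the summability hypothesis into $\nu'$-a.e.\ convergence for \emph{every} PB measure $\nu'$, and a set which is null for every PB measure is pluripolar; this is precisely the mechanism that transports the estimate from the polydisk to all of $\C^{d-1}$ off a pluripolar set, and it is absent from your argument. In particular the $1/n$ in the hypothesis reflects the degree in the multiplier variable, $\deg_w p_n/d_n=1/n$, not a convergence rate of parameter-space potentials, and no telescoping of wedge products in the parameter variable is required once this DSH structure is in place.
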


As an obvious corollary, we can deduce that if we only assume $\min_j\{n_{j,k}\}\to\infty$, then, for $\uw$ outside a pluripolar set, up to extraction $(\mu_{\un_k,\uw})_k$ converges to $\mu_\bif$ in the sense of measures. Here is another immediate and interesting consequence of Theorem~\ref{tm:principal}.

\begin{Corollary}\label{cor:neutres}
Pick any sequence $\un_k=(n_{0,k},\ldots,n_{d-2,k})$ of $(d-1)$-tuples of pairwise distinct positive integers such that the series $\sum_k \max_j(n_{j,k}^{-1})$ converges. Then, for almost any $\Theta=(\theta_0,\ldots,\theta_{d-2})\in \R^{d-1}$, if $\uw(\Theta)=(e^{2i\pi\theta_0},\ldots,e^{2i\pi\theta_{d-2}})$, the sequence $(\mu_{\un_k,\uw(\Theta)})_k$ converges to $\mu_\bif$ in the sense of measures.
\end{Corollary}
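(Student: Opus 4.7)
The plan is to derive Corollary~\ref{cor:neutres} directly from Theorem~\ref{tm:principal} by intersecting the pluripolar exceptional set $\mathcal{E}\subset\C^{d-1}$ provided by that theorem with the compact real torus
\[\mathbb{T}^{d-1}:=\{(z_0,\ldots,z_{d-2})\in\C^{d-1}\,:\,|z_j|=1,\ j=0,\ldots,d-2\}.\]
The map $\Theta=(\theta_0,\ldots,\theta_{d-2})\mapsto\uw(\Theta)=(e^{2i\pi\theta_0},\ldots,e^{2i\pi\theta_{d-2}})$ is a smooth covering $\R^{d-1}\to\mathbb{T}^{d-1}$ that pushes normalized Lebesgue measure on $(\R/\Z)^{d-1}$ forward to Haar measure on $\mathbb{T}^{d-1}$. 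Consequently, it will suffice to show that $\mathcal{E}\cap\mathbb{T}^{d-1}$ has Haar measure zero: for every $\Theta$ outside the Lebesgue-negligible preimage $\uw^{-1}(\mathcal{E}\cap\mathbb{T}^{d-1})$, Theorem~\ref{tm:principal} yields $\mu_{\un_k,\uw(\Theta)}\to\mu_\bif$, which is the claim.

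The only nontrivial input is thus the following classical pluripotential-theoretic fact, which I would recall for completeness: \emph{any pluripolar subset of $\C^n$ meets the standard unit torus $\mathbb{T}^n\subset\C^n$ (a maximal totally real submanifold) in a set of Haar measure zero.} Writing $\mathcal{E}\subset\{u=-\infty\}$ for some plurisubharmonic $u$ on $\C^n$ with $u\not\equiv-\infty$, I would argue by induction on $n$. For $n=1$, the sub-mean-value inequality forces a subharmonic function not identically $-\infty$ to lie in $L^1$ on every circle centered at $0$, so $\{\theta\,:\,u(e^{i\theta})=-\infty\}$ has one-dimensional Lebesgue measure zero. For the inductive step, the slice function $z_n\mapsto u(z_1,\ldots,z_n)$ is subharmonic for every fixed $(z_1,\ldots,z_{n-1})$, and the set
\[A:=\{(z_1,\ldots,z_{n-1})\in\C^{n-1}\,:\,u(z_1,\ldots,z_{n-1},\cdot)\equiv-\infty\}\]
is pluripolar in $\C^{n-1}$. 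By the inductive hypothesis, $A\cap\mathbb{T}^{n-1}$ is Haar-negligible, and off this negligible set the $n=1$ case bounds the exceptional set in the last circle. Fubini on $\mathbb{T}^n=\mathbb{T}^{n-1}\times\mathbb{T}$ concludes.

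Beyond this lemma, which can also be invoked directly from the literature (e.g. Sadullaev or Bedford--Taylor on totally real restrictions of psh functions), no dynamical argument is needed: Theorem~\ref{tm:principal} is used as a black box. The main (and essentially only) obstacle is justifying the pluripotential statement above; the measure-theoretic reduction from $\Theta\in\R^{d-1}$ to $\uw(\Theta)\in\mathbb{T}^{d-1}$ is routine once the map is recognized as a smooth local diffeomorphism.
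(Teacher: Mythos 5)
Your reduction is correct and follows the same overall scheme as the paper: both proofs use Theorem~\ref{tm:principal} as a black box and boil the Corollary down to the fact that the pluripolar exceptional set $\mathcal{E}$ meets the unit torus $(\mathbb{S}^1)^{d-1}$ in a set of zero Haar measure, after which the passage from Haar-a.e.\ $\uw$ to Lebesgue-a.e.\ $\Theta$ is immediate. Where you diverge is in how this key fact is justified. The paper gets it in one line from pluripotential theory: Haar measure on $(\mathbb{S}^1)^{d-1}$ is the Monge--Amp\`ere measure $(dd^cg)^{d-1}$ of the continuous pluricomplex Green function $g=\log^+\max_i|z_i|$ of the polydisk, and by Bedford--Taylor theory Monge--Amp\`ere measures of locally bounded psh functions put no mass on pluripolar sets. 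You instead prove the fact by hand, writing $\mathcal{E}\subset\{u=-\infty\}$ and running an induction with Fubini on $\mathbb{T}^n=\mathbb{T}^{n-1}\times\mathbb{T}$, using in dimension one that circular means of a subharmonic function $u\not\equiv-\infty$ are finite (monotonicity of means plus local integrability), so $u\in L^1$ of the circle. Your argument is more elementary and self-contained, at the price of one assertion that deserves a word of justification: that the slice-degeneracy set $A=\{z'\,;\,u(z',\cdot)\equiv-\infty\}$ is pluripolar in $\C^{n-1}$. This is standard --- for instance $A\subset\{\tilde v=-\infty\}$ where $\tilde v(z'):=\int_{\D}u(z',w)\,dA(w)$ is psh and, by local integrability of $u$, not identically $-\infty$ --- but it should be stated or referenced (Lelong, or Sadullaev's results on totally real restrictions, as you note). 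Measurability of $\{u=-\infty\}\cap\mathbb{T}^n$ for Fubini is clear since $u$ is upper semicontinuous. With these small points made explicit, your proof is complete; the paper's route is shorter but leans on Bedford--Taylor, yours only on one-variable subharmonicity.
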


\par Notice that Bassanelli and Berteloot \cite{BB3} proved a weaker version of Corollary~\ref{cor:neutres}: they prove that the average measures $\int_{]0,1[^{d-1}}\mu_{\un_k,\uw(\Theta)}dm(\Theta)$ converge weakly to the bifurcation measure. Contrary to ours, their proof also works in any codimension.

~

\par We view these results as parametric analogues of important dynamical phenomena. Indeed, Theorem~\ref{tm:centers} is an analogue of the equidistribution of repelling periodic points of a holomorphic endomorphism $F$ of $\p^k$ towards its maximal entropy measure $\mu_F$, and Theorem~\ref{tm:principal} is an analogue of the equidistribution of preimages of a generic points, again towards the measure $\mu_F$  (see \cite{lyubich:equi,briendduval}).

\paragraph*{Perspectives}

The questions we discuss here may be addressed in a more general setting. A first natural generalization is the case when critical points can have the same period. In that case, the transversality theory \`a la Epstein fails at parameters admitting multiple critical points and we a priori have no control of the multiplicity of intersection at those parameters.

A second natural question is concerned with the case of the moduli space of degree $d$ rational maps. Even in the case of quadratic rational maps which is much better understood that the general case, important difficulties occur. The main problem comes from the fact that, contrary to the case of polynomials, the support of the bifurcation measure is \emph{not} compact in the moduli space of quadratic rational maps and that the collection of relatively compact hyperbolic components cluster at infinity (see \cite{Mod2}). 

We shall study both cases in future works.

On the other hand, in a more recent preprint (\cite{GV2}), instead of focusing on the distribution of \emph{hyperbolic} postcritically finite parameters, we study the distribution of \emph{Misiurewicz} parameters (as is also done in~\cite{favregauthier}). In this preprint, we study this problem using this time combinatorial tools developped by Kiwi~\cite{kiwi-portrait} and Dujardin-Favre~\cite{favredujardin}, enlightening slightly different, though related, phenomena.

\paragraph*{Structure of the article} Section~\ref{sec:background} is devoted to needed material. In Section~\ref{sec:premilinaires}, we establish two preliminary results: Przytycki distance estimate and the $L^1$ estimate for solutions of the Laplacian. We then present the proof of Theorem~\ref{tm:Mandelbrot} and its corollaries in Section~\ref{sec:unicritical}. The initial estimates relying on Przytycki Lemma are established in Section~\ref{sec:outside}. Section~\ref{sec:principale} is dedicated to the proof of the main Theorem~\ref{tm:centers} and Section~\ref{sec:pluripolar} to proving Theorems~\ref{cor:centers} and~\ref{tm:principal}.
Finally, in Section~\ref{sec:preimages}, we investigate other approximation phenomena. We try here to understand the distribution of maps for which the critical points are sent to some prescribed target (and not necessarily themselves). We are especially interested in a theorem of Dujardin \cite{Dujardin2012} that proves the convergence outside some pluripolar set. We give here some convergence estimates and show that in some cases, the pluripolar set can be described explicitly.

\paragraph*{Acknowledgment} The research of both authors is partially supported by the ANR project Lambda ANR-13-BS01-0002. We would like to thank Vincent Guedj for very helpful discussions concerning $L^1$ estimates for solutions of the Laplacian and Fran\c{c}ois Berteloot and Charles Favre for useful comments on preliminary versions.

\section{Background material}\label{sec:background}
In this section, we want to recall briefly background material on bifurcation currents and on classical complex analytic tools we will rely on in the whole paper.

\subsection{Holomorphic families with marked critical points}\label{sec:defbif}
Let us recall classic facts concerning holomorphic families of rational maps.

A \emph{holomorphic family} $(f_\lambda)_{\lambda\in\Lambda}$ of degree $d\geq2$ rational maps parametrized by $\Lambda$ is a holomorphic map
\[f:\Lambda\times\p^1\longrightarrow\p^1\]
such that the map $f_\lambda:= f(\lambda,\cdot):\p^1\rightarrow\p^1$ is a degree $d$ rational map, or equivalently if the map $f:\lambda\in\Lambda\longmapsto f_\lambda\in\rat_d$ is holomorphic. 

\begin{definition} We say that a holomorphic family $(f_\lambda)_{\lambda\in\Lambda}$ is \emph{with a marked critical point} if there exists a holomorphic map $c:\Lambda\rightarrow\p^1$ such that $f_\lambda'(c(\lambda))=0$ for all $\lambda\in\Lambda$.
\end{definition}
We say that a marked critical point $c$ is \emph{passive} at $\lambda_0\in\Lambda$ if there exists a neighborhood $U\subset\Lambda$ of $\lambda_0$ such that the sequence $F_n$ of holomorphic maps defined by $F_{n}(\lambda):=f_\lambda^n(c(\lambda))$ is a normal family on $U$. We say that $c$ is \emph{active} at $\lambda_0$ if it is not passive at $\lambda_0$. The \emph{activity locus} of $c$ is the set of parameters $\lambda_0\in\Lambda$ such that $c$ is active at $\lambda_0$.

Let $\omega_{\p^1}$ be the Fubini-Study form of $\p^1$ normalized so that $\int_{\p^1}\omega_{\p^1}=1$.

\begin{theorem}[Dujardin-Favre]
The sequence $d^{-n}(F_n)^*\omega_{\p^1}$ converges in the weak sense of currents to a closed positive $(1,1)$-current $T_c$ on $\Lambda$ which is supported by the activity locus of $c$. 
\end{theorem}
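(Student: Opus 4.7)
The plan is to realise $T_c$ as $dd^c$ of a plurisubharmonic potential built from the dynamical Green function of the family, and to deduce weak convergence of currents from uniform convergence of these potentials. I would work locally, fixing $\lambda_0\in\Lambda$ and a small ball $U\subset\Lambda$ around it. After shrinking $U$, choose a holomorphic homogeneous lift $F\colon U\times\C^2\to\C^2$ of $(f_\lambda)$, with each $F_\lambda$ a nonzero homogeneous polynomial map of degree $d$, and a holomorphic lift $\tilde c\colon U\to\C^2\setminus\{0\}$ of the marked critical point. A standard estimate of the form
$$\bigl|\log\|F_\lambda(z)\|-d\log\|z\|\bigr|\leq C,$$
uniform in $\lambda\in U$ and in $z$ ranging over a compact subset of $\C^2\setminus\{0\}$, gives by a telescoping sum that
$$u_n(\lambda,z):=d^{-n}\log\|F_\lambda^n(z)\|$$
is Cauchy on compact subsets of $U\times(\C^2\setminus\{0\})$; its uniform limit $G(\lambda,z)$ is continuous, plurisubharmonic in $(\lambda,z)$ and satisfies $G(\lambda,F_\lambda(z))=d\,G(\lambda,z)$.

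Next I would set $g_c(\lambda):=G(\lambda,\tilde c(\lambda))$, a plurisubharmonic function on $U$. Since $\pi^*\omega_{\p^1}=dd^c\log\|\cdot\|$ on $\C^2\setminus\{0\}$ and $F_n(\lambda)=\pi\bigl(F_\lambda^n(\tilde c(\lambda))\bigr)$, the chain rule yields
$$F_n^*\omega_{\p^1}=dd^c_\lambda\log\|F_\lambda^n(\tilde c(\lambda))\|,$$
hence $d^{-n}F_n^*\omega_{\p^1}=dd^c\bigl(u_n(\lambda,\tilde c(\lambda))\bigr)$. Uniform convergence of the potentials on compact subsets of $U$ then gives weak convergence of the currents to $T_c:=dd^c g_c$ on $U$. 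Any other choice of holomorphic lifts $F$ or $\tilde c$ alters $u_n$ by a pluriharmonic function of $\lambda$ (coming from $\log|g|$ for a nonvanishing holomorphic $g$), so the locally defined $dd^c g_c$ glue to a globally defined closed positive $(1,1)$-current $T_c$ on $\Lambda$ and the weak convergence is global.

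To see that $\supp T_c$ is contained in the activity locus, suppose $c$ is passive on a connected open set $V\subset\Lambda$. Normality of the family $(F_n)_{n\geq 0}$ on $V$ implies, via a Marty-type criterion, that the pull-backs $F_n^*\omega_{\p^1}$ have locally uniformly bounded mass on $V$; dividing by $d^n\to\infty$ forces $d^{-n}F_n^*\omega_{\p^1}\to 0$ on $V$, so $T_c$ vanishes there. Equivalently, on $V$ the potentials $u_n(\lambda,\tilde c(\lambda))$ tend to a pluriharmonic function (after a suitable normalisation of $\tilde c$), so $dd^c g_c=0$ on $V$.

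The main technical point is to obtain the uniform estimate on $\log\|F_\lambda(z)\|-d\log\|z\|$ with good dependence in $\lambda$, which is needed to control $u_n$ along the holomorphic graph $\{(\lambda,\tilde c(\lambda))\}$; this reduces to a routine compactness argument using that each $F_\lambda$ is a nonvanishing homogeneous polynomial depending holomorphically on $\lambda$. Once this is in hand, everything else is the standard potential-theoretic machinery: uniform convergence of plurisubharmonic potentials implies weak convergence of their $dd^c$, and the passivity criterion is handled by Marty's normality theorem.
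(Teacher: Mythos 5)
Your argument is correct and is essentially the same as the one behind this statement, which the paper does not reprove but quotes from Dujardin--Favre: one constructs the fibered Green function $G(\lambda,z)=\lim d^{-n}\log\|F_\lambda^n(z)\|$ from a local nondegenerate homogeneous lift, notes $d^{-n}(F_n)^*\omega_{\p^1}=dd^c\bigl(d^{-n}\log\|F_\lambda^n(\tilde c(\lambda))\|\bigr)$, and passes to the limit in the potentials (indeed the paper records exactly this quantitative form, $d^{-n}(F_n)^*\omega_{\p^1}=T_c+d^{-n}dd^cu_n$ with $u_n$ locally bounded), with vanishing on the passivity locus obtained from normality. The only cosmetic point is that your basic estimate $\bigl|\log\|F_\lambda(z)\|-d\log\|z\|\bigr|\leq C$ should be stated for all $z\neq0$ (it follows from the unit sphere by homogeneity), since the telescoping is applied along orbits that need not stay in a fixed compact set.
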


More precisely, there exists a locally uniformly bounded sequence of continuous function $u_n:\Lambda\times\p^1\rightarrow\R$ such that $\frac{1}{d^n}(F_n)^*\omega_{\p^1}=T_c+\frac{1}{d^n}dd^cu_n$, see e.g. \cite[Proposition-Definition 3.1]{favredujardin} or \cite{DeMarco1}. It is also known that $T_c\wedge T_c=0$ (see \cite[Theorem 6.1]{Article1}).

Let us also recall that, when a holomorphic family is with $2d-2$ marked critical points $c_1,\ldots,c_{2d-2}$, the current $T_\bif:=\sum_{i=1}^{2d-2}T_{c_i}$ is supported by the bifurcation locus in the sense of Ma\~n\'e, Sad and Sullivan (see \cite{DeMarco2}).

\begin{definition}
We define the \emph{bifurcation measure} of a family $(f_\lambda)_{\lambda\in\Lambda}$ as 
$$\mu_\bif:=\frac{1}{m!}T_\bif^m~, \ m=\dim\Lambda~.$$
\end{definition}
This measure detects, in a certain sense, the strongest bifurcations which occur in $\Lambda$. 
Finally, when $(f_\lambda)_{\lambda\in\Lambda}$ is a holomorphic family of polynomials with $d-1$ marked critical point $c_1,\ldots,c_{d-1}:\Lambda\rightarrow\C$, we let
\[g_\lambda(z):=\lim_{n\to\infty}d^{-n}\log^+|f_\lambda^n(z)|~,\]
for $\lambda\in\Lambda$ and $z\in\C$. The current $T_i$ is then given by $T_{c_i}=dd^cg_\lambda(c_i(\lambda))$ (see e.g. \cite{favredujardin}). We also can remark that it actually can be considered as equipped with $2d-2$ marked critical points letting $c_d\equiv\cdots\equiv c_{2d-2}\equiv\infty$ and that $T_i=0$ for all $d\leq i\leq 2d-2$.

\subsection{Polynomials with a specific periodic point}\label{Sec:dynatomic}
For the material of the present section, we refer to \cite[\S 4.1]{Silverman} and to \cite{BB3,BB2,Milnor3} (see also~\cite[\S 6]{favregauthier}). We follow the notations of \cite{favregauthier}.

We let $(f_\lambda)_{\lambda\in\Lambda}$ be a holomorphic family of degree $d$ polynomials parametrized by a quasi-projective variety $\Lambda$. For any $n\geq1$, the $n$-th \emph{dynatomic polynomial} of $f_\lambda$ is defined as
\[\Phi_n^*(\lambda,z):=\prod_{k|n}\left(f_\lambda^k(z)-z\right)^{\mu(n/k)}~,\]
where $\mu$ stands for the Moebius function.
This defines a polynomial map $\Phi_n:\Lambda\times \C\longrightarrow\C$ satisfying $\Phi_n(\lambda,z)=0$ if and only if
\begin{itemize}
\item either $z$ is periodic under iteration of $f_\lambda$ with $(f_\lambda^n)'(z)\neq1$ and its exact period is $n$,
\item or $z$ is periodic under iteration of $f_\lambda$ with $(f_\lambda^n)'(z)=1$ and its exact period is $k|n$ and $(f^k)'(z)$ is a primitive $n/k$-th root of unity.
\end{itemize}

When $(f_\lambda)_{\lambda\in\Lambda}$is endowed with $(d-1)$ marked critical point $c_0,\ldots,c_{d-2}:\Lambda\to\C$, we may apply this construction to those marked critical points $c_j$. We let
\[P_{n,j}(\lambda):=\Phi_n^*(\lambda,c_j(\lambda))~, \ \lambda\in\Lambda~.\]
By the above, we have 
\begin{lemma}\label{lm:dynat}
Pick $m\geq1$, $0\leq j\leq d-2$ and $\lambda\in\Lambda$. Then $P_{m,j}(\lambda)=0$ if and only if $c_j(\lambda)$ is periodic under iteration of $f_\lambda$ with exact period $m$.
\end{lemma}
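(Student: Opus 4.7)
The plan is to deduce the lemma directly from the two-case characterization of the zero set of the dynatomic polynomial $\Phi_m^*(\lambda,z)$ recalled just above its statement. Setting $z = c_j(\lambda)$ turns $\Phi_m^*(\lambda,z) = 0$ into $P_{m,j}(\lambda) = 0$, so the content of the lemma is to show that, for critical initial conditions, only the first of the two alternatives can occur. The key input is the trivial but decisive observation that the multiplier of a periodic orbit containing a critical point is automatically zero.

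More precisely, suppose $P_{m,j}(\lambda) = 0$. Then we are in one of the two cases stated before the lemma: either $c_j(\lambda)$ is periodic of exact period $m$ with $(f_\lambda^m)'(c_j(\lambda)) \neq 1$, or $c_j(\lambda)$ is periodic of exact period $k \mid m$ with $k < m$, with $(f_\lambda^m)'(c_j(\lambda)) = 1$ and $(f_\lambda^k)'(c_j(\lambda))$ a primitive $(m/k)$-th root of unity. I would now rule out the second case: since $c_j$ is a marked critical point, $f_\lambda'(c_j(\lambda)) = 0$, so if $c_j(\lambda)$ belongs to a periodic cycle of length $k$, the chain rule gives
\[
(f_\lambda^k)'(c_j(\lambda)) = \prod_{i=0}^{k-1} f_\lambda'\bigl(f_\lambda^i(c_j(\lambda))\bigr) = 0,
\]
because the factor at $i = 0$ already vanishes. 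In particular this multiplier cannot be a primitive root of unity, eliminating the second alternative. Hence $P_{m,j}(\lambda) = 0$ forces $c_j(\lambda)$ to be periodic of exact period $m$.

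For the converse, if $c_j(\lambda)$ is periodic under $f_\lambda$ with exact period $m$, then by the same chain rule computation with $k = m$ we get $(f_\lambda^m)'(c_j(\lambda)) = 0 \neq 1$, so the pair $(\lambda, c_j(\lambda))$ lies in the first alternative of the zero set of $\Phi_m^*$, giving $P_{m,j}(\lambda) = 0$. There is no real obstacle here beyond carefully invoking the dichotomy for zeros of $\Phi_m^*$; the only thing one must observe is that the degenerate case (multiplier equal to $1$ with smaller underlying period) is automatically vacuous for critical initial conditions, which is what makes the characterization of $P_{m,j}$ cleaner than that of $\Phi_m^*$ itself.
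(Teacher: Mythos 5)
Your proof is correct and follows exactly the route the paper intends: the paper gives no explicit argument (it just says ``By the above, we have'' after recalling the dichotomy for zeros of $\Phi_m^*$), and your writeup simply makes explicit the key observation that a cycle through a critical point has multiplier $0$, so the root-of-unity alternative is vacuous for $z=c_j(\lambda)$. Nothing further is needed.
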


We also can describe the set of parameters admitting a cycle of given period an multiplier. For $n\geq1$, set
\[p_n(\lambda,w):=\left(\mathrm{Res}_z(\Phi_n^*(\lambda,z),(f_\lambda^n)'(z)-w\right)^{1/n}~, \ (\lambda,w)\in\Lambda\times\C~.\]
This defines a polynomial $p_n:\Lambda\times\C\longrightarrow\C$. Again, we find

\begin{lemma}\label{lm:pernw}
Pick $n\geq1$ and $w\in\C$. Then $p_n(\lambda,w)=0$ if and only if one of the following occurs:
\begin{itemize}
\item if $w\neq1$, $f_\lambda$ has a cycle of exact period $n$ and multiplier $w$,
\item if $w=1$, there exists $k|n$ such that $f_\lambda$ has a cycle of exact period $k$ and multiplier $\rho$ a primitive $n/k$-root of unity.
\end{itemize}
\end{lemma}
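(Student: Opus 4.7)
The plan is to unwind the definition of $p_n$ and combine it with the characterization of the zero set of $\Phi_n^*$ that has just been recalled, through the standard fact that $\mathrm{Res}_z(A,B)$ vanishes at a given parameter if and only if the two polynomials $A$ and $B$ share a root in $\C$ at that parameter. Since $p_n(\lambda,w)^n = \mathrm{Res}_z(\Phi_n^*(\lambda,z),(f_\lambda^n)'(z)-w)$, the condition $p_n(\lambda,w)=0$ translates into the existence of some $z_0\in\C$ satisfying both $\Phi_n^*(\lambda,z_0)=0$ and $(f_\lambda^n)'(z_0)=w$. As a preliminary, I would record that $p_n$ genuinely is a polynomial in $(\lambda,w)$ and not just an algebraic function: on each cycle appearing in the factorization of $\Phi_n^*$, the multiplier $(f_\lambda^n)'$ is constant by the chain rule, so each such cycle contributes exactly $n$ identical factors to the resultant, which is therefore globally an $n$-th power. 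This fact is classical and may be quoted from \cite{Silverman}.

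Granting this, I would then perform a direct case analysis on $w$, using the dichotomy stated in the paragraph preceding the lemma: a zero $z_0$ of $\Phi_n^*(\lambda,\cdot)$ is either of type (I), of exact period $n$ with $(f_\lambda^n)'(z_0)\neq 1$, or of type (II), of exact period $k\mid n$ with $(f_\lambda^n)'(z_0)=1$ and $(f_\lambda^k)'(z_0)$ a primitive $(n/k)$-th root of unity. If $w\neq 1$, the constraint $(f_\lambda^n)'(z_0)=w$ immediately excludes (II) and forces type (I); the common-root condition then reads ``$z_0$ has exact period $n$ and multiplier $w$'', which is equivalent to saying that $f_\lambda$ carries a cycle of exact period $n$ and multiplier $w$. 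Symmetrically, if $w=1$ the constraint rules out (I) and forces (II), which is exactly the second alternative in the statement.

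For the converse direction in each case, given a cycle of $f_\lambda$ of the prescribed type, any point of it is a simultaneous zero of $\Phi_n^*(\lambda,\cdot)$ and of $(f_\lambda^n)'(\cdot)-w$, hence forces the resultant, and therefore $p_n(\lambda,w)$, to vanish. The whole argument is thus a mechanical combination of the dictionary underlying Lemma~\ref{lm:dynat} with the definition of the resultant, and I do not expect any serious obstacle. The only point genuinely requiring attention is the preliminary verification that $p_n$ is an honest polynomial in $(\lambda,w)$; once this is granted, the lemma becomes a purely formal consequence of the zero set description of $\Phi_n^*$.
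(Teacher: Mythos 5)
Your argument is correct and is essentially the proof the paper has in mind: the lemma is stated there as an immediate consequence of the definition of $p_n$ via the resultant together with the description of the zero set of $\Phi_n^*$, exactly the dictionary you use, with the polynomiality of $p_n$ quoted from \cite{Silverman}. The only imprecision is your parenthetical justification that the resultant is an $n$-th power (cycles of exact period $k<n$ contribute $k$, not $n$, points, so the correct count is more delicate), but since you defer that classical fact to \cite{Silverman} and the vanishing equivalence does not depend on it, this does not affect the proof.
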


\subsection{A specific family}

Recall that the \emph{moduli space} $\mathcal{P}_d$ of degree $d$ polynomials is the space of affine conjugacy classes of degree $d$ polynomials with $d-1$ marked critical points. We define a finite branched cover of $\C^{d-1}\to\mathcal{P}_d$ as follows. For $c=(c_1,\ldots,c_{d-2})\in\C^{d-2}$ and $a\in\C$, let
\[P_{c,a}(z):=\frac{1}{d}z^d+\sum_{j=2}^{d-1}(-1)^{d-j}\frac{\sigma_{d-j}(c)}{j}z^j+a^d~, \ z\in\C~,\]
where $\sigma_k(c)$ is the monic elementary degree $k$ symmetric polynomial in the $c_i$'s. This family is known to be a finite branched cover (see e.g. \cite[\S 5]{favredujardin}). Remark also that the critical points of $P_{c,a}$ are exactly $c_0,\ldots,c_{d-2}$, where we set $c_0:=0$, and that they depend algebraically on $(c,a)\in\C^{d-1}$.

~

We define a continuous psh function $G:\C^{d-1}\to\R_+$ by setting
$$G(c,a):=\max_{0\leq j\leq d-2}g_{c,a}(c_j)~, \ (c,a)\in\C^{d-1}.$$
It is known that the \emph{connectedness locus} 
\[\mathcal{C}_d:=\{(c,a)\in\C^{d-1}\, ; \ \J_{c,a} \ \text{is connected}\}\]
is compact and satisfies $\mathcal{C}_d=\{G=0\}$, where $\J_{c,a}:=\partial \{g_{c,a}=0\}$ is the Julia set of $P_{c,a}$ (see \cite{BH}). Moreover, the bifurcation measure, in this actual family, coincides with the Monge-Amp\`ere mass of $G$, i.e.
$$\mu_\bif=(dd^cG)^{d-1}$$
as probability measures on $\C^{d-1}$. It is also known that
$$G(c,a)=\log^+\max\{|c|,|a|\}+O(1)~,$$
where we set $|c|:=\max_{1\leq j\leq d-2}|c_j|$ , and that the function $G$ is the pluricomplex Green function of $\mathcal{C}_d$. In particular, $\mu_\bif=(dd^cG)^{d-1}$ is supported by the Shilov boundary $\partial_S\mathcal{C}_d$ of $\mathcal{C}_d$ (see \cite[\S 6]{favredujardin}). In particular, the estimates of~\cite{favredujardin} give

\begin{lemma}\label{ineq:green}
There exists a constant $C>0$ independent of $(c,a)$ such that for any $(c,a)\in\C^{d-1}$, any $z\in\C$ and any $n$,
\begin{eqnarray*}
\left|\frac{1}{d^{n}}\log^+|P_{c,a}^{n}(z)|- g_{c,a}(z)\right|\leq \frac{d^{-n+1}}{(d-1)}\cdot(\log^+\max\{|c|,|a|\}+C)~.
\end{eqnarray*}
\end{lemma}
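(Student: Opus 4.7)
The plan is to reduce to the case $n = 0$ by means of the functional equation $g_{c,a}(z) = d^{-n} g_{c,a}(P_{c,a}^n(z))$: this identity immediately implies
\[\bigl|g_{c,a}(z) - d^{-n}\log^+|P_{c,a}^n(z)|\bigr| = d^{-n}\bigl|g_{c,a}(P_{c,a}^n(z)) - \log^+|P_{c,a}^n(z)|\bigr|,\]
so the lemma is equivalent to the uniform estimate $\bigl|g_{c,a}(w) - \log^+|w|\bigr| \leq \log^+\max\{|c|,|a|\} + C$ valid for every $w \in \C$, with the same constant $C$.

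To establish this reduced estimate, I would set $M := \max\{|c|, |a|\}$ and split into two regimes. For $|w| \geq R_0(1+M)$, with $R_0$ an escape radius depending only on $d$, the explicit formula for $P_{c,a}$ combined with the uniform bound $|\sigma_\ell(c)| \leq \binom{d-2}{\ell}|c|^\ell$ yields $P_{c,a}(w) = \tfrac{1}{d}w^d\bigl(1 + O(M/|w|)\bigr)$; in particular $\tfrac{1}{2d}|w|^d \leq |P_{c,a}(w)| \leq \tfrac{2}{d}|w|^d$, so iterating this estimate and passing to the limit in $d^{-n}\log|P_{c,a}^n(w)|$ gives $|g_{c,a}(w) - \log|w|| \leq \log(2d)/(d-1)$, a constant depending only on $d$.

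For $|w| \leq R_0(1+M)$, one has $\log^+|w| \leq \log^+M + \log(2R_0)$ automatically, and it remains to show $g_{c,a}(w) \leq \log^+M + O(1)$. If $w$ belongs to the filled Julia set $K_{c,a}$ then $g_{c,a}(w) = 0$ and the bound is trivial. Otherwise, let $k_0 \geq 1$ be the first integer with $|P_{c,a}^{k_0}(w)| \geq R_0(1+M)$; combining the crude estimate $|P_{c,a}^{k_0}(w)| \leq A_d M^d$ (derived from $|P_{c,a}(u)| \leq A_d(|u|^d + M^d)$ by Young's inequality applied to each monomial $|c|^{d-j}|u|^j$) with the bound $g_{c,a}(P_{c,a}^{k_0}(w)) \leq \log|P_{c,a}^{k_0}(w)| + C_d$ from the escape regime, one obtains
\[g_{c,a}(w) = d^{-k_0}\, g_{c,a}\bigl(P_{c,a}^{k_0}(w)\bigr) \leq d^{-(k_0-1)}\log^+M + O(d^{-k_0}) \leq \log^+M + O(1),\]
which closes the case.

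The main difficulty is securing the escape radius $R_0 = R_0(d)$ independently of $(c,a)$ and verifying that every constant in the reduced estimate depends only on $d$; both hinge on the uniform combinatorial bound $|\sigma_\ell(c)|/|c|^\ell \leq \binom{d-2}{\ell}$ for the elementary symmetric polynomials, which converts polynomial dependence on $|c|$ into powers of $M$ and enables the clean separation between the bounded and the escape regimes in the analysis of $P_{c,a}$. Once the reduction to $n=0$ is in place, the functional equation absorbs all the telescoping for free and the multiplicative constants on $\log^+M$ that would otherwise appear in a direct term-by-term argument are entirely avoided.
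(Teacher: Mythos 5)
Your argument is correct. The paper itself disposes of this lemma by citation: it observes that the function $\hat g$ appearing in the proof of Proposition-Definition 3.1 of Dujardin--Favre already satisfies the uniform bound $|\hat{g}((c,a),z)|\leq\log^+\max\{|c|,|a|\}+C$, and the lemma follows at once. Your reduced estimate $\bigl|g_{c,a}(w)-\log^+|w|\bigr|\leq\log^+\max\{|c|,|a|\}+C$ is exactly that imported inequality (after your reduction via $g_{c,a}\circ P_{c,a}=d\,g_{c,a}$, which is also how the cited statement propagates to all $n$), so the two proofs rest on the same bound; the difference is that you re-derive it from scratch, using a uniform escape radius $R_0(1+\max\{|c|,|a|\})$ coming from $|\sigma_\ell(c)|\leq\binom{d-2}{\ell}|c|^\ell$, forward invariance of the escape region, a telescoping sum there, and a first-escape-time argument in the bounded region. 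This buys a self-contained proof, at the cost of a page of routine estimates that the paper outsources. One small imprecision to fix: the intermediate claim $|P_{c,a}^{k_0}(w)|\leq A_d M^d$ is false for small $M$ (take $c=a=0$); what your own derivation actually gives, and what the displayed inequality that follows really uses, is $|P_{c,a}^{k_0}(w)|\leq A_d(1+M)^d$, whose logarithm is $d\log^+ M+O(1)$, so the slip is harmless but should be corrected.
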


\begin{proof}
First, let us show that 
\[\left|\frac{1}{d}\log^+|P_{c,a}(z)|-\log^+|z|\right|\leq \log^+\max\{|c|,|a|\}+C\]
for some constant $C\geq0$ depending only on $d$. As observed in the proof of \cite[Lemma 6.4]{favredujardin}, there exists $\tilde C\geq1$ such that $|P_{c,a}(z)|\leq \tilde{C}\max\{|c|,|a|,|z|\}^d$, hence
\[\frac{1}{d}\log^+|P_{c,a}(z)|\leq \log^+|z|+\log^+\max\{|c|,|a|\}+\log\tilde C\]
 for all $(c,a,z)\in\C^d$. On the other hand, if $|z|\geq d\cdot\max\{|c|,|a|,1\}$, one clearly has $|P_{c,a}(z)|\geq C'|z|^d$ for some $C'>0$ depending only on $d$ and
\[\frac{1}{d}\log^+|P_{c,a}(z)|\geq \log^+|z|-\log^+\max\{|c|,|a|\}+\log C'.\]
Finally, if $|z|\leq d\cdot\max\{|c|,|a|,1\}$, we directly find
\[\frac{1}{d}\log^+|P_{c,a}(z)|\geq 0\geq \log^+|z|-\log^+\max\{|c|,|a|\}-\log d,\]
and the conclusion follows.

Now, an immediate induction gives
\[\left|\frac{1}{d^{n+k}}\log^+|P_{c,a}^{n+k}(z)|-\frac{1}{d^n}\log^+|P_{c,a}^n(z)|\right|\leq \frac{1}{d^n}\left(\log^+\max\{|c|,|a|\}+C\right)\sum_{j=0}^{k-1}d^{-j}\]
for all $n\geq 1$ and all $k\geq1$ and the conclusion follows making $k\to\infty$.
\end{proof}


\subsection{Complex analytic tools}

We will denote in what follows $d_{\p^1}$ the classical spherical distance on $\p^1$, normalized so that $\p^1$ has diameter $1$. For a $C^1$ selfmap $f$, we denote $f^\#$ the spherical derivative of $f$:
\[\forall z\in \p^1, \   f^\#(z):= \lim_{d_{\p^1}(z,y) \to 0} \frac{d_{\p^1}(f(z),f(y))}{d_{\p^1}(z,y)}.  \]

Recall that if $A$ is an annulus and if $A$ is conformally equivalent to $A'=\{z\in\C \, ; \ r<|z|<R\}$ with $0< r<R< +\infty$, the \emph{modulus} of A is the same as the modulus of $A'$:
\[\textup{mod}(A)=\textup{mod}(A')=\frac{1}{2\pi}\log\left(\frac{R}{r}\right)~.\]
We will rely on the following classical estimate (see~\cite[Appendix]{briendduval}).

\begin{lemma}[Briend-Duval]\label{lm:BriendDuval}
For any $k\geq1$, there exists a constant $\tau_k>0$ depending only on $k$ such that for any holomorphic disks $D_1\Subset D_2\Subset\C^k$,  and any hermitian metric $\alpha$ on $\C^k$,
$$(\textup{diam}_\alpha(D_1))^2\leq \tau\cdot\frac{\textup{Area}_\alpha(D_2)}{\min(1,\textup{mod}(A))}~,$$
where $A$ is the annulus $D_2\setminus \overline{D_1}$ and areas and distances are computed with respect to $\alpha$.
\end{lemma}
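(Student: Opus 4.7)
The plan is to find a short \emph{separating loop} in the annulus $A$ by a length-area argument, then use the maximum principle for holomorphic maps to bound the diameter of $D_1$ by the length of this loop. Since both sides of the claimed inequality are homogeneous of degree two in $\alpha$, one reduces (at the cost of a factor depending only on $k$ and on the bi-Lipschitz comparison with $\alpha$ on a bounded region) to the case where $\alpha$ is the standard Hermitian metric on $\C^k$. Uniformize $D_2$ by a biholomorphism $\phi\colon \D\to D_2$ with $\phi(0)\in D_1$, and set $K:=\phi^{-1}(D_1)\Subset\D$ and $\rho(z):=\|\phi'(z)\|_\alpha$. Then $\textup{Area}_\alpha(D_2)=\int_\D\rho^2\,dA$, and by conformal invariance $m:=\textup{mod}(A)=\textup{mod}(\D\setminus \overline K)$.

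Next, uniformize $\D\setminus\overline K$ onto the round annulus $\{e^{-2\pi m}<|w|<1\}$ and pass to cylindrical coordinates $w=e^{u+i\theta}$, $u\in(-2\pi m,0)$, $\theta\in\R/2\pi\Z$. The pulled-back conformal metric takes the form $\mu(u,\theta)^2(du^2+d\theta^2)$, with $\int\!\!\int\mu^2\,du\,d\theta\leq \textup{Area}_\alpha(D_2)$ by conformal invariance of area. Each circle $\{u=\textup{const.}\}$ pushes forward to a simple closed curve $\gamma_u\subset A$ separating $D_1$ from $\partial D_2$, of $\alpha$-length $L(u)=\int_0^{2\pi}\mu(u,\theta)\,d\theta$. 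Cauchy-Schwarz gives $L(u)^2\leq 2\pi\int_0^{2\pi}\mu^2(u,\theta)\,d\theta$, and integrating in $u$ yields
$$\int_{-2\pi m}^0 L(u)^2\,du\leq 2\pi\,\textup{Area}_\alpha(D_2).$$
By the mean-value principle some $u^*\in(-2\pi m,0)$ satisfies $L(u^*)^2\leq \textup{Area}_\alpha(D_2)/m$; denote the corresponding loop by $\gamma^*$.

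Finally, $\gamma^*$ bounds a compact topological disk $D_1^*\subset D_2$ containing $D_1$, parametrized by $\phi$ restricted to a closed topological disk in $\D$ whose boundary is $\phi^{-1}(\gamma^*)$. For each real linear functional $\ell$ on $\C^k$, $\ell\circ\phi$ has harmonic real part on this parameter disk, so the maximum principle forces $D_1^*$ to lie in the Euclidean convex hull of $\gamma^*$. Combined with the elementary observation that $\textup{diam}_\alpha(\gamma^*)\leq L(\gamma^*)/2$ for any closed curve, we obtain
$$\textup{diam}_\alpha(D_1)\leq \textup{diam}_\alpha(D_1^*)\leq \textup{diam}_\alpha(\gamma^*)\leq \tfrac{1}{2}L(\gamma^*).$$
Squaring gives $\textup{diam}_\alpha(D_1)^2\leq \textup{Area}_\alpha(D_2)/(4m)\leq \textup{Area}_\alpha(D_2)/(4\min(1,m))$, which is the desired inequality. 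The most delicate point is the very last step: in an arbitrary conformal metric a short separating loop can perfectly well enclose a region of large diameter (think of a thin spiral), and it is precisely the holomorphic -- equivalently minimal-surface -- nature of $D_1^*$, via the convex-hull consequence of the maximum principle, that rules this out and turns the length-area estimate into a genuine diameter bound.
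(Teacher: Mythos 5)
The paper does not actually prove this lemma; it is quoted from the appendix of Briend--Duval, and your argument is essentially the classical proof given there: a length--area estimate in the uniformized annulus produces a short separating loop $\gamma^*$, and the maximum principle applied to (real parts of) linear functionals composed with the parametrization shows the enclosed holomorphic subdisk lies in the convex hull of $\gamma^*$, so its diameter is at most $\tfrac12 L(\gamma^*)$; all the steps check out, and you even get the slightly stronger bound with $\textup{mod}(A)$ in place of $\min(1,\textup{mod}(A))$. One imprecision to fix: your opening reduction ``at the cost of a factor depending on the bi-Lipschitz comparison with $\alpha$'' would make the constant depend on $\alpha$, contradicting the uniformity asserted in the statement; the intended reading is that $\alpha$ is a Hermitian inner product (constant coefficients), which a $\C$-linear change of coordinates carries exactly to the standard metric, preserving holomorphic disks, moduli, $\alpha$-areas and $\alpha$-diameters, so the reduction is exact and no extra factor appears --- note that for a genuinely variable Hermitian metric your convex-hull step would not apply as stated, so this exact linear reduction is the correct formulation.
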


We also rely on the following classical integration by part formula, which can be stated as follows (see \cite[Formula 3.1 page 144]{Demailly}).

\begin{lemma}
Let $\Omega\Subset \Omega'\subset\C^k$ be bounded open sets. Assume that $\Omega$ has smooth boundary. Let $u,v$ be psh functions on $\Omega'$ and let $T$ be a closed positive $(k-1,k-1)$-current on $\Omega'$ such that $dd^cu\wedge T$ and $dd^cv\wedge T$ are well-defined. Then
\[\int_\Omega (vdd^cu-udd^cv)\wedge T=\int_{\partial \Omega}(vd^cu-ud^cv)\wedge T~.\]\label{lm:Stokes}
\end{lemma}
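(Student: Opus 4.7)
The plan is to reduce to the smooth case by an algebraic cancellation together with the classical Stokes' theorem, then extend to psh functions by standard regularization.

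First I would verify the identity when $u$ and $v$ are smooth. Since $T$ is a closed current, the Leibniz rule gives
\[d\bigl((v\,d^cu-u\,d^cv)\wedge T\bigr) = (v\,dd^cu-u\,dd^cv)\wedge T + (dv\wedge d^cu-du\wedge d^cv)\wedge T.\]
Decomposing $d=\partial+\bar\partial$ and $d^c$ as a multiple of $\bar\partial-\partial$, a direct computation shows that
\[dv\wedge d^cu-du\wedge d^cv \ \in\ \mathrm{span}\bigl\{\partial u\wedge\partial v,\ \bar\partial u\wedge\bar\partial v\bigr\},\]
so this difference is a sum of a $(2,0)$-form and a $(0,2)$-form. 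Since $T$ has bidegree $(k-1,k-1)$ on the $k$-complex-dimensional ambient space, both $\partial u\wedge\partial v\wedge T$ and $\bar\partial u\wedge\bar\partial v\wedge T$ vanish for purely dimensional reasons. Hence $d\bigl((v\,d^cu-u\,d^cv)\wedge T\bigr)=(v\,dd^cu-u\,dd^cv)\wedge T$, and applying the classical Stokes formula on the smooth-boundary domain $\Omega$ yields the required identity in the smooth case.

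To handle general psh $u,v$, I would regularize by convolution. Choose $\Omega\Subset\Omega''\Subset\Omega'$ and a standard mollifier $\rho_\epsilon$, and set $u_\epsilon:=u*\rho_\epsilon$ and $v_\epsilon:=v*\rho_\epsilon$ on $\Omega''$ for $\epsilon$ small enough. These are smooth psh functions decreasing to $u$ and $v$. Applying the smooth case to $(u_\epsilon,v_\epsilon)$ gives the identity at each $\epsilon$, and one then lets $\epsilon\to 0$. On the interior, the convergence of $v_\epsilon\,dd^cu_\epsilon\wedge T$ to $v\,dd^cu\wedge T$ (and the symmetric statement) follows from the Bedford--Taylor theory of intersection of closed positive currents, since the hypothesis precisely guarantees that $dd^cu\wedge T$ and $dd^cv\wedge T$ are well defined.

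The main obstacle, as usual in such arguments, is the passage to the limit in the boundary term: the forms $d^cu_\epsilon$ do not converge in a strong topology on $\partial\Omega$, only in a distributional sense. Overcoming this uses the smoothness of $\partial\Omega$ together with the fact that $u$ and $v$ are locally bounded near the compact set $\partial\Omega\subset\Omega'$, which allows one to control $\int_{\partial\Omega}(v_\epsilon\,d^cu_\epsilon-u_\epsilon\,d^cv_\epsilon)\wedge T$ by testing against smooth forms on the hypersurface $\partial\Omega$ and invoking dominated convergence. This is precisely why both hypotheses, $\Omega\Subset\Omega'$ and the smoothness of $\partial\Omega$, are essential to the statement.
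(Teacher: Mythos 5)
Your algebraic reduction in the smooth case is correct: $dv\wedge d^cu-du\wedge d^cv$ is indeed of bidegree $(2,0)+(0,2)$, hence vanishes against the $(k-1,k-1)$-current $T$, and the identity for smooth $u,v$ then comes from an integration by parts. Note, however, that already at this stage ``classical Stokes'' does not literally apply, since $(v\,d^cu-u\,d^cv)\wedge T$ is only a current of order $0$: one must smooth the characteristic function of $\Omega$ (and know that the relevant masses do not charge $\partial\Omega$, or that $T$ is nice near $\partial\Omega$) to make sense of and justify the boundary term. For the record, the paper does not prove this lemma at all: it quotes it from Demailly (Formula (3.1), p.~144), whose proof is precisely the smooth computation you give followed by a careful limiting procedure, so your overall route is the expected one.

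The genuine gap is the passage to the limit $\epsilon\to0$, on both sides. On the interior side, Bedford--Taylor continuity gives $v_\epsilon\,dd^cu_\epsilon\wedge T\to v\,dd^cu\wedge T$ only in the weak sense, i.e. against test functions compactly supported in $\Omega$ --- and even this requires local boundedness of $u,v$ (or membership in a suitable domain of definition); the hypothesis ``$dd^cu\wedge T$ is well-defined'' by itself does not yield it, and in the paper's application the lemma is used with the unbounded function $\log|h|$. Moreover weak convergence of measures does not give convergence of the total integrals $\int_\Omega$, since mass may concentrate on or leak through $\partial\Omega$. On the boundary side your argument is only a gesture: the currents $d^cu_\epsilon\wedge T$ are defined on open sets and have no canonical restriction to the real hypersurface $\partial\Omega$, so ``testing against smooth forms on $\partial\Omega$ and invoking dominated convergence'' has neither a pointwise limit nor a dominating function available; and the local boundedness of $u,v$ near $\partial\Omega$ that you invoke is not among the hypotheses (psh functions are only locally bounded above and may take the value $-\infty$ on $\partial\Omega$). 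As it stands you have proved the smooth case and named, but not resolved, the difficulty in the general case; to complete the argument you would need either additional hypotheses (e.g. $u,v$ bounded or continuous near $\partial\Omega$ and $\partial\Omega$ not charged by the relevant masses) together with a careful cutoff argument of the form $\langle dS,\chi_\delta\rangle=-\langle S,d\chi_\delta\rangle$, or simply to do as the paper does and invoke Demailly's formula directly.
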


\section{Preliminary results}\label{sec:premilinaires}

In this section, we establish the two main technical estimates we will rely on. The first one is of dynamical nature and follows very closely a classical result of Przytycki. The second is of more geometric nature and might be of independent interest.

\subsection{Local dynamical estimates}\label{sec:localestimates}
We shall use the following estimate in a crucial way. Though we will need it only in the case of families polynomials, we state and prove the estimate for general familes of rational maps for sake of completeness. The proof follows very closely that of \cite[Lemma 1]{Przytycki3}. The idea to use this result for proving equidistribution phenomena in parameters spaces first appeared in the recent work \cite{okuyama:distrib} of Okuyama.

\begin{lemma}\label{lm:Przytycki}
Let $(f_\lambda)_{\lambda\in\Lambda}$ be a holomorphic family of degree $d$ rational maps and let $c:\Lambda\to\p^1$ be a marked critical point of $(f_\lambda)_{\lambda \in \Lambda}$. Assume that $c(\lambda)$ does not lie persistently in a parabolic basin of $f_\lambda$, i.e. there exists $\lambda_0\in\Lambda$ such that $c(\lambda_0)$ is not attracted by a parabolic cycle of $f_{\lambda_0}$. There exists a universal constant $0<\kappa<1$ and a continuous function $M:\Lambda\to]1,+\infty[$ such that, for any $n\geq1$ and any $\lambda\in\Lambda$,  
\begin{itemize}
\item either $c(\lambda)$ lies in the immediate basin of an attracting cycle of period $p$ dividing $n$,
\item or $d_{\p^1}(f^n_\lambda(c(\lambda)),c(\lambda))\geq \kappa\cdot M(\lambda)^{-n}$.
\end{itemize}
In particular, when $c(\lambda)\in\J_\lambda$, then $d_{\p^1}(f^n_\lambda(c(\lambda)),c(\lambda))\geq \kappa\cdot M(\lambda)^{-n}$.
\end{lemma}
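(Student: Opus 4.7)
The plan is to adapt Przytycki's original argument~\cite{Przytycki3} to the parametric setting. I would prove the contrapositive: if $d_{\p^1}(f_\lambda^n(c(\lambda)),c(\lambda))<\kappa M(\lambda)^{-n}$ for a universal $\kappa>0$ to be fixed, then $c(\lambda)$ must lie in the immediate basin of some strictly attracting cycle of $f_\lambda$ of period dividing $n$. The natural continuous function to use is
\[
M(\lambda):=\max\bigl\{2,\,\sup_{z\in\p^1}f_\lambda^\#(z)\bigr\},
\]
which is finite and continuous in $\lambda$ by holomorphicity of $f$ and compactness of $\p^1$; it gives the global bound $(f_\lambda^n)^\#\leq M(\lambda)^n$. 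The last assertion, for $c(\lambda)\in\J_\lambda$, then follows for free, since attracting basins lie in the Fatou set.

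Writing $g=f_\lambda^n$ and $c=c(\lambda)$, the criticality of $c$ gives $g^\#(c)=0$, so in local Euclidean charts around $c$ and $g(c)$ one has an expansion
\[
g(z)=g(c)+A_n(z-c)^{k+1}+O\bigl((z-c)^{k+2}\bigr),
\]
where $k\geq 1$ is the local multiplicity of $c$ as a critical point of $f_\lambda$, with a quantitative bound $|A_n|\leq C_\lambda M(\lambda)^n$ obtained by the chain rule. Setting $\delta:=d_{\p^1}(g(c),c)$ and $h(z):=g(z)-z$, I would apply the argument principle to $h$ on the spherical disk $|z-c|=r$ with $r$ of order $M(\lambda)^{-n}$ and larger than $2\delta$: on the boundary the linear part $-(z-c)$ dominates both the affine term $g(c)-c$ (of size $\delta$) and the higher-order term $A_n(z-c)^{k+1}$ (of size $\lesssim\kappa r$), so that $|h|>r/2$. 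Hence $h$ has a unique zero $p\in D(c,r)$, which is a fixed point of $g=f_\lambda^n$ with $|p-c|\leq 2\delta$. The estimate
\[
g^\#(p)\leq \|D^2 g(c)\|\cdot|p-c|+O\bigl(|p-c|^2\bigr)\leq 2C_\lambda\kappa
\]
is $<1/2$ after choosing $\kappa$ small enough, which forces $p$ to be a strictly attracting periodic point of $f_\lambda$ of period dividing $n$, and places $c$ inside the local contracting neighborhood of $p$, hence in its immediate basin. Absorbing $C_\lambda$ into $M(\lambda)$ yields the stated form.

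The main obstacle is the uniform second-order bound $|A_n|\leq C_\lambda M(\lambda)^n$, which fixes the scale $r\sim M(\lambda)^{-n}$ on which the Rouché argument closes up. This is proved by induction on $n$: differentiating $f^n=f\circ f^{n-1}$ twice and using that the spherical derivative $(f_\lambda^j)^\#(c)=0$ for every $j\geq 1$ kills all but one term of the chain rule, leaving a product of first spherical derivatives of $f_\lambda$ along the forward orbit of $c$ together with a single second-derivative factor of $f_\lambda$, each controlled by $M(\lambda)$. The hypothesis that $c(\lambda)$ is not persistently in a parabolic basin is consistent with the dichotomy but is not directly invoked in the single-$\lambda$ argument: the quantitative contraction $g^\#(p)<1/2$ established above automatically rules out the parabolic alternative.
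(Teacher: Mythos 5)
Your overall route is genuinely different from the paper's: you try to \emph{construct} an attracting fixed point of $g=f_\lambda^n$ near $c(\lambda)$ by a quantitative Rouch\'e argument and a multiplier estimate, whereas the paper never produces a fixed point at all -- it shows (following Przytycki) that the ball $\B(c(\lambda),2\epsilon)$ would be forward invariant under $f_\lambda^n$, derives a contradiction with Montel's theorem when that ball meets $\J_\lambda$, and then disposes of the remaining Fatou cases by a geometric argument and, for the parabolic-basin case, by the activity/perturbation argument where the ``not persistently in a parabolic basin'' hypothesis is genuinely used. If your single-map argument were complete it would even make that hypothesis superfluous, so the claim at the end of your proposal is a strong one and rests entirely on the quantitative estimates.

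That is where the gap is. For the Rouch\'e domination on the circle $\{|z-c|=r\}$ with $r\sim M(\lambda)^{-n}$, and again for the multiplier bound $g^\#(p)\leq\|D^2g(c)\|\,|p-c|+O(|p-c|^2)$, you need control of $g(z)-g(c)$ (equivalently of $Dg$, $D^2g$, and the Taylor remainder) \emph{on the whole disk}, uniformly in $n$. Your chain-rule computation only bounds the second derivative \emph{at the point} $c$: since $Df^{n-1}(c)=0$ one indeed gets $D^2f_\lambda^n(c)=\prod_{j=1}^{n-1}Df_\lambda(f_\lambda^j(c))\cdot D^2f_\lambda(c)$, hence $\|D^2f_\lambda^n(c)\|\leq M(\lambda)^n$; but away from $c$ the cancellation is lost and globally $\|D^2f_\lambda^n\|$ grows like $nM(\lambda)^{2n-1}$, while the constants hidden in your $O((z-c)^{k+2})$ and $O(|p-c|^2)$ terms involve higher derivatives of $f_\lambda^n$ whose dependence on $n$ is never addressed -- and this uniformity is precisely the crux of the lemma. (A secondary slip: the leading exponent of $g$ at $c$ is not $k+1$ with $k$ the multiplicity of $c$ for $f_\lambda$ if the orbit of $c$ meets further critical points.) The gap is fixable, and the fix is exactly the paper's key estimate: apply the quadratic bound only to the \emph{first} iterate (using $Df_\lambda(c)=0$ and $\|D^2f_\lambda\|\leq M(\lambda)$) and the Lipschitz bound $M(\lambda)^{n-1}$ to the remaining ones, e.g.\ $|(f_\lambda^n)'(z)|\leq M(\lambda)^{n-1}\cdot M(\lambda)\,d_{\p^1}(z,c)$ up to universal chart constants. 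With that, $f_\lambda^n$ maps a disk of radius $\kappa M(\lambda)^{-n}$ about $c$ into itself as a contraction, and your conclusion (attracting cycle of period dividing $n$, with $c(\lambda)$ in its immediate basin) follows by the Banach fixed point theorem, with no need for the argument principle; also include $\sup_z\|D^2f_\lambda(z)\|$ in the definition of $M(\lambda)$, as the paper does, rather than hiding it in a $C_\lambda$ to be absorbed afterwards.
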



\begin{proof}
Notice that the functions $M_1, M :\Lambda\to]1,+\infty[$ defined by
$$M_1(\lambda):=\sup_{z\in\p^1}f_\lambda^\#(z)\in]1,+\infty[$$
and 
$$M(\lambda):=M_1(\lambda)^2$$
are continuous on $\Lambda$. Moreover, the map $f_\lambda:\p^1\to\p^1$ is Lipschitz with constant $M_1(\lambda)$ with respect to $d_{\p^1}$, i.e.
$$d_{\p^1}(f_\lambda(z),f_\lambda(w))\leq M_1(\lambda)d_{\p^1}(z,w)~,$$
for any $z,w\in\p^1$ and any $\lambda\in\Lambda$.  We rely on the following.

\begin{claim}
There exists a constant $\kappa>0$ such that for every $\lambda\in\Lambda$ and any $n\geq1$ if $f_\lambda^n(c(\lambda))\neq c(\lambda)$ and $d_{\p^1}(c(\lambda),\J_\lambda)< \kappa M(\lambda)^{-n}$, then
\[d_{\p^1}(f^n_\lambda(c(\lambda)),c(\lambda))\geq \kappa M(\lambda)^{-n}~.\]
\end{claim}
Fix $\lambda\in\Lambda$ and $n\geq1$. We now assume all along the proof that $c(\lambda)$ does not belong to an attracting basin of period $p|n$. In particular, $f_\lambda^n(c(\lambda))\neq c(\lambda)$.  When $c(\lambda)\in \J_\lambda$, the above Claim implies
\[d_{\p^1}(f^n_\lambda(c(\lambda)),c(\lambda))\geq\kappa\cdot M(\lambda)^{-n}~,\]
as required.
Assume now $c(\lambda)\notin \J_\lambda$. There are two distinct cases to treat. First, assume $c(\lambda)$ lies in a parabolic basin of $f_{\lambda}$. Since $c(\lambda)$ is not persistently in such a component, the critical point is active at $\lambda$, hence there exists $\lambda_k\to\lambda$ with $c(\lambda_k)\in\J_{\lambda_k}$ by Montel Theorem. By continuity of the function $\lambda\mapsto d_{\p^1}(f^n_\lambda(c(\lambda)),c(\lambda))-\kappa\cdot M(\lambda)^{-n}$, we find
\[d_{\p^1}(f^n_\lambda(c(\lambda)),c(\lambda))-\kappa\cdot M(\lambda)^{-n}=\lim_{k\to\infty}\left(d_{\p^1}(f^n_{\lambda_k}(c(\lambda_k)),c(\lambda_k))-\kappa\cdot M(\lambda_k)^{-n}\right)\geq0\]
in that case, i.e. $d_{\p^1}(f^n_\lambda(c(\lambda)),c(\lambda))\geq\kappa\cdot M(\lambda)^{-n}$.

~

In any other case, by assumption, $f_\lambda^n(c(\lambda))$ lies in a different Fatou component of $f_\lambda$ than $c(\lambda)$. Then, either $d_{\p^1}(c(\lambda),\J_\lambda)<\kappa\cdot M(\lambda)^{-n}$ which, by the Claim, implies
\[d_{\p^1}(f^n_\lambda(c(\lambda)),c(\lambda))\geq \kappa M(\lambda)^{-n}~,\]
or $d_{\p^1}(c(\lambda),\J_\lambda)\geq\kappa\cdot M(\lambda)^{-n}$ and we have
$$d_{\p^1}(f^n_\lambda(c(\lambda)),c(\lambda))\geq d_{\p^1}(c(\lambda),\J_\lambda)\geq\kappa\cdot M(\lambda)^{-n}$$
and the proof is complete.
\end{proof}


We are left with proving the Claim.
\begin{proof}[Proof of the Claim] In what follows, we let $\B(z,r)$ (resp. $\B^e(z,r)$) denote the spherical (resp. euclidean) ball of center $z$ and radius $r$. Recall that the group of (holomorphic) isometries for the spherical metric acts transitivelly on $\p^1$. Pick some parameter $\lambda$ and let $A$ and $B$ be such isometries. Then for $r>0$:
	\begin{equation*}
	\textup{diam}\,f_\lambda(\B(c(\lambda),r))= 	\textup{diam}\, A\circ f_\lambda \circ B ( B^{-1}(\B(c(\lambda),r)))= \textup{diam}\, A\circ f_\lambda \circ B ( (\B( B^{-1}(c(\lambda)),r))).
	\end{equation*}
Choosing $A$ and $B$ such that $A(f_\lambda(c(\lambda)))=0$ and $B^{-1}(c(\lambda))=0$ gives
	\begin{equation*}
	\textup{diam}\,f_\lambda(\B(c(\lambda),r))= \textup{diam}\, A\circ f_\lambda \circ B ( (\B(0,r))).
	\end{equation*}	
Let $g$ be the rational map of $\p^1$ defined by $g:= A\circ f_\lambda \circ B$ so that $g(0)=0$. As $A$ and $B$ are isometries, we have $g^\#(z)= f_\lambda^\#(B(z))$ so that $\sup_{z\in\p^1}g^\#(z)= \sup_{z\in\p^1}f_\lambda^\#(z)$. Hence $g$ is $M_1(\lambda)$-Lipschtiz. Set now $r_0 :=\frac{1}{2M_1(\lambda)} (<1/2)$, then:
	\begin{equation*}
\forall z \in \B(0,r_0), \ d_{\p^1}(g(z),g(0))= d_{\p^1}(g(z),0)\leq \frac{1}{2} ~,
	\end{equation*}	
so that $ g(\B(0,r_0)) \subset \B(0,1/2)$. As the spherical metric and the euclidean metric are comparable on $\B(0,1/2)$, there exists $c >1$ such that for all $r\leq 1/2$,  
$$\B^e\left(0,r\right)\subset \B(0,r)\subset \B^e(0,cr).$$ 
In particular,
$$ \B^e\left(0,\frac{1}{2M_1(\lambda)}\right)\subset \B\left(0,\frac{1}{2M_1(\lambda)}\right).$$ 
so that :
\begin{equation*} 
g\left(\B^e\left(0,\frac{1}{2M_1(\lambda)}\right)\right)\subset \B(0,1/2)\subset \B^e(0,c/2).
\end{equation*}
Whence $g:\B^e\left(0,\frac{1}{2M_1(\lambda)}\right)\to \B^e(0,c/2)$ is holomorphic. Applying Cauchy's inequality for $g'$ we find
$$\|g''\|_{\infty, \B^e(0,\frac{1}{4M_1(\lambda)})} \leq \frac{\|g'\|_{\infty,\B^e(0,\frac{1}{2M_1(\lambda)})}}{(4M_1(\lambda))^{-1}}\leq c' M_1(\lambda)^2=c' M(\lambda) $$ 
	where $c'$ does not depend on $\lambda$ and we used that $|g'|\leq c g^\#$ on $\B^e(0,\frac{1}{2M_1(\lambda)})$. Since $g'(0)=0$, up to replacing $c'$ by $c^2c'$, we deduce that
	$\|g'\|_{\infty, \B^e(0,cr)} \leq c' M(\lambda) r,$ 
	and so $g(\B^e(0,cr))\subset \B^e(0, c' M(\lambda) r^2)$, for all $r\leq \frac{1}{4cM_1(\lambda)}$. Summing up, we have:
	\begin{equation*}
	g(\B(0, r))\subset \B(0, c' M(\lambda) r^2 ), \text{ for all } r \leq \frac{1}{4cM_1(\lambda)}.
	\end{equation*}
Going back to $f_\lambda$ and taking the diameter we deduce (using that $M_1(\lambda)\leq M(\lambda)$):
\begin{equation}\label{mean_value}
\forall r \leq \frac{1}{4cM(\lambda)}, \textup{diam}\,f_\lambda(\B(c(\lambda),r))\leq c' M(\lambda)^2 r^2,
\end{equation}
	where $c$ and $c'$ are constants that depend neither on $\lambda$ nor on $r$.
	
We now follow the proof of \cite[Lemma 1]{Przytycki3}.  Take $\kappa$ so that
$ \kappa \leq \frac{1}{4c'} $ and $\kappa \leq \frac{1}{8c}$ ($c$ and $c'$ are the constants in \eqref{mean_value}). Assume, by contradiction, that $d_{\p^1}(c(\lambda),\J_\lambda)< \kappa M(\lambda)^{-n}$, $d_{\p^1}(f^n_\lambda(c(\lambda)),c(\lambda))<\kappa M(\lambda)^{-n}$ and $f_\lambda^n(c(\lambda))\neq c(\lambda)$.

 Choose
$$\epsilon:=\max\{d_{\p^1}(c(\lambda),\J_\lambda),d_{\p^1}(c(\lambda),f^n_\lambda(c(\lambda)))\}>0~.$$
Then $\overline{\B}(c(\lambda),\epsilon)\cap\J_\lambda\neq\emptyset$. 
Since $f_\lambda$ is $M_1(\lambda)$-Lipschitz with respect to $d_{\p^1}$, we get for $j\geq 2$:
$$ \textup{diam}\,f^j_\lambda(\B(c(\lambda),2\epsilon))\leq  M_1(\lambda)\textup{diam}\,f^{j-1}_\lambda(\B(c(\lambda),2\epsilon)).$$
By our choice of $\kappa$, we have that $2\epsilon \leq \frac{1}{4cM(\lambda)}$, so combining with \eqref{mean_value} gives:
$$ \textup{diam}\,f^n_\lambda(\B(c(\lambda),2\epsilon))\leq 4c' M_1(\lambda)^{n+1}\epsilon^2\leq  4c' M(\lambda)^{n}\epsilon^2 <  \epsilon~.$$
In particular, $f_\lambda^n(\B(c(\lambda),2\epsilon))\subset \B(c(\lambda),2\epsilon)$ and $f_\lambda^{kn}(\B(c(\lambda),2\epsilon))\subset \B(c(\lambda),2\epsilon)$ follows for any $k\geq1$ by an immediate induction. By Montel's Theorem, the sequence $(f^{kn}_\lambda)_{k\geq1}$ is a normal family on $\B(c(\lambda),2\epsilon)$, which is a contradiction, since $\J_\lambda\cap\B(c(\lambda),2\epsilon)\neq\emptyset$.
\end{proof}

\subsection{$L^1$-estimate for local solutions of the Laplacian on affine curves}\label{Sec:apriori}

Our aim, in the present, is to give the following  $L^1$-estimate for solutions of the Laplacian in disks of algebraic curves. We let $\beta$ be the standard hermitian metric on $\C^k$. Precisely, we want to prove the following.

\begin{theorem}\label{tm:L1estimatedim1}
Pick $k\geq1$. There exists a constant $C > 0$ depending only on $k$ such that for every affine algebraic curve $S\subset\C^k$, every simply connected bounded domain of $\Omega$ in $S$ satisfying $\Omega\cap S_{\textup{sing}}=\emptyset$, every $z_0\in\Omega$ and every $f\in\mathcal{C}(\overline{\Omega},\C)$ holomorphic on $\Omega$ satisfying $dd^c\log|f|=\delta_{z_0}$ on $\Omega$, we have
\[\left\|\log|f|\right\|_{L^1(\Omega,\beta)} \leq C\max\left\{\left\|\log|f|\right\|_{L^\infty(\partial\Omega)},1\right\}\textup{Area}_\beta(\Omega)~.\]
\end{theorem}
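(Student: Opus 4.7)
The plan is to uniformize the bounded simply connected domain $\Omega$ by the unit disk $\D$, reducing the desired estimate to a universal inequality on $\D$ which is essentially a length-area / subharmonicity computation. Since $\Omega$ is a bounded simply connected Riemann surface (it avoids $S_{\textup{sing}}$), the uniformization theorem provides a biholomorphism $\varphi:\D\to\Omega$ with $\varphi(0)=z_0$; being biholomorphic, $\varphi$ is automatically proper, so $\varphi(\zeta)\to\partial\Omega$ as $\zeta\to\partial\D$.

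Set $M:=\|\log|f|\|_{L^\infty(\partial\Omega)}$. The hypothesis $dd^c\log|f|=\delta_{z_0}$ forces $f$ to have a unique simple zero at $z_0$ and to be non-vanishing on $\partial\Omega$. Thus $F:=f\circ\varphi$ is holomorphic on $\D$ with a single simple zero at the origin, and one may factor $F(\zeta)=\zeta\,G(\zeta)$ with $G$ nonvanishing holomorphic on $\D$, so that $\log|G|$ is harmonic on $\D$. Properness of $\varphi$ together with continuity of $f$ on $\overline\Omega$ give $e^{-M}\leq\liminf_{\zeta\to\partial\D}|G(\zeta)|$ and $\limsup_{\zeta\to\partial\D}|G(\zeta)|\leq e^{M}$; applying the maximum and minimum principles to the harmonic function $\log|G|$ yields $|\log|G||\leq M$ on $\D$, whence
\[|\log|F(\zeta)||\leq|\log|\zeta||+M\quad\text{for every }\zeta\in\D.\]

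Writing $\varphi=(\varphi_1,\ldots,\varphi_k)$, the pullback of the ambient area form is $\varphi^*\beta=|\varphi'(\zeta)|^2\,dA(\zeta)$ with $|\varphi'|^2=\sum_i|\varphi_i'|^2$, so $\textup{Area}_\beta(\Omega)=\int_\D|\varphi'|^2\,dA$. Changing variables,
\[\int_\Omega |\log|f||\,dA_\beta \;=\; \int_\D |\log|F||\,|\varphi'|^2\,dA \;\leq\; \int_\D |\log|\zeta||\,|\varphi'|^2\,dA \;+\; M\cdot\textup{Area}_\beta(\Omega).\]
The crucial step is the universal bound that for any holomorphic $\psi:\D\to\C^k$,
\[\int_\D |\log|\zeta||\,|\psi(\zeta)|^2\,dA \;\leq\; \tfrac{1}{2}\int_\D |\psi(\zeta)|^2\,dA.\]
I would prove this by noting that $|\psi|^2$ is subharmonic, hence its circular average $\mu(r):=\frac{1}{2\pi}\int_0^{2\pi}|\psi(re^{i\theta})|^2\,d\theta$ is non-decreasing in $r$, and applying Chebyshev's integral inequality to the oppositely monotone functions $r\mapsto|\log r|$ and $r\mapsto\mu(r)$:
\[\frac{\int_0^1|\log r|\,r\mu(r)\,dr}{\int_0^1 r\mu(r)\,dr}\;\leq\;\frac{\int_0^1|\log r|\,r\,dr}{\int_0^1 r\,dr}\;=\;\tfrac{1}{2}.\]
Equivalently, one expands each $\psi_i$ in a power series and uses $L^2(\D)$-orthogonality of monomials together with the explicit evaluations $\int_\D|\zeta|^{2n}\,dA=\pi/(n+1)$ and $\int_\D|\zeta|^{2n}|\log|\zeta||\,dA=\pi/(2(n+1)^2)$, giving the same mode-wise ratio $1/(2(n+1))\leq 1/2$.

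Applying this with $\psi=\varphi'$, we conclude $\int_\Omega|\log|f||\,dA_\beta\leq(M+\tfrac{1}{2})\textup{Area}_\beta(\Omega)\leq\tfrac{3}{2}\max(M,1)\textup{Area}_\beta(\Omega)$, which is the claimed estimate (with a constant in fact independent of $k$). The main technical point to handle carefully is the boundary argument yielding $|\log|G||\leq M$, since a biholomorphism of $\D$ onto a general simply connected region need not extend continuously to $\overline\D$; this is handled via $\liminf$/$\limsup$ statements at boundary points, invoking properness of $\varphi$ together with continuity of $f$ on $\overline\Omega$.
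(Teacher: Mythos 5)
Your argument is correct, and it takes a genuinely different route from the paper. Both proofs begin the same way: compare $\log|f|$ with the Green's function of $(\Omega,z_0)$ (your $\log|\zeta|$ after uniformizing, the paper's $\chi=\log|h|$ for a biholomorphism $h:\Omega\to\D$), using the maximum principle and the boundary bound $M$ to get $\bigl|\log|f|\bigr|\leq\bigl|\log|h|\bigr|+M$. The divergence is in how the Green's-function term is integrated against the area form of the curve. The paper stays on $\Omega\subset\C^k$, writes $\beta=dd^c\|z-z_0\|^2$, integrates by parts (Lemma~\ref{lm:Stokes}) over the sublevel set $\Omega'=\{\chi<-3K\}$, and then invokes the Briend--Duval length--area estimate (Lemma~\ref{lm:BriendDuval}) applied to $\Omega'\Subset\Omega''$ to convert a diameter bound into an area bound, with the modulus of the intermediate annulus computed through $h$. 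You instead pull the whole integral back to $\D$, where $\textup{Area}_\beta(\Omega)=c\int_\D|\varphi'|^2\,dA$ (the normalization constant $c$ of $\beta$ cancels from both sides), and reduce everything to the universal weighted estimate $\int_\D\bigl|\log|\zeta|\bigr|\,|\psi|^2\,dA\leq\tfrac12\int_\D|\psi|^2\,dA$, which you justify correctly either by monotonicity of circular means of the subharmonic function $|\psi|^2$ plus Chebyshev's integral inequality, or by power-series orthogonality (the mode-wise evaluations $\pi/(n+1)$ and $\pi/(2(n+1)^2)$ are right). This buys you a self-contained proof with an explicit constant ($C=3/2$) independent of $k$, and it bypasses the diameter-versus-area step that makes the paper's constant depend on the Briend--Duval constant $\tau$; the paper's version, on the other hand, never leaves the ambient $\C^k$ and reuses a lemma needed elsewhere in the article. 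One cosmetic slip: the hypothesis $dd^c\log|f|=\delta_{z_0}$ on $\Omega$ does not by itself force $f\neq0$ on $\partial\Omega$; rather, if $f$ vanishes on $\partial\Omega$ then $\|\log|f|\|_{L^\infty(\partial\Omega)}=+\infty$ and the inequality is vacuous (this is exactly how the paper disposes of that case), after which your boundary $\liminf$/$\limsup$ argument via properness of the uniformization and continuity of $f$ on $\overline\Omega$ is fine.
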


\begin{proof}
Set $u:=\log|f|$. First, if $f$ vanishes somewhere on $\partial\Omega$, then the estimate is trivial since $\left\|u\right\|_{L^\infty(\partial\Omega)}=+\infty$ then. We thus assume $f(\zeta)\neq0$ for all $\zeta\in\partial\Omega$.

 Let $h:\Omega\to\D$ be a biholomorphic map with $h(z_0)=0$. It is clear that $|h|$ extends continuously to $\partial\Omega$ with $|h|=1$ on $\partial\Omega$. We set $\chi:=\log|h|$ and $K:=\max\left\{\|u\|_{L^\infty(\partial\Omega)},1\right\}$. The function $\chi$ is subharmonic on $\Omega$, satisfies $dd^c\chi=\delta_{z_0}$ and $\chi\leq0$. The functions $h/f$ and $f/h$ are holomorphic on $\Omega$ and satisfy
\[\left|\frac{f}{h}\right|\leq \exp(K) \ \text{ and } \ \left|\frac{h}{f}\right|\leq \exp(K) \ \text{ both on }\partial\Omega~.\]
Using twice the maximum principle, we get $\chi-K\leq u\leq K+\chi\leq K-\chi$ on $\Omega$. Define
$$\Omega':=\left\{z\in\Omega\, ; \ \chi(z)<-3K\right\} \ \text{ and } \ \Omega'':=\left\{z\in\Omega\, ;  \chi(z)<-\frac{5}{2}K\right\}~.$$
We can decompose $\|u\|_{L^1(\Omega)}$ as follows
\begin{eqnarray*}
\|u\|_{L^1(\Omega)} & = & \int_{\Omega}|u|\,\beta\leq  \int_{\Omega}(K-\chi)\,\beta\leq K\textup{Area}_\beta(\Omega)-\int_{\Omega}\chi\,\beta\\
& \leq & K\textup{Area}_\beta(\Omega)-\int_{\Omega\setminus\Omega'}\chi\,\beta-\int_{\Omega'}\chi\,\beta\\
& \leq & 4K\textup{Area}_\beta(\Omega)-\int_{\Omega'}\chi\,\beta~.
\end{eqnarray*}
We are thus left with estimating $\int_{\Omega'}\chi\,\beta$ from below. Writing $v(z):=\|z-z_0\|^2$, where $\|\cdot\|$ is the euclidean norm of $\C^k$, we have $\beta=dd^cv$ and by Stokes (see Lemma~\ref{lm:Stokes}):
\begin{eqnarray*}
\int_{\Omega'}\chi\,\beta & = & \int_{\Omega'}v\,dd^c\log|h|-\int_{\partial \Omega'}v\,d^c\log|h|+\int_{\partial \Omega'}\log|h|\,d^cv~.
\end{eqnarray*}
As $dd^c\log|h|=\delta_{z_0}$, $d^c \log|h|^2= d^c |h|^2 / |h|^2$  and by definition of $\Omega'$,  we find
\begin{eqnarray*}
\int_{\Omega'}\chi\,\beta & = &-3K\int_{\partial \Omega'}\,d^cv-\int_{\partial \Omega'}v\,d^c\log|h|\\
& = & -3K\text{Area}_\beta(\Omega')-\frac{1}{2}\exp(6K)\int_{\partial \Omega'}v\,d^c|h|^2~.
\end{eqnarray*}
Applying again Stokes yields
\begin{eqnarray*}
\int_{\partial \Omega'}v\,d^c|h|^2 & = & \int_{\Omega'}v\,dd^c|h|^2-\int_{\Omega'}|h|^2\,\beta +\int_{\partial \Omega'}|h|^2\,d^cv\\
&\leq & \int_{\Omega'}v\,dd^c|h|^2   + \exp(-6K)\textup{Area}_\beta(\Omega')
\end{eqnarray*}
since $|h|\leq 1$ and $v\geq 0$.

All the above estimates summarize as follows
\begin{eqnarray}
0\geq\int_{\Omega'}\chi\,\beta & \geq & -4K\cdot\textup{Area}_\beta(\Omega')-\frac{1}{2}\exp(6K)\int_{\Omega'}vdd^c|h|^2~,\label{eq:ineg}
\end{eqnarray}
since $K\geq1$ and $|h|\geq0$. Since $v(z)=\|z-z_0\|^2$, we can bound it in $\Omega'$ from above by $(\textup{diam}_\beta(\Omega'))^2$. We now apply Lemma \ref{lm:BriendDuval} to $\Omega'\Subset\Omega''$ and find
\begin{eqnarray}
\int_{\Omega'}vdd^c|h|^2 & \leq & (\textup{diam}_\beta(\Omega'))^2\cdot\int_{\Omega'}dd^c|h|^2
=(\textup{diam}_\beta(\Omega'))^2\cdot\int_{\D(0,e^{-3K})}\beta\label{remarkreferee2}\\
& \leq & (\textup{diam}_\beta(\Omega'))^2\cdot \pi\exp(-6K)\nonumber\\
& \leq & \pi\exp(-6K)\tau\cdot\frac{\textup{Area}_\beta(\Omega'')}{\min\left\{1,\textup{mod}(\Omega''\setminus \overline{\Omega'})\right\}}\nonumber~.
\end{eqnarray}
Since $h$ is a biholomorphism and since $K\geq1$, we have
\[\textup{mod}(\Omega''\setminus \overline{\Omega'})=\textup{mod}(\D(0,e^{-5K/2})\setminus\overline{\D(0,e^{-3K})})= \frac{K}{2\pi}\geq\frac{1}{2\pi}~.\]
Taking $C:=8+\pi^2\tau$ ($K\geq1$) ends the proof.
\end{proof}

\begin{remark}\rm
Although it seems that all the analysis can be made directly on $\D$, in \eqref{remarkreferee2} we need to work on $\Omega'$ to bound $v$ by the euclidean diameter of $\Omega'$.
\end{remark}

\section{Speed of convergence in the unicritical family}\label{sec:unicritical}

This section is devoted to the proof of Theorem \ref{tm:Mandelbrot}. The method we use can be seen as a toy-model for the proof of Theorem \ref{tm:centers}. 
Our idea consists in giving estimates in $L^1_{\textup{loc}}$ for the sequence $c\mapsto d^{-n+1} \log |p_c^n(0)| - g_{\Mand_d}$ of DSH functions (difference of subharmonic functions) and then to deduce Theorem \ref{tm:Mandelbrot} from these estimates.

\subsection{Preliminaries}

Let $d\geq 2$. Recall that the \emph{Multibrot set} $\Mand_d$ is defined by
\[\Mand_d:=\{c\in\C \, ; \ \J_c \text{ is connected}\}~.\]
  Observe that $0$ on $\C$ is the unique marked
critical point of $p_c$ (other than $\infty$). According to the notation introduced in section \ref{sec:defbif}, the Green function should be denoted by $g_0$, we shall instead use the notations $g_c$ which is more classical in that setting. In particular, 
$\Mand_d$ coincides with the set $\{c\in\C\, ; \ g_c(0)=0\}$. The bifurcation locus of the family $(p_c)_{c\in\C}$ is known to coincide with the boundary $\partial\Mand_d$ of the Multibrot set. In this family, the bifurcation measure is given by
\[dd^cg_c(0)=\frac{1}{d}dd^cg_c(c)=\frac{1}{d}dd^cg_{\Mand_d}\]
where $g_{\Mand_d}$ is the Green function of the compact set $\Mand_d$ (see e.g. \cite{Steinmetz}). In particular, $dd^cg_{\Mand_d}=\mu_{\Mand_d}$ is the equilibrium measure (and the harmonic measure) of $\Mand_d$.

~

Let us first prove the following. 

\begin{lemma}
There exists $C_1>0$ depending only on $d$ such that
\[\left|d^{-n+1}\log|p_c^n(0)|-g_{\Mand_d}(c)\right|\leq\frac{n}{d^{n-1}}C_1 ~,\]
 for any $c\in\C\setminus\Mand_d$ and any $n\geq1$.\label{lm:outside}
\end{lemma}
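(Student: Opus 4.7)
The proof combines two ingredients. The first is the analog of Lemma~\ref{ineq:green} in the unicritical family: there is a constant $C_0>0$ depending only on $d$ with
\[\left|d^{-n+1}\log^+|p_c^n(0)| - g_{\Mand_d}(c)\right| \leq \frac{\log^+|c|+C_0}{d^{n-1}},\]
obtained by telescoping $g_c(0)-d^{-n}\log^+|p_c^n(0)|$ and using the identity $g_{\Mand_d}(c)=d\,g_c(0)$. This controls the difference with $\log^+$ in place of $\log$. The second ingredient is Przytycki's Lemma~\ref{lm:Przytycki} applied to the unicritical family $(p_c)_{c\in\C}$ with the constant marked critical point $0$: since $c\notin\Mand_d$ forces the critical orbit of $p_c$ to escape to infinity, the point $0$ is not in any finite attracting basin, hence
\[d_{\p^1}(p_c^n(0),0)\geq\kappa\,M(c)^{-n}\]
for a universal $\kappa>0$ and a continuous function $M:\C\to(1,+\infty)$. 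Translating to the Euclidean distance and using $\log|w|=\log^+|w|-\log^+(1/|w|)$, this provides the lower control on $\log^+(1/|p_c^n(0)|)$ that the first ingredient lacks.

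Writing $\phi_n(c):=d^{-n+1}\log|p_c^n(0)|$, I would then partition $\C\setminus\Mand_d$ into two regimes using a cutoff $R_0=R_0(d)$ chosen so that $\Mand_d\subset\{|c|\leq R_0\}$ and $R_0^{d-1}\geq 2^d$. In the bounded regime $|c|\leq R_0$, $M(c)\leq M_0:=\max_{|c|\leq R_0}M(c)<\infty$ and $\log^+|c|\leq\log R_0$; combining the two ingredients yields $|\phi_n-g_{\Mand_d}|\leq C_1\,n/d^{n-1}$, the factor $n$ originating from $M(c)^{-n}$ in Przytycki. In the escape regime $|c|\geq R_0$, a quick induction using $|p_c(z)|\geq|z|$ whenever $|z|\geq R_0$ and $|c|\leq|z|$ shows $|p_c^k(0)|\geq|c|$ for every $k\geq 1$; in particular $\log=\log^+$ on the orbit, and the telescoping identity
\[g_{\Mand_d}(c) - \phi_n(c) = \sum_{k\geq n} d^{-k}\log\left|1+\frac{c}{p_c^k(0)^d}\right|\]
can be bounded term-by-term using $|c/p_c^k(0)^d|\leq|c|^{1-d}\leq 2^{-d}$, giving $|\phi_n(c)-g_{\Mand_d}(c)|\leq C_2/d^{n-1}\leq C_2\,n/d^{n-1}$.

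The main technical obstacle I anticipate is the uniform spherical-to-Euclidean translation of Przytycki's bound on the compact set $\{|c|\leq R_0\}$, which requires a local comparison between $d_{\p^1}(\cdot,0)$ and $|\cdot|$ and requires the constant $M_0$ to be absorbed into the universal $C_1$. The escape regime needs no Przytycki at all, since the fast growth of the orbit both forces $\log=\log^+$ and makes the telescoping error decay geometrically.
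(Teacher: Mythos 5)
Your escape-regime computation (for $|c|\geq R_0$) is fine, and ingredient 1 (the telescoping comparison of $d^{-n+1}\log^+|p_c^n(0)|$ with $g_{\Mand_d}(c)=d\,g_c(0)$) is correct. The gap is in the bounded regime, and it is exactly at the crucial point: the lower bound on $|p_c^n(0)|$ for $c\in\C\setminus\Mand_d$ near $\partial\Mand_d$. You invoke Lemma~\ref{lm:Przytycki} there on the grounds that $0$ "is not in any finite attracting basin", but the lemma's dichotomy makes no such restriction and cannot: for $c\notin\Mand_d$ the critical point $0$ lies in the immediate basin of the superattracting fixed point $\infty$ of $p_c$ viewed as a rational map on $\p^1$, a cycle of period $1$ dividing every $n$, so the first alternative of the dichotomy holds and the lemma gives no lower bound on $d_{\p^1}(p_c^n(0),0)$. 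This is not a cosmetic issue with the statement: in the proof of Lemma~\ref{lm:Przytycki}, when $d_{\p^1}(c(\lambda),\J_\lambda)\geq\kappa M^{-n}$ the distance bound is deduced from the fact that $c(\lambda)$ and $f_\lambda^n(c(\lambda))$ lie in \emph{different} Fatou components, whereas here $0$ and $p_c^n(0)$ lie in the \emph{same} component (the basin of $\infty$), so that step breaks down. The estimate $\log|p_c^n(0)|\geq -Cn$ on the exterior, which is what you need, is precisely the nontrivial content of the lemma being proved, so it cannot be taken as a pointwise dynamical fact at exterior parameters.

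The paper avoids this by applying Przytycki only on $\partial\Mand_d$, where it genuinely applies ($p_c$ has no attracting cycle in $\C$ there and the critical orbit is bounded, so $0$ is not in the basin of $\infty$), obtaining the two-sided bound $|u_n-g_{\Mand_d}|\leq C_1 n/d^{n-1}$ on the boundary; it then observes that $h_n:=u_n-g_{\Mand_d}$ is harmonic on $\C\setminus\Mand_d$, extends continuously (with value $0$) and hence harmonically through $\infty$, and concludes by the maximum principle on $\p^1\setminus\Mand_d$. Your argument can be repaired along the same lines: keep your boundary-type estimate only on $\partial\Mand_d$ (where Przytycki is legitimate), keep your escape-regime bound on a large circle $\{|c|=R_0\}$ or at $\infty$, and propagate to the intermediate region $\{c\notin\Mand_d,\ |c|<R_0\}$ by harmonicity of $u_n-g_{\Mand_d}$ and the maximum principle, rather than by a pointwise dynamical estimate at exterior parameters.
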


\begin{proof}
Let $n\geq1$ and set $u_n(c):=d^{-n+1}\log|p_c^n(0)|$, $c\in\C$. A classical and easy computation shows that, for any $c\in\Mand_d$, one has $|p_c^n(0)|\leq2$(e.g. \cite{Orsay1}).
Let $M_1:=\sup_{c\in\partial \Mand_d}M(c)$, where $M(c)$ is the constant given by Lemma \ref{lm:Przytycki}. Then, for all $c\in\partial\Mand_d$,
$$-\frac{1}{d^{n-1}}\left(n\log M_1-\log\kappa\right)\leq u_n(c)-g_{\Mand_d}(c)\leq \frac{1}{d^{n-1}}\log2~,$$
since $g_{\Mand_d}(c)=0$. Setting $C_1:=2\max\{\kappa^{-1},M_1,2\}$, we get for any $c\in\partial\Mand_d$
\begin{eqnarray}
-\frac{n}{d^{n-1}}\log C_1\leq u_n(c)-g_{\Mand_d}(c)\leq \frac{n}{d^{n-1}}\log C_1~.\label{eq:endehorsdeMandelbrot}
\end{eqnarray}
Now, as $c\to\infty$, by definition of $p_c$, one has 
\[\lim_{c\to\infty}\left(u_n(c)-\log|c|\right)=\lim_{c\to\infty}\left(d^{-(n-1)}\log\left|c^{d^{n-1}}\right|-\log|c|\right)=0~.\]
As $g_{\Mand_d}(c)=\log|c|+o(1)$ as $c\to\infty$ (by definition of the Green function), the function $u_n-g_{\Mand_d}$ extends continuously on the xhole  $\p^1\setminus\Mand_d$. Moreover, $h_n:=u_n-g_{\Mand_d}$ is harmonic on $\C\setminus\Mand_d$. 

The function $h$ is thus subharmonic and continuous on $\p^1\setminus\Mand_d$. Moreover, since $h_n$ is harmonic on $\C\setminus\Mand_d$, one has $\textup{supp}(\Delta h_n)\cap(\p^1\setminus\Mand_d)\subset\{\infty\}$. As $h_n$ is continuous at $\infty$, it can not have a Dirac mass here, hence $\Delta h_n=0$ on $\p^1\setminus\Mand_d$, i.e. $h_n$ is harmonic on $\p^1\setminus\Mand_d$. By the maximum principle, applied successively to $h_n$ and $-h_n$, \eqref{eq:endehorsdeMandelbrot} gives the wanted estimate.
\end{proof}

\subsection{In the Multibrot set}
For every $n\geq1$, we denote $v_n:=\log|p_c^n(0)|$ (hence $u_n=d^{-n+1}v_n$). We will prove that:
\begin{theorem}\label{lm:L1loc}
There exists $C>0$ depending only on $d$ such that for any $n\geq1$,
\[\|v_n \|_{L^1(\Mand_d)}\leq C\cdot n\cdot \textup{Area}(\Mand_d)~.\]
\end{theorem}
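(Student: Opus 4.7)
The plan is to decompose $\Mand_d$ into the hyperbolic components $H_1,\ldots,H_N$ (with $N=d^{n-1}$) containing the zeros $c_1,\ldots,c_N$ of the polynomial $c\mapsto p_c^n(0)$, together with their complement $R:=\Mand_d\setminus\bigcup_i H_i$. On $R$, I would bound $v_n$ pointwise using Przytycki's distance estimate (Lemma~\ref{lm:Przytycki}); on each $H_i$, which is a topological disk containing the single simple zero $c_i$ of $p_c^n(0)$, I would apply Theorem~\ref{tm:L1estimatedim1}.

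For the estimate on $R$, note that if $c\in R$ then the marked critical point $0$ of $p_c$ is never in the immediate basin of an attracting cycle of period dividing $n$: either $c\in\partial\Mand_d$ (so $0\in\J_c$, or $0$ lies in a non-persistent parabolic basin), or $c$ lies in a hyperbolic component of period $m$ with $m\nmid n$, or $c$ belongs to a (hypothetical) queer component. Setting $M:=\sup_{c\in\Mand_d}M(c)<\infty$ by compactness of $\Mand_d$ and continuity of $M(\cdot)$, Lemma~\ref{lm:Przytycki} yields $|p_c^n(0)|=d_{\p^1}(p_c^n(0),0)\geq\kappa\,M^{-n}$ uniformly on $R$. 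Combined with the classical upper bound $|p_c^n(0)|\leq 2$ on $\Mand_d$, this gives $|v_n(c)|\leq C_1 n$ on $R$ for some $C_1=C_1(d)$, and hence
\[\int_R|v_n|\,dA\;\leq\; C_1\,n\cdot\textup{Area}(\Mand_d).\]

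For each $H_i$ the facts I would use are: (i) $H_i$ is a bounded simply connected domain, which is classical for the unicritical family via the multiplier parametrization $H_i\simeq\D$; (ii) $\partial H_i\subset R$, since distinct hyperbolic components are pairwise disjoint open subsets of the interior of $\Mand_d$; (iii) the polynomial $c\mapsto p_c^n(0)$ has a single zero in $H_i$, namely the center $c_i$, and this zero is simple (a specific feature of the unicritical family). From (ii) and the previous paragraph, $\|v_n\|_{L^\infty(\partial H_i)}\leq C_1 n$. Applying Theorem~\ref{tm:L1estimatedim1} with $k=1$, $S=\C$, $\Omega=H_i$, $z_0=c_i$, and $f=p_c^n(0)$ gives
\[\|v_n\|_{L^1(H_i)}\;\leq\; C_2\max(C_1 n,1)\,\textup{Area}(H_i)\;\leq\; C_3\,n\cdot\textup{Area}(H_i).\]
Summing over the pairwise disjoint $H_i\subset\Mand_d$ yields $\sum_i\|v_n\|_{L^1(H_i)}\leq C_3\,n\cdot\textup{Area}(\Mand_d)$, and adding the estimate on $R$ concludes.

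The main obstacle I anticipate is checking simultaneously that all the components $H_i$ are topological disks whose boundaries land in $R$, and that Przytycki's estimate really applies uniformly on $R$ (in particular at parabolic boundary parameters and over any queer components that might exist). Once these structural facts are secured, the argument is a direct combination of Lemma~\ref{lm:Przytycki} and Theorem~\ref{tm:L1estimatedim1}, with the Przytycki bound converting the absence of an attracting basin of period dividing $n$ into the pointwise $L^\infty$ control that feeds into the $L^1$ estimate.
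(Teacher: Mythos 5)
Your proof is correct and follows essentially the same route as the paper: the same decomposition of $\Mand_d$ into the components of $\mathcal{H}_n$ and their complement, the same application of Theorem~\ref{tm:L1estimatedim1} on each component (using the simplicity of the roots, Lemma~\ref{lm:multi}), and the same Przytycki input for the $L^\infty$ control on the boundary. The only (harmless) difference is that you invoke the dichotomy of Lemma~\ref{lm:Przytycki} pointwise on all of $R$, whereas the paper establishes the bound only on $\partial\Mand_d$ (via Lemma~\ref{lm:outside}) and propagates it to the interior components of $\Mand_d\setminus\mathcal{H}_n$ by harmonicity of $v_n$ and the maximum principle.
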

The next lemma will be use in the proof of Theorem~\ref{lm:L1loc}. Though it is classical, we include a proof for the sake of completeness.
\begin{lemma}
The polynomial $p_c^n(0)\in\C[c]$ has only simple roots in $\C$.\label{lm:multi}
\end{lemma}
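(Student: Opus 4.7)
Write $c_0\in\C$ for a root of $P_n(c):=p_c^n(0)$, and let $p$ denote the exact period of the critical point $0$ under iteration of $p_{c_0}$, so that $p\mid n$. The plan is to show $P_n'(c_0)\neq 0$ in three steps: first reduce to the case $n=p$; second, reinterpret $P_p'(c_0)$ as the derivative at $c_0$ of a holomorphic parametrization of the cycle through $0$; third, conclude by means of the classical description of the multiplier map on a hyperbolic component of $\Mand_d$.

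For the reduction, differentiating the recurrence $P_{k+1}(c)=P_k(c)^d+c$ yields $P_{k+1}'(c)=d\,P_k(c)^{d-1}P_k'(c)+1$. Setting $a_k:=P_k(c_0)$ and $\alpha_k:=d\,a_k^{d-1}$, exact periodicity gives $a_{jp}=0$, hence $\alpha_{jp}=0$, for every $j\ge 0$. Starting from $P_1'(c_0)=1$, each time $k$ reaches a multiple of $p$ the vanishing factor $\alpha_k$ restarts the recurrence: $P_{jp+1}'(c_0)=1=P_1'(c_0)$. Since $(\alpha_k)_k$ is $p$-periodic, a direct induction on $j$ yields $P_{jp+\ell}'(c_0)=P_\ell'(c_0)$ for all $j\ge 0$ and $1\le\ell\le p$; in particular $P_n'(c_0)=P_p'(c_0)$, so it suffices to prove $P_p'(c_0)\ne 0$.

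For the second and third steps, the cycle through $0$ being superattracting gives $(p_{c_0}^p)'(0)=0$, so $\partial_z[p_c^p(z)-z]|_{(c_0,0)}=-1\ne 0$. The implicit function theorem produces a unique holomorphic $Z$ in a neighborhood of $c_0$ with $Z(c_0)=0$ and $p_c^p(Z(c))=Z(c)$; differentiating this identity at $c_0$ gives $Z'(c_0)=P_p'(c_0)$. The cycle multiplier
\[
\rho(c):=(p_c^p)'(Z(c))=d^p\prod_{i=0}^{p-1}\bigl(p_c^i(Z(c))\bigr)^{d-1}
\]
has only one vanishing factor near $c_0$ (the one with $i=0$), the other cycle points $a_1,\ldots,a_{p-1}$ being nonzero, so $\rho(c)=C(c)\,Z(c)^{d-1}$ with $C(c_0)\ne 0$, and therefore $\mathrm{ord}_{c_0}(\rho)=(d-1)\cdot\mathrm{ord}_{c_0}(Z)$. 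I would then invoke the classical fact (Douady--Hubbard for $d=2$, Milnor in general) that the multiplier map extends to a proper holomorphic branched cover $\rho:H\to\D$ of degree $d-1$ from the hyperbolic component $H\ni c_0$, with $c_0$ as its sole preimage of $0$; this forces $\mathrm{ord}_{c_0}(\rho)=d-1$, hence $\mathrm{ord}_{c_0}(Z)=1$ and $Z'(c_0)\ne 0$, concluding the argument.

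The main obstacle is the last appeal to the degree of the multiplier map on a hyperbolic component, a nontrivial global result in the theory of $\Mand_d$. A self-contained alternative is to expand $p_c^p(z)=P_p(c)+A(c)z^d+O(z^{d+1})$ near $(c_0,0)$ with $A(c_0)\ne 0$ (using that $p_c^{(j)}(0)=0$ for $1\le j\le d-1$) and to prove transversality of the period-$p$ curve $\{\Phi_p^*(c,z)=0\}$ with $\{z=0\}$ at $(c_0,0)$ directly, either by a Böttcher-coordinate computation or by an Epstein-type quadratic-differential argument in the spirit of~\cite{favregauthier}.
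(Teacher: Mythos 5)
Your argument is correct, but it follows a genuinely different route from the paper. Your chain of steps checks out: the recurrence $P_{k+1}'=dP_k^{d-1}P_k'+1$ together with $p$-periodicity of the critical orbit does give $P_n'(c_0)=P_p'(c_0)$; the implicit function theorem identification $Z'(c_0)=P_p'(c_0)$ is valid because $(p_{c_0}^p)'(0)=0$; and the factorization $\rho(c)=C(c)Z(c)^{d-1}$ with $C(c_0)\neq0$ correctly yields $\mathrm{ord}_{c_0}(\rho)=(d-1)\,\mathrm{ord}_{c_0}(Z)$, so that the classical structure theorem for hyperbolic components of $\Mand_d$ (multiplier map a proper degree $d-1$ branched cover, totally ramified at the unique center) forces $\mathrm{ord}_{c_0}(Z)=1$. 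The paper, by contrast, proves the lemma in two lines of arithmetic: writing $Q_n(c)=p_c^n(0)\in\Z[c]$, the same recurrence gives $Q_n'=dQ_{n-1}'Q_{n-1}^{d-1}+1$, so the resultant $\mathrm{Res}(Q_n,Q_n')$ is the determinant of a matrix which, modulo $d$, is triangular with $1$'s on the diagonal; hence the discriminant is $\equiv 1\pmod d$, is nonzero, and all roots are simple. The arithmetic proof is elementary, self-contained, needs no reduction to the exact period, and imports nothing from the global theory of the Multibrot set; yours is conceptually appealing (it exhibits simplicity as transversality, i.e.\ as the statement that the local degree of the multiplier map at the center is exactly $d-1$), but it leans on a nontrivial global theorem (Douady--Hubbard for $d=2$, and its degree-$d$ unicritical analogue), which is heavier machinery than the statement being proved -- though not circular here, since the standard surgery-based proofs of that theorem do not use simplicity of the roots of $Q_n$, and the paper itself quotes the same fact in the remark preceding its Theorem on the $L^1$ bound. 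Your proposed ``self-contained alternative'' at the end is only a sketch and would need the transversality argument actually carried out to replace the appeal to the structure theorem.
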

\begin{proof}
Let $Q_n(c):=p_c^n(0)\in\mathbb{Z}[c]$. By definition, $Q_n(c)=Q_{n-1}(c)^d+c$, hence $Q_n'=dQ_{n-1}'Q_{n-1}^{d-1}+1$. In particular, the discriminant $\Delta(Q_n)=\textup{Res}(Q_n,Q_n')$ is the determinant of a matrix with entries divisible by $d$ below the diagonal and equal to $1$ on the diagonal. Hence reducing modulo $d$, it is upper triangular with 1 on the diagonal, i.e. $\Delta(Q_n)\equiv1 \ (\textrm{mod }d)$. In particular, if $Q_n(c_0)=0$, then $Q_n'(c_0)\neq0$ and the point $c_0$ is a simple root of $Q_n$.
\end{proof}
Let $n\geq1$. We denote by $\mathcal{H}_n$ the union of connected components $\Omega$ of the interior of $\Mand_d$ such that $\Per(n)\cap\Omega\neq\emptyset$. 
For $\Omega$ such a connected component, recall that $\Omega$ is simply connected and there exists only one $c_\Omega \in \Omega$ for which  $p_{c_\Omega}^n(0)=0$ (see~\cite{Orsay2}).

\begin{remark} \rm
This can be seen using the multiplier map $\rho:\Omega \to \D $ which is a branched cover of degree $d-1$ which is totally ramified at $\{c_\Omega\}:=\rho^{-1}\{0\}=\Per(n)\cap\Omega$. 
\end{remark}

The proof of Theorem~\ref{lm:L1loc} is an application of Theorem~\ref{tm:L1estimatedim1}.
\begin{proof}
We decompose $\|v_n\|_{L^1(\Mand_d)}$ as follows
\begin{eqnarray*}
\|v_n\|_{L^1(\Mand_d)}=\int_{\Mand_d}|v_n|\,\beta=\int_{\Mand_d\setminus\mathcal{H}_n}|v_n|\,\beta+\int_{\mathcal{H}_n} |v_n|\,\beta~.
\end{eqnarray*}
Let $U$ be a connected component of the interior of $\Mand_d\setminus\mathcal{H}_n$. By Lemma~\ref{lm:outside} (recall that $g_{\Mand_d}(c)=0$ in $\Mand_d$), we have that $|v_n|\leq  n C_1$ on $\partial U$.
As $v_n$ is harmonic on $U$, by the maximum principle, the estimate extends to $U$:
 \[ |v_n|\leq  nC_1 \ \text{ on } U.\]
 Hence:
 $$\int_{\Mand_d\setminus\mathcal{H}_n}|v_n|\,\beta \leq nC_1 \textup{Area}_\beta(\Mand_d\setminus\mathcal{H}_n) ~.$$
 Now, let $\Omega$ be a connected component of $\mathcal{H}_n$. Then, by Lemma~\ref{lm:multi}, the function $v_n$
satisfies $dd^c v_n =\delta_{c_\Omega}$ where $c_\Omega$ is the center of the component $\Omega$. Hence using Theorem~\ref{tm:L1estimatedim1} and Lemma~\ref{lm:outside} gives (take $\log |f|=v_n$):
\[\left\|v_n\right\|_{L^1(\Omega)} \leq C\max\left\{\left\|v_n\right\|_{L^\infty(\partial\Omega)},1\right\}\textup{Area}_\beta(\Omega) \leq Cn C_1\textup{Area}_\beta(\Omega)~,\]
where $C$ is a universal constant that depends only on $d$. Summing on all the connected components $\Omega$ of $\mathcal{H}_n$ gives:
\[\int_{\mathcal{H}_n}|v_n| \,\beta\leq Cn C_1\textup{Area}_\beta(\mathcal{H}_n)~.\]
Summing over $\mathcal{H}_n$ and $\Mand_d\setminus\mathcal{H}_n$ ends the proof.  
\end{proof}

\subsection{Proof of Theorem \ref{tm:Mandelbrot}}

Let us now explain how to deduce Theorem \ref{tm:Mandelbrot} from Theorem \ref{lm:L1loc}. 

\begin{proof}[Proof of Theorem \ref{tm:Mandelbrot}]
 Let $n\geq 1$. Recall that $u_n(c)=d^{-n+1}\log|p_c^n(0)|=d^{-n+1} v_n$.
 Let $\varphi\in\mathcal{C}^2_c(\C)$. Then, by Stokes formula:
\begin{eqnarray*}
\frac{1}{d^{n-1}}\sum_{c\in\Per(n)}\varphi(c)-\int_\C\varphi\, \mu_{\Mand_d} & = & \int_\C\varphi\ dd^cu_n-\int_\C\varphi\ dd^cg_{\Mand_d}=\int_\C(u_n-g_{\Mand_d})\ dd^c\varphi~.
\end{eqnarray*}
We cut the integral into two parts
\begin{eqnarray*}
\frac{1}{d^{n-1}}\sum_{c\in\Per(n)}\varphi(c)-\int_\C\varphi\, \mu_{\Mand_d} & = & \int_{\Mand_d} (u_n-g_{\Mand_d})dd^c\varphi+\int_{\C\setminus\Mand_d}(u_n-g_{\Mand_d}) dd^c\varphi~.
\end{eqnarray*}
Now, as $\varphi$ is $\mathcal{C}^2$, we have (up to a constant that depends on the choice of the $\mathcal{C}^2$-norm) that $\|\varphi\|_{\mathcal{C}^2}\omega_{\p^1} \pm dd^c \varphi \geq 0$, where $\omega_{\p^1}$ is the Fubini-Study form on $\p^1$ (normalized so that $\omega_{\p^1}(\p^1)=1$), hence we can write 
\begin{center}
$\displaystyle\left|\frac{1}{d^{n-1}}\sum_{c\in\Per(n)}\varphi(c)-\int_\C\varphi\, \mu_{\Mand_d}\right| \leq \|\varphi\|_{\mathcal{C}^2}\left(\int_{\Mand_d} |u_n-g_{\Mand_d}|\omega_{\p^1}+\int_{\C\setminus\Mand_d}|u_n-g_{\Mand_d}| \omega_{\p^1}\right).$
\end{center}
As $\omega_{\p^1}\leq\beta$, where $\beta$ is the standard hermitian metric on $\C$,
\begin{center}
$\displaystyle\left|\frac{1}{d^{n-1}}\sum_{c\in\Per(n)}\varphi(c)-\int_\C\varphi\, \mu_{\Mand_d}\right|  \leq \|\varphi\|_{\mathcal{C}^2}\left(\int_{\Mand_d} |u_n-g_{\Mand_d}|\beta+\int_{\C\setminus\Mand_d}|u_n-g_{\Mand_d}| \omega_{\p^1}\right).$
\end{center} 
By Lemma ~\ref{lm:outside}, we have the bound:
$$ \int_{\C\setminus\Mand_d}|u_n-g_{\Mand_d}| \omega_{\p^1} \leq  \frac{n}{d^{n-1}} C_1 \int_{\p^1} \omega_{\p^1}=\frac{n}{d^{n-1}} C_1 . $$
Now, by Theorem~\ref{lm:L1loc}, there exists $C>0$ depending only on $d$ such that for any $n\geq1$,
\[\|v_n \|_{L^1(\Mand_d)}\leq C\cdot n\cdot \textup{Area}(\Mand_d)~.\]
Hence
\begin{eqnarray*}
\int_{\Mand_d} |u_n-g_{\Mand_d}|\beta & = & d^{-n+1}\int_{\Mand_d} |v_n|\beta\\
& \leq & \textup{Area}(\Mand_d)\cdot\frac{C' n }{d^n},
\end{eqnarray*}
where $C'$ depends only on $d$. Combining the estimates gives that there exists a constant $C$ that depends only on $d$ such that:
\begin{eqnarray*}
\left|\frac{1}{d^{n-1}}\sum_{c\in\Per(n)}\varphi(c)-\int_\C\varphi\, \mu_{\Mand_d} \right| & \leq & \frac{C n }{d^n}\|\varphi\|_{\mathcal{C}^2}~.
\end{eqnarray*}
This ends the proof. 
\end{proof}

\begin{remark} \rm
\begin{enumerate}
\item Observe that in fact, one can replace in Theorem \ref{tm:Mandelbrot} the norm $\|\varphi\|_{\mathcal{C}^2}$ by the $L^\infty$ norm of $dd^c \varphi$.
\item On the other hand, as the measures 
$$\frac{1}{d^{n-1}}\sum_{c\in\Per(n)} \delta_c \ \mathrm{and} \  \mu_{\Mand_d}$$
all have supports in $\Mand_d$, if $\theta$ denotes a cut-off function equal to $1$ in a neighborhood of $\Mand_d$ then for all $\varphi$:
$$\frac{1}{d^{n-1}}\sum_{c\in\Per(n)}\varphi(c)-\int_\C\varphi\, \mu_{\Mand_d}= \frac{1}{d^{n-1}}\sum_{c\in\Per(n)}\theta(c)\varphi(c)-\int_\C \theta \varphi\, \mu_{\Mand_d}.$$
Then, one easily gets that $\|\theta \varphi\|_{\mathcal{C}^2} \leq A\|\varphi\|_{\mathcal{C}^2(K)}$ where $K=\mathrm{supp}( \theta)$ and $A$ is a constant that depends only on $\theta$. Then we have the estimate, for all $\varphi \in \mathcal{C}^2(\C)$:
$$\left|\frac{1}{d^{n-1}}\sum_{c\in\Per(n)}\varphi(c)-\int_\C\varphi\, \mu_{\Mand_d} \right|  \leq  CA\frac{ n }{d^n}\|\varphi\|_{\mathcal{C}^2(K)}~. $$
\end{enumerate}
\end{remark}

\subsection{An application in the spirit of Theorem~\ref{cor:centers}}
We here want to extend the $\mathcal{C}^1$-estimate to non-postcritically finite but hyperbolic parameters, i.e. parameters $c\in\Mand_d$ for which $p_c$ has a cycle of period $k|n$ and multiplier $w^{k/n}$ where $w\in\D^*$ has been fixed. Let us be more precise:
For any $n\geq1$ and any $w\in\C$, let $R_n(c,w):=\mathrm{Res}_z\left(p_c^n(z)-z,(p_c^n)'(z)=w\right)$ and
\begin{eqnarray*}
\Per(n,w) & := & \{c\in\C \, ; \ R_n(c,w)=0\}~.
\end{eqnarray*}
By the above section \S~\ref{Sec:dynatomic}, we have
\begin{eqnarray*}
\Per(n,w) & = & \{c\in\C \, ; \ \exists z\in\C~, \ p_c^n(z)=z \ \text{and} \ (p_c^n)'(z)=w\}\\
& = & \bigcup_{k|n}\Per^*(k,w^{n/k})~.
\end{eqnarray*}
Our precise statement is the following.

\begin{theorem}
For any integer $n\geq1$, the set $\Per(n,w)$ is a finite set of cardinal $d^{n-1}$ (resp. $(d-1)d^{n-1}$) if $w=0$ (resp. $w\in\D^*$). Moreover, there exists a constant $C>0$ depending only on $d$ such that
$$\left|\frac{1}{d^{n-1}}\sum_{c\in\Per(n,w)}\frac{\varphi(c)}{(d-1)}-\int\varphi\, \mu_{\Mand_d}\right|\leq C\left(\frac{n}{d^n}\right)^{\frac{1}{2}}\max\left\{1,\frac{1}{\log(|w|^{-1})}\right\}^{\frac{1}{2}}\|\varphi\|_{\mathcal{C}^1},$$
 for any $w\in\D^*$, any $\varphi\in\mathcal{C}^1_c(\C)$ and any $n\geq1$.
\label{cor:Mandelbrot}
\end{theorem}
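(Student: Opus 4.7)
The idea is to follow the scheme of the proof of Theorem~\ref{tm:Mandelbrot} for the multiplier-weighted potential
\[u_n(c):=\frac{1}{n(d-1)d^{n-1}}\log|R_n(c,w)|,\]
which satisfies $dd^cu_n=\nu_n:=\frac{1}{(d-1)d^{n-1}}\sum_{c\in\Per(n,w)}\delta_c$, and then to upgrade the resulting $\mathcal{C}^2$-estimate into a $\mathcal{C}^1$-estimate by a standard regularization argument. The cardinality claim comes from a degree computation: for $w=0$, one has $\Per(n,0)=\Per(n)$ (since any super-attracting cycle of $p_c$ must contain the unique critical point $0$), and Lemma~\ref{lm:multi} yields $|\Per(n,0)|=d^{n-1}$ with simple roots; for $w\in\D^*$, combine the decomposition $\Per(n,w)=\bigcup_{k\mid n,\,\rho^{n/k}=w}\Per^*(k,\rho)$ with the fact that the multiplier map $\rho_\Omega:\Omega\to\D$ on any hyperbolic component $\Omega$ is a proper degree $(d-1)$ branched cover (transverse to level sets), giving both simplicity and the total count $(d-1)d^{n-1}$.

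\textbf{$L^1$-estimate.} Set $\delta_n:=(n/d^n)\max\{1,|\log|w||^{-1}\}$. The core step is to prove
\[\|u_n-g_{\Mand_d}\|_{L^1(\C,\beta)}\leq C\,\delta_n.\]
Outside $\Mand_d$, a direct adaptation of Lemma~\ref{lm:outside}, using Lemma~\ref{ineq:green} together with the growth of $|(p_c^n)'|$ on repelling cycles (i.e.\ the Bassanelli--Berteloot approximation of the Lyapunov exponent), yields the pointwise bound $|u_n-g_{\Mand_d}|\leq Cn/d^n$. By the maximum principle, this extends through the non-hyperbolic part of $\Mand_d$ (where $u_n$ is pluriharmonic, since all zeros of $R_n(\cdot,w)$ lie in hyperbolic components of period dividing $n$). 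On each hyperbolic component $\Omega$ of period $k\mid n$, the function $u_n$ has simple logarithmic poles at the preimages under $\rho_\Omega$ of the $n/k$ roots of $Z^{n/k}=w$, so Theorem~\ref{tm:L1estimatedim1} applies (pole by pole) and converts an $L^\infty$ boundary bound into an $L^1$ bulk bound proportional to $\textup{Area}(\Omega)$. The boundary input is here of order $\delta_n$: the new term $(d^n|\log|w||)^{-1}$ reflects the contribution of $\log|\rho_\Omega-\rho|$ to $u_n$ on $\partial\Omega$, where $|\rho_\Omega|=1$. Summing over all hyperbolic components of period dividing $n$ yields the announced $L^1$-bound.

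\textbf{From $\mathcal{C}^2$ to $\mathcal{C}^1$ by mollification.} The $\mathcal{C}^2$-estimate $|\int\varphi\,d(\nu_n-\mu_{\Mand_d})|\leq C\delta_n\|\varphi\|_{\mathcal{C}^2}$ then follows from Stokes' formula exactly as in the proof of Theorem~\ref{tm:Mandelbrot}. To pass to $\mathcal{C}^1$, regularize: for $\varphi\in\mathcal{C}^1_c(\C)$ and $\epsilon>0$, set $\varphi_\epsilon:=\varphi\ast\chi_\epsilon$ with a standard mollifier $\chi_\epsilon$, so that $\|\varphi-\varphi_\epsilon\|_{L^\infty}\leq C\epsilon\|\varphi\|_{\mathcal{C}^1}$ and $\|\varphi_\epsilon\|_{\mathcal{C}^2}\leq C\epsilon^{-1}\|\varphi\|_{\mathcal{C}^1}$. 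Since $\nu_n$ and $\mu_{\Mand_d}$ are probability measures,
\[\left|\int\varphi\,d(\nu_n-\mu_{\Mand_d})\right|\leq 2\|\varphi-\varphi_\epsilon\|_\infty+C\delta_n\|\varphi_\epsilon\|_{\mathcal{C}^2}\leq C\left(\epsilon+\frac{\delta_n}{\epsilon}\right)\|\varphi\|_{\mathcal{C}^1},\]
and optimizing $\epsilon=\sqrt{\delta_n}$ yields the announced rate $\sqrt{\delta_n}$.

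\textbf{Main obstacle.} The crucial new step is the $L^\infty$ control of $u_n$ on the boundaries of hyperbolic components of period $k\mid n$ as $|w|\to 1^{-}$: the preimages under $\rho_\Omega$ of the $n/k$ roots of $w$ cluster on $\partial\Omega$, and quantifying this clustering in a way compatible with Theorem~\ref{tm:L1estimatedim1} is precisely what produces the factor $\max\{1,|\log|w||^{-1}\}$. This is the only genuinely new ingredient compared with the proof of Theorem~\ref{tm:Mandelbrot}.
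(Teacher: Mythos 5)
Your argument does not establish the theorem: the decisive estimates are either asserted or set up incorrectly. First, the identity $dd^cu_n=\nu_n$ fails. Writing $R_n(c,w)$ (up to a constant) as $\prod_{p_c^n(\alpha)=\alpha}\bigl((p_c^n)'(\alpha)-w\bigr)$, a parameter whose attracting cycle has exact period $k|n$ with $k<n$ is a zero of $R_n(\cdot,w)$ of multiplicity $k$, not $n$, so $\frac1n dd^c\log|R_n(\cdot,w)|$ is a \emph{weighted} sum over $\Per(n,w)$ rather than the uniform measure; this multiplicity problem is exactly why the paper introduces the dynatomic resultants $p_n$ of \S\ref{Sec:dynatomic}. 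Second, the pointwise bound $|u_n-g_{\Mand_d}|\leq Cn/d^n$ outside $\Mand_d$ is not a ``direct adaptation'' of Lemma~\ref{lm:outside}: that lemma controls the single critical orbit $p_c^n(0)$, whereas $\log|R_n(\cdot,w)|$ sums $\log|(p_c^n)'(\alpha)-w|$ over all $d^n$ periodic points, i.e.\ it is a \emph{quantitative} form of the Bassanelli--Berteloot approximation of the Lyapunov exponent, for which no speed is available in this paper; worse, with your normalization $u_n-g_{\Mand_d}$ tends off $\Mand_d$ to the nonzero constant $\frac{d\log d}{d-1}$ (the $\log d$ term in $L(c)=\log d+(d-1)g_c(0)$), so the stated inequality cannot hold as written. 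Third, the step you yourself call the ``main obstacle'' is precisely the theorem's content and is not proved: on $\partial\Mand_d$ and on boundaries of hyperbolic components the only easy lower bound on the factors is $|(p_c^n)'(\alpha)-w|\geq 1-|w|$, which after normalization yields only $O\bigl(n^{-1}\log\frac{1}{1-|w|}\bigr)$, far weaker than the required $\frac{n}{d^n}\max\{1,|\log|w||^{-1}\}$; moreover Theorem~\ref{tm:L1estimatedim1} is stated for a single logarithmic pole, so its ``pole by pole'' use on components containing up to $(d-1)n/k$ zeros needs an argument you do not give.

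The paper avoids all of this by never writing a potential for $\Per(n,w)$. After the cardinality count, it compares $\mu_{n,w}$ with the centers measure $\mu_n$ geometrically: each point of $\Per(n,w)$ lies in the hyperbolic component $\Omega_{c_0}$ of a unique center $c_0\in\Per(n)$, and $|\varphi(c_0)-\varphi(c_w^j)|\leq\|\varphi\|_{\mathcal{C}^1}\,\textup{Diam}_\beta(\Omega_{c_0}')$, where $\Omega_{c_0}'$ is the sublevel set of the multiplier map; Cauchy--Schwarz together with the Briend--Duval length--area Lemma~\ref{lm:BriendDuval}, using $\textup{mod}(\Omega_{c_0}\setminus\overline{\Omega_{c_0}'})\geq\frac{-\log|w|}{2\pi n(d-1)}$ and the fact that the disjoint components have total area at most $\textup{Area}_\beta(\D(0,2))$, produces the factor $d^{-n/2}\max\{1,n/\log(|w|^{-1})\}^{1/2}$ directly, with no $L^\infty$ control of any potential near component boundaries; finally $\mu_n$ is compared with $\mu_{\Mand_d}$ via Theorem~\ref{tm:Mandelbrot} and the same interpolation argument you invoke. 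Your mollification step and the counting are fine, but without a correct normalization of the potential and an actual proof of the boundary estimate, the main inequality remains unproved.
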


The proof of Theorem~\ref{cor:Mandelbrot} uses Lemma~\ref{lm:BriendDuval}, i.e. the length-area estimates of Briend and Duval~\cite{briendduval} in a crucial way. Notice that we may instead use Koebe Distortion estimates, but we again want to present the proof of Theorem~\ref{cor:Mandelbrot} as a toy model for that of Theorem \ref{cor:centers}. 

Notice that, though this estimate looks weaker than the one obtained in Theorem~\ref{cor:centers}, it is actually more general. Indeed, the set $\Per(n,w)$ as we defined it here is the set of parameters $c$ for which there exists a cycle of period \emph{dividing} $n$ and multiplier \emph{a root of} $w$. Hence, this is a much bigger set than the one involved in Theorem~\ref{cor:centers}.

 In general, the set $\Per(n,w)$ with $w\in\C$ is finite and has cardinal at most $(d-1)d^{n-1}$, see e.g.~\cite{Silverman}. 

\begin{proof} Let us first prove that $\Per(n,w)$ is finite and determine its cardinal. If $c\in\Per(n,w)$, then $p_c$ has an attracting cycle of exact period $k|n$ and multiplier $w^{k/n}\in\D$. But the set of parameters $c$ admitting a $k$-cycle of multiplier $t\in\D$ is finite and its cardinal is $d_{k-1}$ if $t=0$ and $(d-1)d_{k-1}$ otherwise. In particular, $\Per(n,w)$ is finite and has cardinal $\sum_{k|n}d_{k-1}=d^{n-1}$ if $w=0$ and $\sum_{k|n}(d-1)d_{k-1}=(d-1)d^{n-1}$ otherwise (recall that $d_n$ was defined in the introduction).

Pick $\varphi\in\mathcal{C}^1_c(\C)$ and let
\[\mu_n:=\frac{1}{d^n}\sum_{c\in\Per(n)}\delta_c \ \and \ \mu_{n,w}:=\frac{1}{(d-1)d^n}\sum_{c\in\Per(n,w)}\delta_c~.\]
Recall that we want to estimate $|\langle\mu_{n,w},\varphi\rangle-\langle\mu_{\Mand_d},\varphi\rangle|$. Let us remark that, by a classical interpolation argument, Theorem~\ref{tm:Mandelbrot} gives
\[|\langle\mu_n,\varphi\rangle-\langle\mu_{\Mand_d},\varphi\rangle|\leq C_1 \left(\frac{n}{d^n}\right)^{1/2}\|\varphi\|_{\mathcal{C}^1}~,\]
where $C_1$ depends only on $d$. Pick $w\in\D^*$ and $n\geq1$. We are left with estimating $|\langle\mu_n,\varphi\rangle-\langle\mu_{n,w},\varphi\rangle|$. For any $c_0\in\Per(n)$, let us denote by $\Omega_{c_0}$ the hyperbolic component containing $c_0$ and let $c_w^1,\ldots,c_w^{d-1}$ be the $d-1$ parameters $c_w^j\in\Omega_{c_0}\cap\Per(n,w)$. Let also $\Omega_{c_0}'$ be the open set 
\[\Omega_{c_0}':=\bigcap_{|t|<|w|}\Per(n,t)\cap \Omega_{c_0}\Subset\Omega_{c_0}~.\]
By Cauchy-Schwarz inequality,
\begin{eqnarray*}
|\langle\mu_n,\varphi\rangle-\langle\mu_{n,w},\varphi\rangle| & \leq & \frac{1}{(d-1)d^n}\sum_{c_0\in\Per(n)}\sum_{j=1}^{d-1}|\varphi(c_0)-\varphi(c_w^j)|\\
& \leq & \frac{1}{d^n}\sum_{c_0\in\Per(n)}\|\varphi\|_{\mathcal{C}^1}\textup{Diam}_\beta\left(\Omega_{c_0}'\right)\\
& \leq & \frac{1}{d^{n/2}}\|\varphi\|_{\mathcal{C}^1}\left(\sum_{c_0\in\Per(n)}\left(\textup{Diam}_\beta(\Omega_{c_0}')\right)^2\right)^{1/2}
\end{eqnarray*}
Recall that $\Per(n)=\bigcup_{k|n}\Per^*(k)$ where $\Per^*(k)$ denote the set of parameters admitting a super-attracting cycle of exact period $k$. Moreover, if $k|n$ and $c_0\in\Per^*(k)$,
\[\Omega_{c_0}'=\rho^{-1}\left(\D(0,|w|^{k/n})\right)~,\]
where $\rho:\Omega_{c_0}\longrightarrow\D$ is the map which, to $c$, associates the multiplier of the attracting cycle of $p_c$. Recall that $\rho$ is a $(d-1)$-branched cover ramifying exactly at $c_0$. In particular, 
\[\textup{mod}\left(\D\setminus\overline{\D(0,|w|^{k/n})}\right)=(d-1)\cdot\textup{mod}(\Omega_{c_0}\setminus\overline{\Omega_{c_0}'})~,\]
whence $\textup{mod}(\Omega_{c_0}\setminus\overline{\Omega_{c_0}'})=\frac{-k}{2\pi n(d-1)}\log|w|\geq \frac{-1}{2\pi n(d-1)}\log|w|$. By Lemma~\ref{lm:BriendDuval}, we deduce
\begin{eqnarray*}
|\langle\mu_n,\varphi\rangle-\langle\mu_{n,w},\varphi\rangle| & \leq & \frac{1}{d^{n/2}}\|\varphi\|_{\mathcal{C}^1}\left(\sum_{c_0\in\Per(n)}\frac{\tau}{\min(1,\frac{-1}{2\pi n(d-1)}\log|w|)}\textup{Area}_\beta(\Omega_{c_0})\right)^{1/2}.
\end{eqnarray*}
On the other hand, since $\Omega_{c_0}\cap\Omega_{c_0'}=\emptyset$ for $c_0\neq c_0'\in\Per(n)$ and $\Omega_{c_0}\subset\Mand_d\subset\overline{\D(0,2)}$, 
\[\sum_{\Per(n)}\textup{Area}_\beta(\Omega_{c_0})\leq \textup{Area}_\beta(\D(0,2))= 4\pi~.\]
Combined with the above, this gives
\begin{eqnarray*}
|\langle\mu_n,\varphi\rangle-\langle\mu_{n,w},\varphi\rangle| & \leq & \frac{2\sqrt{\pi\tau}}{d^{n/2}}\|\varphi\|_{\mathcal{C}^1}\max\left\{1,\frac{2\pi n(d-1)}{\log(|w|^{-1})}\right\}^{1/2},
\end{eqnarray*}
which, letting $C:=C_1+2\pi\sqrt{2(d-1)\tau}$, finally gives the wanted result.
\end{proof}

\section{Initial estimates in the moduli space of polynomials}\label{sec:outside}

We aim, here, at giving estimates in the spirit of the one provided by Lemma~\ref{lm:outside}. We begin the section with preliminaries on the Moebius function and dynatomic polynomials $P_{m,j}$. We then give estimates using Lemma~\ref{lm:Przytycki} for the potentials of the currents 
$[\Per^*_j(m)]=dd^c \log |P_{m,j}| $  (renormalized by their mass) at suitable parameters.

\subsection{Basics}\label{Sec:basics}

Let $d\geq3$. Recall that, for $0\leq j\leq d-2$ and $m\geq1$, we defined in \S~\ref{Sec:dynatomic} the polynomials $P_{m,j}$ of period $m$ for the critical point $c_j$ by
 $$P_{m,j}(c,a):=\prod_{k|m}\left(P_{c,a}^k(c_j)-c_j\right)^{\mu(m/k)}~,$$
where $\mu$ stands for the Moebius function, and that $P_{m,j}(c,a)=0$ if and only if $c_j$ is periodic under iteration of $P_{c,a}$ with exact period $m$. The degree of $P_{m,j}$ is equivalent to $d^{m}$. It does not depend on $j$ so we denote it by $d_m$ as we have the formula:
\begin{equation}\label{degree}
 d_m= \sum_{k|m} \mu(m/k) d^k=d^m + \sum_{k|m, \ k\neq m} \mu(m/k) d^k.
\end{equation}
Recall that the function $\sigma:\mathbb{N}^*\to \mathbb{N}^*$ is the \emph{sum of divisors}: 
We also let $\Per^*_j(m)$ be the algebraic variety
$$\Per^*_j(m):=\{(c,a)\in\C^{d-1} \, ; \ P_{m,j}(c,a)=0\}$$
defined by $(c,a)\in\Per^*_j(m)$ if and only if the critical point $c_j$ is periodic under iteration of $P_{c,a}$ with exact period $m$. We shall use in the sequel the following result (see \cite[Theorem 6.1]{favregauthier}).

\begin{theorem}[Favre-Gauthier]\label{tm:transverseFG}
Let $\underline{m}=(m_0,\ldots,m_{d-2})$ be a $(d-1)$-tuple of pairwise distinct positive integers such that $m_0\geq2$. If the hypersurfaces $\{\Per^*_j(m_j)\}_{0\leq j\leq d-2}$ intersect at $(c,a)\in\C^{d-1}$, their intersection at $(c,a)$ is smooth and transverse.
\end{theorem}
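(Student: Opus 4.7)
The plan is to invoke Epstein's transversality theory for critical orbit relations from~\cite{epstein2}, in the adapted polynomial form worked out in~\cite[\S 6]{favregauthier}. The starting observation is that any point $(c,a)$ in the intersection is a postcritically finite hyperbolic parameter: for each $j$, the critical point $c_j$ lies in a superattracting cycle of exact period $m_j$. Since the periods $m_j$ are pairwise distinct, these $d-1$ superattracting cycles are pairwise disjoint in $\p^1$, as are their immediate attracting basins. This disjointness is the geometric input that will make the quadratic differentials decouple.

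The first step is to show each hypersurface $\Per^*_j(m_j)$ is smooth at $(c,a)$, i.e., that $dP_{m_j,j}(c,a)\neq 0$. Via the implicit function theorem this reduces to producing a one-parameter deformation of $(c,a)$ in $\mathcal{OP}_d$ along which the multiplier of the cycle containing $c_j$ moves off $0$. Such a deformation is supplied by Epstein's pairing: one pairs the infinitesimal parameter direction with a meromorphic quadratic differential on $\p^1$ supported in the grand orbit of the immediate basin of the $c_j$-cycle, with a simple pole at the periodic point through which $c_j$ passes. Non-triviality of the pairing then gives non-vanishing of $dP_{m_j,j}$.

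The second and main step is the transversality itself, namely that $\{dP_{m_j,j}(c,a)\}_{0\leq j\leq d-2}$ are linearly independent in $T^*_{(c,a)}\C^{d-1}$. A nontrivial relation $\sum_j \alpha_j dP_{m_j,j}(c,a)=0$ translates, under Epstein's pairing, into a meromorphic quadratic differential $q$ on $\p^1$ that splits as $q=\sum_j \alpha_j q_j$, with each $q_j$ supported in the grand orbit of the basin of the $j$-th superattracting cycle, and such that $q$ is the ``invariant transfer'' of itself under $P_{c,a}$. Because the $d-1$ basins are pairwise disjoint, this $f$-invariance forces each piece $\alpha_j q_j$ to satisfy the invariance equation separately; a Thurston-type contraction argument in the space of integrable quadratic differentials on each basin then yields $\alpha_j q_j\equiv 0$ and hence $\alpha_j=0$. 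The hypothesis $m_0\geq 2$ enters here to rule out the degenerate configuration of a fixed critical point, where the quadratic differential would sit at a single fixed point and the pairing would require a separate treatment.

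The main obstacle is of course Epstein's deformation-theoretic machinery itself: the construction of the pairing between infinitesimal parameter deformations and meromorphic quadratic differentials, the identification of its kernel with holomorphic vector fields, the verification that the orbifold cover $\C^{d-1}\to\mathcal{P}_d$ does not interfere (one checks that $(c,a)$ avoids its ramification locus, using again that the critical cycles are disjoint), and the Thurston rigidity input that drives the vanishing. All of this is carried out in~\cite{epstein2} and translated to the polynomial setting in~\cite[\S 6]{favregauthier}, so I would quote that statement directly rather than reprove it.
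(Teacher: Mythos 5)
Your proposal is in line with the paper, which does not reprove this statement either: it simply quotes it as \cite[Theorem 6.1]{favregauthier}, whose proof is precisely the Epstein-style quadratic-differential transversality argument you sketch. Since you too ultimately defer to that reference, the approach is essentially the same.
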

Let $\mathcal{H}_{\un}$ be the open set of hyperbolic parameters $(c,a)$ for which, for all $j$, the critical point $c_j$ is in the immediate basin of an attracting cycle of exact period $n_j$. For $(c_c,a_c)\in \cap_j \Per^*_j(n_j)$, we let $\Omega_{c_c,a_c}$ be the connected component of the interior of $\mathcal{C}_d$ that contains $(c_c,a_c)$ which is \emph{the center of the component $\Omega_{c_c,a_c}$}. In the case where all $n_j$ are distinct, it is known that the set $\Omega_{c_c,a_c}$ is simply connected and contains only one postcritically finite parameter, the parameter $(c_c,a_c)$. Notice that, according to Theorem \ref{tm:transverseFG} and to B\'ezout Theorem, the set $\mathcal{H}_{\un}$ has $\textup{Card}(\bigcap_j\Per^*_j(n_j))=\prod_jd_{n_j}$ distinct hyperbolic components.

\subsection{Przytycki's estimates in the space of polynomials}\label{sec:Przytycki}
We first give estimates for the functions $d^{-n}\log|P_{c,a}^{n}(c_j)-c_j|$ that will enable us to deal with the dynatomic polynomials. The two next lemmas are consequences of \cite[\S 6]{favredujardin} and of Lemma~\ref{lm:Przytycki}. These two lemmas intend to play the role, in the present setting, of Lemma~\ref{lm:outside}. Since the parameter space is now several dimensional, the proofs are more elaborate than in the unicritical family.

A few explanations are in order. In turn, lemma~\ref{magic_eq1} says that the growth of the function $d^{-n}\log|P_{c,a}^{n}(c_j)-c_j|$ is bounded above by the escape rate of the critical point $c_j$, up to exponential error term, at least when $c_j$ is the fastest escaping critical point. These estimates follow directly from classical estimates (see~\cite[S 6]{favredujardin}). To be more precise, we use the following two lemmas in the sequel (see \cite[Lemmas 6.4 $\&$ 6.5]{favredujardin}).

\begin{lemma}\label{lm:DF6.4}
There exists a constant $C>0$ depending only on $d$\footnote{observe that the statement of the lemma is incorrectly stated in \cite{favredujardin}, and the constant $C$ is actually \emph{independent} on $(c,a)$.} such that 
\[g_{c,a}(z)\leq \log^+\max\{|z|,|c|,|a|\}+C\]
for all $z\in\C$ and all $(c,a)\in\C^{d-1}$.
\end{lemma}

\begin{lemma}\label{lm:DF6.5}
For all $z\in\C$ and all $(c,a)\in\C^{d-1}$, we have
\[\max\{g_{c,a}(z),G(c,a)\}\geq \log\left|z-\delta\right|-\log4,\]
where $\delta=\sum_{j=1}^{d-2}c_j/(d-1)$.
\end{lemma}

The main difficulty is to get a \emph{uniform} constant in the error term.

On the other hand, lemma~\ref{magic_eq2} focuses on bounding from below the growth of $d^{-n}\log|P_{c,a}^{n}(c_j)-c_j|$ by the escape rate of the critical point $c_j$. Though the bound from below can not hold in full generality when $(c,a)\in\mathcal{C}_d$ (indeed, the critical point could be periodic for example), we manage to derive the wanted estimate when the critical point is active from Lemma~\ref{lm:Przytycki}.

\begin{lemma}\label{magic_eq1}
There exists a constant $C_1>0$ depending only on $d$ such that for any $(c,a)\in\C^{d-1}$ and any $j$ such that $g_{c,a}(c_j)=G(c,a)$, for any $n\geq1$, then
	\begin{equation*}
	\frac{1}{d^{n}}\log|P_{c,a}^{n}(c_j)-c_j|- g_{c,a}(c_j) \leq C_1 \frac{1}{d^{n}} ~.
	\end{equation*}
\end{lemma}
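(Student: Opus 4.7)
The plan is to prove the sharper uniform bound
\[\log^+|P_{c,a}^n(c_j)| \leq d^n g_{c,a}(c_j) + C_0,\]
for a constant $C_0$ depending only on $d$, and to deduce the lemma from it. Granting this, the triangle inequality $|P_{c,a}^n(c_j) - c_j| \leq |P_{c,a}^n(c_j)| + |c_j|$ gives
\[\log|P_{c,a}^n(c_j) - c_j| \leq \log 2 + \max\bigl(\log^+|P_{c,a}^n(c_j)|, \log^+|c_j|\bigr).\]
The chain $\log^+|c_j| \leq \log^+|c| \leq \log^+\max(|c|,|a|) \leq G(c,a) + O(1) = g_{c,a}(c_j) + O(1)$ (using $|c|=\max_i|c_i|$, the classical asymptotic $G(c,a)=\log^+\max(|c|,|a|)+O(1)$, and the hypothesis $g_{c,a}(c_j)=G(c,a)$) combined with $d^n g_{c,a}(c_j) \geq g_{c,a}(c_j)$ for $n \geq 1$ then yields the lemma after dividing by $d^n$.

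The central task is therefore the displayed upper bound. Lemma~\ref{ineq:green} gives only the weaker $\log^+|P_{c,a}^n(c_j)| \leq d^n g_{c,a}(c_j) + \log^+\max(|c|,|a|) + C$, with an extra $\log^+\max(|c|,|a|)$ that is not uniformly bounded. To eliminate it, I would split by the size of $G(c,a)$. If $G(c,a) \leq K$ for a suitable universal threshold $K$, then $\log^+\max(|c|,|a|)\leq K+O(1)$ is uniformly bounded and Lemma~\ref{ineq:green} suffices directly. If $G(c,a) > K$ with $K$ large enough, then Lemma~\ref{ineq:green} applied at $n=1$ forces $|P_{c,a}(c_j)| \geq R_1\max(1,|c|,|a|)$ for any prescribed $R_1$ (here the fact $d \geq 3$ combined with $g_{c,a}(c_j) = G(c,a)$ gives $(d-2)G(c,a) > \log R_1 + O(1)$). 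Once the orbit has entered this escape region, the asymptotic expansion $P_{c,a}(z) = z^d/d + O(\max(|c|,|a|)\,|z|^{d-1})$ for $|z|$ large compared to $\max(|c|,|a|)$, combined with a telescoping argument over the iterates, should yield the uniform estimate
\[\bigl|\log|P_{c,a}^n(c_j)| - d^n g_{c,a}(c_j)\bigr| \leq C'\]
with $C'$ depending only on $d$, from which the displayed bound follows.

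The hard step will be the telescoping argument in the escape regime: making the dynamical error independent of $(c,a)$. The key observation is that, once $|P_{c,a}^k(c_j)| \geq R_1\max(1,|c|,|a|)$, the orbit grows super-exponentially in $k$, so the one-step correction $\log|P_{c,a}^{k+1}(c_j)| - d\log|P_{c,a}^k(c_j)| = -\log d + O(\max(|c|,|a|)/|P_{c,a}^k(c_j)|)$ has a remainder decaying much faster than geometrically in $k$. Summed with the weights $d^{-k-1}$ inherent in the definition of $g_{c,a}$, the telescoping tail contributes a universal $O(1)$, giving the uniform error $C'$. The hypothesis $g_{c,a}(c_j)=G(c,a)$ is used crucially in two places: to absorb $\log^+|c_j|$ into $g_{c,a}(c_j)+O(1)$ in the reduction step, and to guarantee that $c_j$ enters the escape region in a bounded number of iterations when $G(c,a)$ is large.
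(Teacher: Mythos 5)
Your proposal is correct, but it is organized rather differently from the paper's proof. You reduce the lemma to a single uniform inequality $\log^+|P_{c,a}^n(c_j)|\leq d^n g_{c,a}(c_j)+C_0$ and prove it by a two-case split on $G(c,a)$: when $G\leq K$ the error term $\log^+\max\{|c|,|a|\}$ in Lemma~\ref{ineq:green} is uniformly bounded, and when $G>K$ you use $d\geq 3$ together with the hypothesis $g_{c,a}(c_j)=G(c,a)$ to push $c_j$ into the escape region $\{|z|\geq R_1\max(1,|c|,|a|)\}$ after one iterate, where a B\"ottcher-type telescoping of $\log|P_{c,a}(z)|=d\log|z|-\log d+O\bigl(\max(1,|c|,|a|)/|z|\bigr)$ yields $\bigl|\log|P_{c,a}^n(c_j)|-d^n g_{c,a}(c_j)\bigr|\leq C'$. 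The paper instead treats three cases: $(c,a)\in\mathcal{C}_d$ is handled by compactness of the connectedness locus together with the escape criterion $|P_{c,a}(z)|\geq\frac12|z|^2$ for $|z|\geq 2d^2\max\{|c|,|a|\}$; the regime $G\geq R_0$ uses the invariance $g_{c,a}(c_j)=d^{-n}g_{c,a}(P_{c,a}^n(c_j))$ combined with the pointwise comparison of \cite[Lemma 6.5]{favredujardin}; and the regime $0<G\leq R_0$ uses a dichotomy on $|P_{c,a}^n(c_j)|$ versus $\frac12|c_j|$ plus Lemma~\ref{ineq:green}. Your route is more self-contained (it rederives the needed Dujardin--Favre-type comparison by hand in the escape region rather than citing it), it absorbs the connectedness-locus case without invoking compactness of $\mathcal{C}_d$, and it gives the slightly stronger uniform control of $\log^+|P_{c,a}^n(c_j)|$; the price is that the telescoping step you only sketch must be written out (expansion of $P_{c,a}$ in the escape region, forward invariance of that region, summation of the one-step errors against the weights $d^{-k-1}$), all of which is standard and works as you describe. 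Both arguments use $d\geq3$ at the same spot, namely to force fast escape when $G$ is large, consistently with the standing assumption of the section.
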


\begin{proof}
The proof breaks in two distinct parts: we first treat the case $G(c,a)=0$ and, in a second time, we focus on the case when $G(c,a)>0$ and $g_{c,a}(c_j)=G(c,a)$.

Assume first that $(c,a)\in \mathcal{C}_d$, i.e. $G(c,a)=0$. Then for any $j$, $g_{c,a}(c_j)=0=G(c,a)$.
Recall that $|P_{c,a}(z)|\geq \frac{1}{2} |z|^2$ as soon as $|z|\geq 2d^2\max \{|c|,|a|,1\}$ (see e.g. \cite{Ingram}). In particular, if $|P_{c,a}^n(c_j)|\geq 2d^2 \max \left\{|c|,|a|,2\right\}$ for some $n\geq1$ then $|P_{c,a}^{n+1}(c_j)| > 2|P_{c,a}^{n}(c_j)|$ and $|P_{c,a}^{n+1}(c_j)|\geq 2d^2 \max \left\{|c|,|a|,2\right\}$. Iterating the argument shows that the sequence $(P^m_{c,a}(c_j))_m$ diverges so $(c,a)\notin \mathcal{C}_d$. We deduce that for all $(c,a)\in \mathcal{C}_d$ and all $n\geq1$:
$$ \ |P^n_{c,a}(c_j)-c_j|\leq |P^n_{c,a}(c_j)|+|c_j|\leq 4d^2 \max_{(c,a)\in \partial\mathcal{C}_d}\left\{|c|,|a|,2\right\}. $$

In particular, there exists a constant $C$, independent of $n$ such that for all $(c,a)\in \mathcal{C}_d$ and all $n\geq1$:
\[\frac{1}{d^{n}} \log \left |P_{c,a}^{n}(c_j)-c_j\right| \leq \frac{C}{d^{n}}~. \]
Since $g_{c,a}(c_j)=0$ in $\mathcal{C}_d$, the lemma follows in the case where $(c,a)\in \mathcal{C}_d$. 

~

We now consider the case where $(c,a) \in \C^{d-1}\setminus \mathcal{C}_d$ and pick $R>0$ such that $(c,a)\in \{G=R\}$ and $g_{c,a}(c_j)=G(c,a)=R$. 

 According to Lemma~\ref{ineq:green}, since $G(c,a)=\log^+\max\left\{|c|,|a|\right\}+O(1)$, there exist $R_0>0$ such that for $R\geq R_0$, we can assume that for all $n\geq1$ : 
$$ |P^n_{c,a}(c_j)|\geq 2\max\{|c|,|a|\}\geq2|c_j|\geq 1.$$
Assume first that $R\geq R_0$. Hence:
\[\frac{1}{2} |P^{n}_{c,a}(c_j)| \leq |P^{n}_{c,a}(c_j) -c_j| \leq \frac{3}{2} |P^{n}_{c,a}(c_j)|\]
and if $\delta=(d-1)^{-1}\sum_lc_l$, then
\[\frac{1}{2} |P^{n}_{c,a}(c_j)| \leq |P^{n}_{c,a}(c_j) -\delta|~.\]
Taking the logarithm of the above inequality, and dividing it by $d^{n}$, and using $\log|P^{n}_{c,a}(c_j)|=\log^+|P^{n}_{c,a}(c_j)|$, we find
\[\frac{1}{d^n}\log^+|P^{n}_{c,a}(c_j)|  -\frac{\log{2}}{d^{n}} \leq \frac{1}{d^{n}}\log^+|P^{n}_{c,a}(c_j) -c_j| \leq \frac{1}{d^{n}}\log^+|P^{n}_{c,a}(c_j)| +\frac{\log{\frac{3}{2}}}{d^{n}}.\]
By invariance, $g_{c,a}(c_j)  =  d^{-n}g_{c,a}(P_{c,a}^{n}(c_j))=G(c,a)$ so that $\max \{ g_{c,a}(P_{c,a}^{n}(c_j)), G(c,a) \}=g_{c,a}(P_{c,a}^{n}(c_j))$. Hence, by \cite[Lemma 6.5]{favredujardin} (see Lemma~\ref{lm:DF6.5}), this gives a constant $C''>0$ depending only on $d$ and $R_0$ such that
\begin{eqnarray*}
g_{c,a}(c_j) & = & d^{-n}g_{c,a}(P_{c,a}^{n}(c_j))\geq d^{-n}\left(\log|P_{c,a}^{n}(c_j)-\delta |-\log4\right)\\
& \geq & d^{-n}\left(\log|P_{c,a}^{n}(c_j)-c_j|-C''\right).
\end{eqnarray*}
Assume now that $0<R< R_0$. We treat two cases separately. Assume first that  $ |P^n_{c,a}(c_j)|\leq 1/2|c_j|$ so that $   |P^{n}_{c,a}(c_j) -c_j| \leq 3/2 |c_j|$. 
In particular, taking the logarithm of the above inequality, dividing it by $d^{n}$ and using that $\log \leq \log^+$, this implies:
$$\frac{1}{d^{n}} \log | P^{n}_{c,a}(c_j) -c_j| \leq \frac{1}{d^{n}} \log | c_j| + \frac{\log{\frac{3}{2}}}{d^{n}} \leq \frac{1}{d^{n}} G(c,a) + \frac{C''}{d^{n}} $$
where $C''>0$ is a constant that does not depend on $(c,a)$ nor $n$ and where we used that $G(c,a)=\log^+\max\{|c|,|a|\}+O(1)$. In particular, since $G\geq0$ and $n\geq0$,
$$\frac{1}{d^{n}} \log | P^{n}_{c,a}(c_j) -c_j| \leq g_{c,a}(c_j) + \frac{C''}{d^{n}}.$$
Assume now that $ |P^n_{c,a}(c_j)|\geq 1/2|c_j|$ so that $|P^{n}_{c,a}(c_j) -c_j| \leq 3/2 |P^{n}_{c,a}(c_j)|$. Proceeding as above gives:
$$\frac{1}{d^{n}}\log |P^{n}_{c,a}(c_j) -c_j| \leq \frac{1}{d^{n}}\log^+|P^{n}_{c,a}(c_j)| +\frac{\log{\frac{3}{2}}}{d^{n}}.$$
Using Lemma~\ref{ineq:green} and $G(c,a)=R<R_0$ implies that there is a constant that depends only on $d$ and $R_0$, but neither on $(c,a)$ nor on $n$ that we still denote $C''$ such that:
$$\frac{1}{d^{n}}\log |P^{n}_{c,a}(c_j) -c_j| \leq g_{c,a}(c_j) +\frac{C''}{d^{n}}.$$
This concludes the proof in the case where $0<R< R_0$. 
\end{proof}

The second lemma we will need is the following.
\begin{lemma}\label{magic_eq2}
Let $d\geq3$. Then there exists a constant $C_2\geq C_1>0$ depending only on $d$ such that for any $(c,a)\in \C^{d-1}$ and any $0\leq j\leq d-2$ with either $g_{c,a}(c_j)=G(c,a)>0$ or $(c,a)\in \mathcal{C}_d\cap \mathrm{supp}(T_j)$ and any $n\geq1$, we have
	\begin{equation*}
	\frac{1}{d^{n}}\log|P_{c,a}^{n}(c_j)-c_j|- g_{c,a}(c_j) \geq - C_1 \frac{n}{d^{n}} ~.
	\end{equation*}
\end{lemma}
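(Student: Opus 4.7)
The plan is to separate the two alternative hypotheses and combine Lemma~\ref{lm:Przytycki} (Przytycki's dynamical estimate) with the Green-function growth bound of Lemma~\ref{ineq:green}.

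\emph{Case A: $(c,a)\in\mathcal{C}_d\cap\textup{supp}(T_j)$.} Here $g_{c,a}(c_j)=0$, so the desired inequality reduces to $\log|P^n_{c,a}(c_j)-c_j|\geq -C_1 n$. Since $c_j$ is active at $(c,a)$, it is not persistently in an attracting basin (attracting basins are open in parameter space) nor in a parabolic basin (the continuity argument from the proof of Lemma~\ref{lm:Przytycki} applies verbatim). Lemma~\ref{lm:Przytycki} therefore yields $d_{\p^1}(P^n_{c,a}(c_j),c_j)\geq \kappa M(c,a)^{-n}$. Since $\mathcal{C}_d$ is compact, $M_0:=\sup_{\mathcal{C}_d}M<\infty$, and on $\mathcal{C}_d$ the orbit of $c_j$ stays in a bounded region of $\C$ where the spherical and Euclidean metrics are comparable. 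We deduce $|P^n_{c,a}(c_j)-c_j|\geq \kappa' M_0^{-n}$ for a uniform $\kappa'>0$, and dividing the logarithm by $d^n$ gives the bound for any $C_1\geq \log M_0+|\log\kappa'|$.

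\emph{Case B: $R:=g_{c,a}(c_j)=G(c,a)>0$.} By Lemma~\ref{ineq:green} combined with the uniform asymptotic $G=\log^+\max\{|c|,|a|\}+O(1)$, one gets both
\[\log|P^n_{c,a}(c_j)|\geq d^n R - R - C_0 - C\quad\text{and}\quad |c_j|\leq e^{R+C_0}.\]
When $|P^n_{c,a}(c_j)|\geq 2|c_j|$, the bound $|P^n_{c,a}(c_j)-c_j|\geq \tfrac12|P^n_{c,a}(c_j)|$ yields
\[\frac{1}{d^n}\log|P^n_{c,a}(c_j)-c_j| - R \geq -\frac{R+C_0+C+\log 2}{d^n},\]
which is $\geq -C_1 n/d^n$ as soon as $C_1$ is large enough, at least for $R$ in any bounded range. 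For very large $R$, a sharper asymptotic of the Green function near infinity (via the B\"ottcher coordinate) yields $\log|P^n_{c,a}(c_j)|=d^n R+O(1)$ with implied constants independent of $(c,a)$, so the error drops to $O(1/d^n)$ and the same conclusion holds.

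The delicate remaining regime is $|P^n_{c,a}(c_j)|<2|c_j|$. Combining the two displayed inequalities forces $(d^n-2)R\leq 2C_0+C+\log 2$, hence $R\leq O(1/d^n)$ and $d^n R=O(1)$; in particular $(c,a)$ lies just outside $\{g_{\cdot}(c_j)=0\}$ and $c_j$ sits exponentially close to $\mathcal{J}_{c,a}$ in the dynamical plane. Applying the Claim from the proof of Lemma~\ref{lm:Przytycki} directly to $(c,a)$ yields $d_{\p^1}(P^n_{c,a}(c_j),c_j)\geq \kappa M(c,a)^{-n}$, after which the spherical-to-Euclidean comparison concludes as in Case A. This last regime is the main obstacle: the na\"ive estimates from Lemma~\ref{ineq:green} become loose whenever $|P^n_{c,a}(c_j)|$ fails to dominate $|c_j|$, and the proof must exploit the smallness of $R$ to invoke a Przytycki-type argument relying on the proximity of $c_j$ to the Julia set.
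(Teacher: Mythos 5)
Your Case A and the large-$R$ part of Case B follow the paper, but there is a genuine gap in the delicate regime you isolate at the end (and it also leaks into your first subcase: $|P^n_{c,a}(c_j)|\geq 2|c_j|$ does not prevent $|P^n_{c,a}(c_j)|<1$, e.g.\ for $c_0=0$, and then Lemma~\ref{ineq:green} only controls $\log^+|P^n_{c,a}(c_j)|$, not $\log|P^n_{c,a}(c_j)|$, so your displayed lower bound is not justified there either). The problem is your appeal to the Claim inside the proof of Lemma~\ref{lm:Przytycki}: that Claim requires the hypothesis $d_{\p^1}(c_j,\J_{c,a})<\kappa\, M(c,a)^{-n}$, and you never verify it. From $R=g_{c,a}(c_j)=O(d^{-n})$ the most one can hope for (and even this needs uniform geometric control of the \emph{disconnected}, parameter-dependent filled Julia sets) is a distance to $K_{c,a}$ of order $d^{-n}$; but $M(c,a)\geq d$, and typically $M(c,a)>d$, so $\kappa M^{-n}$ is exponentially smaller than $d^{-n}$ and the required closeness simply does not follow. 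If the closeness fails, no fallback is available: $c_j$ lies in the basin of $\infty$, which is an attracting basin of period $1$ dividing $n$ — exactly the excluded alternative of Lemma~\ref{lm:Przytycki} — and $c_j$ and $P^n_{c,a}(c_j)$ may lie in the \emph{same} Fatou component, so the "different components" step of that lemma cannot substitute. In short, potential-theoretic bounds cannot rule out $P^n_{c,a}(c_j)$ being extremely close to $c_j$ in this regime, and the one dynamical tool you invoke does not apply as stated.

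The paper closes this regime with a different dynamical argument, which is the key idea you are missing. On the compact set $\Omega_{R_0}=\{G\leq R_0\}$ one sets $C:=\sup_{\Omega_{R_0}}\sup_{\p^1}P^\#_{c,a}$ and argues by contradiction: if $\varepsilon:=d_{\p^1}(P^n_{c,a}(c_j),c_j)<\kappa C^{-n}$, the same contraction estimate as in Przytycki's proof gives $P^n_{c,a}(\B(c_j,2\varepsilon))\subset\B(c_j,2\varepsilon)$; by Brouwer, $P^n_{c,a}$ has a fixed point in $\overline{\B}(c_j,2\varepsilon)$, so this ball lies in the Fatou set, and since the orbit of $c_j$ tends to $\infty$ while staying in the $P^n_{c,a}$-invariant ball, necessarily $\infty\in\overline{\B}(c_j,2\varepsilon)$. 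This forces $\bigl(1+|c_j|^2\bigr)^{-1/2}\leq 2\varepsilon<2\kappa C^{-n}$, which contradicts the boundedness of $|c_j|$ on $\Omega_{R_0}$ once $C$ is enlarged suitably. This yields $d_{\p^1}(P^n_{c,a}(c_j),c_j)\geq\kappa C^{-n}$ uniformly on $\Omega_{R_0}$ whenever $g_{c,a}(c_j)=G(c,a)>0$, which is what replaces your unverified closeness-to-$\J$ step; the rest of your reduction (Euclidean distance dominates chordal distance, divide by $d^n$, use Lemma~\ref{ineq:green} uniformly on $\Omega_{R_0}$) then goes through as in the paper.
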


\begin{proof}
As above, we first treat the case $G(c,a)=0$ and then focus on the case $G(c,a)>0$. Consider first the case where $(c,a)\in \mathcal{C}_d\cap \mathrm{supp}(T_j)$. Hence  $c_j$ is active at $(c,a)$ and, in particular, $c_j$ does not lie in an attracting basin. By Lemma \ref{lm:Przytycki}, there exist a universal constant $0<\kappa<1$ and a continuous function $C(c,a)>1$ such that:
$$|P_{c,a}^{n}(c_j)-c_j|\geq\frac{|P_{c,a}^{n}(c_j)-c_j|}{\sqrt{1+|P_{c,a}^{n}(c_j)|^2}\cdot\sqrt{1+|c_j|^2}}=d_{\p^1}(P_{c,a}^{n_j}(c_j),c_j)\geq\frac{\kappa}{C(c,a)^{n}}~.$$
Hence, if $C:=\max_{(c,a)\in\partial\mathcal{C}_d}C(c,a)>1$, then
\begin{eqnarray*}
\frac{1}{d^{n}}\log|P_{c,a}^{n}(c_j)-c_j| \geq -\frac{n}{d^{n}} \log \left(\frac{C}{\kappa}\right) +g_{c,a}(c_j)
\end{eqnarray*}
for any $n\in \N^*$, since $g_{c,a}(c_j)=0$ on $\partial\mathcal{C}_d$. This is the expected result in the case where $(c,a)\in \mathcal{C}_d\cap \mathrm{supp}(T_j)$.

~

Assume now that  $(c,a)$ satisfies $G(c,a)= g_{c,a}(c_j)=R>0$. Consider the compact domain $\Omega_R:= \{G \leq R\}$.
As above, since $G(c,a)=\log^+\max\{|c|,|a|\}+O(1)$, according to Lemma~\ref{ineq:green}, taking $R_0$ large enough, we can assume that for all $n\geq1$ and all $R\geq R_0$: 
\[ |P^n_{c,a}(c_j)|\geq 2\max\{|c|,|a|\}\geq 2|c_j|\geq 1.\]
Assume first that $R\geq R_0$. Hence:
\[ |P^{n}_{c,a}(c_j) -c_j|\geq \frac{1}{2} |P^{n}_{c,a}(c_j)| .\]
Up to increasing $R_0$, combining $G(c,a)=\log^+\max\{|c|,|a|\}+O(1)$ and $g_{c,a}(c_j)=d^{-n}g_{c,a}(P^n_{c,a}(c_j))$ with \cite[Lemma 6.4]{favredujardin} (see Lemma~\ref{lm:DF6.4}) gives for $(c,a)$ and for any $n\geq 1$:
\begin{equation}\label{lemme1_eq3}
d^{-n}\log|P_{c,a}^{n}(c_j)|= d^{-n}\log^+|P_{c,a}^{n}(c_j)|\geq g_{c,a}(c_j)+O(d^{-n})~,
\end{equation}
where the $O$ is independent of $n$ and $(c,a)$. This gives a constant $C''$ (independent of $R\geq R_0$) such that for all $n\geq 1$:
$$g_{c,a}(c_j)-C''(d^{-n}) \leq  \frac{1}{d^{n}}\log |P^{n}_{c,a}(c_j) -c_j|.$$

Assume now  $R< R_0$ and consider $\Omega_{R_0}= \{G \leq R_0\}$. 

\begin{claim}
There exists a constant $C>1$ depending only on $d$ and $R_0$ such that, for all $(c,a)\in\Omega_{R_0}$, all $n\geq1$ and all $0\leq j\leq d-2$ with $G(c,a)=g_{c,a}(c_j)>0$, we have
\[d_{\p^1}(P_{c,a}^{n}(c_j),c_j)\geq\frac{\kappa}{C^{n}}~.\]
\end{claim}

Let us first finish the proof of Lemma~\ref{magic_eq2}. As a consequence,
$$\frac{|P_{c,a}^{n}(c_j)-c_j|}{\sqrt{1+|P_{c,a}^{n}(c_j)|^2}\cdot\sqrt{1+|c_j|^2}}=d_{\p^1}(P_{c,a}^{n_j}(c_j),c_j)\geq\frac{\kappa}{C^{n}}~.$$
Taking the logarithm in tha above inequality, dividing it by $d^n$ and using Lemma~\ref{ineq:green} uniformly on $\Omega_{R_0}$ imply:
\begin{align*}
 -\frac{n}{d^n}\log \frac{C}{\kappa} &\leq \frac{1}{d^n}|P_{c,a}^{n}(c_j)-c_j| - \frac{1}{2d^n}\log \sqrt{1+|P_{c,a}^{n}(c_j)|^2} -\frac{1}{2d^n}\log\left(1+|c_j|^2\right)\\
                                        &\leq  \frac{1}{d^n}|P_{c,a}^{n}(c_j)-c_j| - g_{c,a}(c_j)+ C'\frac{1}{2d^n},
\end{align*}
where $C'$ is yet another constant that depends on $d$ and $R_0$ but neither on $(c,a)$ nor on $n$. This concludes the proof. 
\end{proof}

\begin{proof}[Proof of the Claim]
Let us first pick
\[C>\sup_{(c,a)\in\Omega_{R_0}}\left(\sup_{z\in\p^1}P^\#_{c,a}(z)\right)^2>1~.\]
We proceed by contradiction, assuming that for some $n\geq2$, $d_{\p^1}(P_{c,a}^{n}(c_j),c_j)<\frac{\kappa}{C^{n}}$.

As seen in the proof of Lemma~\ref{lm:Przytycki}, letting $\varepsilon:=d_{\p^1}(P_{c,a}^{n}(c_j),c_j)$ and proceeding as in the proof of the Claim of Section~\ref{sec:localestimates}, we get
\[P_{c,a}^n\left(\B(c_j,2\varepsilon)\right) \subset \B(c_j,2\varepsilon)~.\]
 By Brouwer fixed point Theorem, $P^n_{c,a}$ has a fixed point in $\overline{\B}(c_j,2\varepsilon)$ and this ball is contained in the Fatou set of $P_{c,a}$. Hence $\infty\in\overline{\B}(c_j,2\varepsilon)$, since $c_j$ lies in the attracting basin of $\infty$ of $P_{c,a}$, i.e.
\[\frac{1}{\sqrt{1+|c_j|^2}}\leq d_{\p^1}(\infty,c_j)\leq 2 d_{\p^1}(c_j,P^n_{c,a}(c_j))<\frac{2\kappa}{C^{n}}~.\]
This may be rephrased as $C^2< C^{2n}< 4\kappa^2 \left(1+C(R_0)^2\right)$ where $C(R_0)=\max_{(c,a)\in \Omega_{R_0}} |c_j|$. Up to increasing $C$, we may assume $C^2\geq 2\kappa^2\left(1+C(R_0)^2\right)$, which is a contradiction.
\end{proof}

\subsection{Estimates for the dynatomic polynomials}

In the sequel, we shall use the notation
\[g_{n,j}(c,a):=\frac{1}{d_n}\log|P_{n,j}(c,a)|~, \ (c,a)\in\C^{d-1}, \ n\geq2~.\]
The following two propositions are keystones to the proof of Theorem~\ref{tm:centers}. These estimates are direct consequences of Lemma~\ref{magic_eq1}, Lemma~\ref{magic_eq2} and the maximum principle.

\begin{proposition}\label{prop:estimateabove}
There exists a constant $C\geq1$ depending only on $d$ such that for all $n\geq1$, $(c,a)\in \C^{d-1}$ and any  $0\leq j\leq d-2$ such that $g_{c,a}(c_j)=G(c,a)$,
\[g_{n,j}(c,a)-g_{c,a}(c_j)\leq C\frac{\sigma(n)}{d_{n}}~.\]
In particular, for any $0\leq j\leq d-2$ if $(c,a)\in\mathcal{C}_d$, we have:
\[g_{n,j}(c,a)\leq C\frac{\sigma(n)}{d_{n}}~.\]
\end{proposition}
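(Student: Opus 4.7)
The plan is to split the argument into two steps, according to whether Lemma~\ref{magic_eq2} applies directly, and to handle the remaining piece of $\mathcal{C}_d$ by the maximum principle.

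\emph{Step 1: Direct estimate.} In the first step I would assume that either $g_{c,a}(c_j)=G(c,a)>0$ or $(c,a)\in\mathcal{C}_d\cap\supp(T_j)$; both alternatives imply $g_{c,a}(c_j)=G(c,a)$, so Lemma~\ref{magic_eq1} applies and one of the two hypotheses of Lemma~\ref{magic_eq2} is met. Starting from the M\"obius inversion
\[d_n\,g_{n,j}(c,a)=\log|P_{n,j}(c,a)|=\sum_{k\mid n}\mu(n/k)\,\log\bigl|P_{c,a}^k(c_j)-c_j\bigr|,\]
I would estimate each summand according to the sign of $\mu(n/k)\in\{-1,0,+1\}$: for $k$ with $\mu(n/k)=+1$, Lemma~\ref{magic_eq1} gives the upper bound $\log|P_{c,a}^k(c_j)-c_j|\leq d^k g_{c,a}(c_j)+C_1$; for $k$ with $\mu(n/k)=-1$, Lemma~\ref{magic_eq2} provides the matching lower bound $\log|P_{c,a}^k(c_j)-c_j|\geq d^k g_{c,a}(c_j)-C_2k$, so that $-\log|P_{c,a}^k(c_j)-c_j|\leq -d^k g_{c,a}(c_j)+C_2k$. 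Summing and invoking $\sum_{k\mid n}\mu(n/k)d^k=d_n$ from~\eqref{degree}, the principal contributions collapse to $d_n g_{c,a}(c_j)$, while the remainder is bounded by $C_1\tau(n)+C_2\sigma(n)\leq (C_1+C_2)\sigma(n)$, where $\tau(n)$ is the number of divisors of $n$, which satisfies $\tau(n)\leq\sigma(n)$. Dividing by $d_n$ then yields $g_{n,j}-g_{c,a}(c_j)\leq C\sigma(n)/d_n$ with $C=C_1+C_2$.

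\emph{Step 2: Remaining case via the maximum principle.} For $(c,a)\in\mathcal{C}_d\setminus\supp(T_j)$, the hypotheses of Lemma~\ref{magic_eq2} are not met, but since $g_{c,a}(c_j)=G(c,a)=0$ the claim reduces to $g_{n,j}(c,a)\leq C\sigma(n)/d_n$. The plan is to use that $g_{n,j}=d_n^{-1}\log|P_{n,j}|$ is plurisubharmonic on $\C^{d-1}$ and to apply the maximum principle on the compact $\mathcal{C}_d$. Since $P_{n,j}$ is a polynomial and the equilibrium measure of the regular compact $\mathcal{C}_d$ is $\mu_\bif=(dd^c G)^{d-1}$, standard pluripotential theory yields $\sup_{\mathcal{C}_d}g_{n,j}\leq\sup_{\supp\mu_\bif}g_{n,j}$. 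Expanding $T_\bif^{d-1}=(\sum_l T_l)^{d-1}=(d-1)!\,T_0\wedge\cdots\wedge T_{d-2}$ thanks to $T_l\wedge T_l=0$ identifies $\mu_\bif=T_0\wedge\cdots\wedge T_{d-2}$, and therefore $\supp\mu_\bif\subset\bigcap_l\supp(T_l)\subset\supp(T_j)\cap\mathcal{C}_d$. On this latter set Step~1 applies with $g_{c,a}(c_j)=0$ and provides the bound $C\sigma(n)/d_n$, which then propagates to all of $\mathcal{C}_d$.

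The main delicate point I anticipate is the maximum-principle reduction in Step~2, which leans on the identification of $\supp\mu_\bif$ via the decomposition of $T_\bif^{d-1}$ and the Shilov-type property of $\mathcal{C}_d$; once this is in place, Step~1 is essentially bookkeeping with the M\"obius signs and the count $\tau(n)\leq\sigma(n)$. The ``in particular'' statement is then an immediate specialisation of the main inequality to $(c,a)\in\mathcal{C}_d$, where both $g_{c,a}(c_j)$ and $G(c,a)$ vanish.
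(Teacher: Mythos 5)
Your proof is correct and follows essentially the same route as the paper: the M\"obius decomposition of $\log|P_{n,j}|$ combined with Lemmas~\ref{magic_eq1} and~\ref{magic_eq2} in the case where $g_{c,a}(c_j)=G(c,a)>0$ or $(c,a)\in\mathcal{C}_d\cap\supp(T_j)$, followed by a maximum principle to propagate the bound $C\sigma(n)/d_n$ to all of $\mathcal{C}_d$. The only (harmless) variation is in the propagation step: the paper quotes the fact that the Shilov boundary of $\mathcal{C}_d$ is contained in $\bigcap_j\supp(T_j)$ and applies the maximum principle there, whereas you reach the same conclusion via $\supp\mu_\bif\subset\bigcap_j\supp(T_j)$ (using $T_j\wedge T_j=0$) together with the domination principle for the pluricomplex Green function $G$.
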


\begin{proof}
By definition of $P_{n,j}$, we have:
 $$\log\left|P_{n,j}(c,a) \right|=\sum_{k|n}\mu(n/k) \log \left|P_{c,a}^k(c_j)-c_j\right|~,$$
Hence, dividing the above by $d_{n}$ and using \eqref{degree}, we deduce:
 $$\frac{1}{d_{n}}\log\left|P_{n,j}(c,a) \right|-g_{c,a}(c_j)=\sum_{k|n} \frac{d^k}{d_{n}} \mu(n/k) \left(\frac{1}{d^k}\log \left|P_{c,a}^k(c_j)-c_j\right|-g_{c,a}(c_j)\right)~.$$
Suppose that either $g_{c,a}(c_j)=G(c,a)>0$ or $(c,a)\in \mathcal{C}_d\cap \mathrm{supp}(T_j)$. Then Lemmas~\ref{magic_eq1} and~\ref{magic_eq2} imply that:
   $$ \left| \frac{1}{d^k}\log \left|P_{c,a}^k(c_j)-c_j\right|-g_{c,a}(c_j)\right|\leq C\frac{k}{d^k},$$
	for all $k$.
Since $\mu(n_j/k)=\pm 1$ and by the definition of the $\sigma$ function, we have:
\begin{eqnarray}~\label{prop9}
\left|g_{c,a}(c_j)- \frac{1}{d_{n}}\log\left|P_{n,j}(c,a) \right| \right|\leq  C\frac{\sigma(n)}{d_{n}} .
\end{eqnarray}
This ends the proof in the case where $g_{c,a}(c_j)=G(c,a)>0$ or $(c,a)\in \mathcal{C}_d\cap \mathrm{supp}(T_j)$. 

~

Finally, suppose that $(c,a)\in \partial_S\mathcal{C}_d$. As the Shilov boundary $\partial_S \mathcal{C}_d$ of $\mathcal{C}_d$ is contained in $\cap_j \supp{T_j}$, for all $n \geq2$ and all $0\leq j\leq d-2$, we have
\[\frac{1}{d_{n}}\log\left|P_{n,j}(c,a) \right| \leq  C\frac{\sigma(n)}{d_{n}}.\]
that is, for all $(c,a)\in \partial_S\mathcal{C}_d$, all $n \geq2$ and all $0\leq j\leq d-2$,
\[\left|P_{n,j}(c,a) \right| \leq  \exp\left(C\sigma(n)\right).\]
By the maximum principle, the bound extends to $\mathcal{C}_d$, which gives
\[\left|P_{n,j}(c,a) \right| \leq  \exp\left(C\sigma(n)\right),\]
for any $(c,a)\in \mathcal{C}_d$, $n\geq2$ and $0\leq j\leq d-2$.  Taking the logarithm in the above inequality and dividing it by $d_n$ finishes the proof.
\end{proof}

Observe that in the above proof, we proved the following crucial estimate (see \eqref{prop9}):
\begin{proposition}\label{prop:estimates}
Let $C\geq1$ be the constant given by Proposition~\ref{prop:estimateabove} and let $n\geq1$. Assume that either $g_{c,a}(c_j)=G(c,a)>0$, or $(c,a)\in\mathcal{C}_d\cap\textup{supp}(T_j)$. Then
\[g_{n,j}(c,a)-g_{c,a}(c_j)\geq -C\frac{\sigma(n)}{d_{n}}~.\]
\end{proposition}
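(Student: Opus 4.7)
The plan is to extract the estimate directly from the pointwise inequalities that already form the backbone of the proof of Proposition~\ref{prop:estimateabove}, without invoking any additional machinery. Starting from the definition of the dynatomic polynomial and using \eqref{degree}, one writes
\[g_{n,j}(c,a) - g_{c,a}(c_j) = \sum_{k\mid n} \frac{d^k}{d_n}\,\mu(n/k)\left(\frac{1}{d^k}\log\bigl|P_{c,a}^k(c_j)-c_j\bigr| - g_{c,a}(c_j)\right).\]
Under either of the standing hypotheses, $g_{c,a}(c_j)=G(c,a)>0$ or $(c,a)\in\mathcal{C}_d\cap\textup{supp}(T_j)$, both Lemma~\ref{magic_eq1} and Lemma~\ref{magic_eq2} apply simultaneously at the parameter $(c,a)$, and combining them yields the two-sided estimate
\[\left|\frac{1}{d^k}\log\bigl|P_{c,a}^k(c_j)-c_j\bigr| - g_{c,a}(c_j)\right| \leq C\frac{k}{d^k}\]
for every divisor $k$ of $n$, with $C$ the constant of Proposition~\ref{prop:estimateabove}.

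Substituting this into the identity above, using $|\mu(n/k)|\leq 1$, and summing over divisors gives the two-sided bound $|g_{n,j}(c,a)-g_{c,a}(c_j)|\leq C\sigma(n)/d_n$, whose lower half is the desired inequality. This is precisely the intermediate estimate labeled \eqref{prop9} inside the proof of Proposition~\ref{prop:estimateabove}: the present Proposition is the statement one gets by retaining the lower half of that pointwise inequality, so no new argument is required.

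The one conceptual point worth emphasising, which is essentially the only obstacle, is why the hypothesis cannot be dropped: in Proposition~\ref{prop:estimateabove} the upper bound is propagated to all of $\mathcal{C}_d$ via the maximum principle applied to the plurisubharmonic function $\log|P_{n,j}|$, whereas no analogous ``minimum principle'' controls a psh function from below. Consequently the hypothesis $(c,a)\in\mathcal{C}_d\cap\textup{supp}(T_j)$ is sharp: at a parameter where $c_j$ is super-attracting of period dividing $n$ one has $P_{c,a}^k(c_j)=c_j$ for some $k\mid n$, forcing $g_{n,j}(c,a)=-\infty$ while $g_{c,a}(c_j)=0$, so no uniform lower bound of the stated form can survive outside the support of $T_j$. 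Thus the work is entirely done by the pointwise Przytycki-type estimates of the previous subsection, and the only care required is to verify that the two lemmas can be invoked simultaneously at every parameter satisfying the hypothesis.
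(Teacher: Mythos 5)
Your argument is correct and is essentially identical to the paper's: Proposition~\ref{prop:estimates} is precisely the lower half of the two-sided estimate \eqref{prop9} established inside the proof of Proposition~\ref{prop:estimateabove}, obtained by applying Lemmas~\ref{magic_eq1} and~\ref{magic_eq2} for each divisor $k$ of $n$ and summing with $|\mu(n/k)|\leq 1$. Only your closing side remark is slightly imprecise: $g_{n,j}(c,a)=-\infty$ requires $c_j$ to be periodic of \emph{exact} period $n$, not merely of period dividing $n$ (by Lemma~\ref{lm:dynat} the dynatomic polynomial $P_{n,j}$ does not vanish when the exact period is a proper divisor), but this aside does not affect the validity of the proof.
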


\subsection{Locating ``bad'' parameters}

We shall now give two basic consequences of the above estimates which will be crucial to our aim. Namely, the following is a consequence of Propositions~\ref{prop:estimateabove} and~\ref{prop:estimates}. For any $(d-1)$-tuple $\un=(n_0,\ldots,n_{d-2})$ of positive integers, let
\[\mathcal{B}_{\un}:=\bigcap_{j=0}^{d-2}\left\{(c,a)\in\C^{d-1}\, ; \ |g_{n_j,j}(c,a)-g_{c,a}(c_j)|>C\frac{\sigma(n_j)}{d_{n_j}}\right\}~,\]
where $C\geq1$ is the constant given by Proposition~\ref{prop:estimateabove}.
\begin{corollary}\label{cor:dansCd}
For any $(d-1)$-tuple $\un=(n_0,\ldots,n_{d-2})$ of positive integers, 
\[\mathcal{B}_{\un}=\bigcap_{j=0}^{d-2}\left\{(c,a)\in\C^{d-1}\, ; \ |P_{n_j,j}|<e^{-C\sigma(n_j)}\right\}\subset \mathcal{H}_{\un}\subset\mathcal{C}_d~.\]
\end{corollary}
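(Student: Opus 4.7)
My plan is to prove the equality and both inclusions in three steps.

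For the set equality together with $\mathcal{B}_{\un}\subset\mathcal{C}_d$: I rewrite $|P_{n_j,j}|<e^{-C\sigma(n_j)}$ as $g_{n_j,j}<-C\sigma(n_j)/d_{n_j}$, so one inclusion is immediate from $g_{c,a}(c_j)\ge 0$. For the reverse inclusion and the $\mathcal{C}_d$-inclusion, I pick $(c,a)\in\mathcal{B}_{\un}$ and suppose for contradiction that $G(c,a)>0$; choosing an index $j_0$ realising $g_{c,a}(c_{j_0})=G(c,a)$, Propositions~\ref{prop:estimateabove} and~\ref{prop:estimates} together yield $|g_{n_{j_0},j_0}-g_{c,a}(c_{j_0})|\le C\sigma(n_{j_0})/d_{n_{j_0}}$, contradicting $\mathcal{B}_{\un}$-membership. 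Hence $(c,a)\in\mathcal{C}_d$, and the ``in particular'' part of Proposition~\ref{prop:estimateabove} forces $g_{n_j,j}\le C\sigma(n_j)/d_{n_j}$, leaving only $g_{n_j,j}<-C\sigma(n_j)/d_{n_j}$.

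Next, for $\mathcal{B}_{\un}\subset\mathcal{H}_{\un}$, given $(c,a)\in\mathcal{B}_{\un}\subset\mathcal{C}_d$ and an index $j$, I apply Lemma~\ref{lm:Przytycki} to $c_j$. If $c_j$ does not lie in any immediate basin of an attracting cycle of period dividing $n_j$, then at every divisor $k\mid n_j$ the ``basin'' alternative also fails (a basin of period $p\mid k\mid n_j$ would force $p\mid n_j$, contradicting failure at $n=n_j$), so Przytycki's second alternative gives $|P^k(c_j)-c_j|\ge\kappa M^{-k}$. Combined with Lemma~\ref{magic_eq1}'s upper bound on $\mathcal{C}_d$ and the dynatomic expansion $\log|P_{n_j,j}|=\sum_{k\mid n_j}\mu(n_j/k)\log|P^k(c_j)-c_j|$, this reproduces the computation of Proposition~\ref{prop:estimates} and yields $|P_{n_j,j}(c,a)|\ge e^{-C\sigma(n_j)}$, contradicting $\mathcal{B}_{\un}$-membership. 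Hence $c_j$ does lie in the immediate basin of some attracting cycle of period $p_j\mid n_j$.

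Finally, to force $p_j=n_j$, I use the maximum principle on the bounded open set $W_j:=\{|P_{n_j,j}|<e^{-C\sigma(n_j)}\}$ (bounded because $P_{n_j,j}$ is a nonconstant polynomial). Any connected component $V$ of $W_j$ avoiding the zero locus $\Per^*_j(n_j)$ would carry a pluriharmonic $\log|P_{n_j,j}|$, and the maximum principle applied to both $\pm\log|P_{n_j,j}|$ would force it to equal its boundary value $-C\sigma(n_j)$ throughout $V$, contradicting the strict defining inequality. Hence the component of $W_j$ through $(c,a)$ meets $\Per^*_j(n_j)$ at a parameter $q_j$ where $c_j$ is super-attracting of exact period $n_j$; combining this with the Przytycki argument of the preceding paragraph applied pointwise (at least inside $\mathcal{C}_d$) shows that $c_j$'s basin period is locally constant along a path in that component from $(c,a)$ to $q_j$, giving $p_j=n_j$. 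I expect the principal obstacle to be making this last topological/connectedness argument fully rigorous: one must verify that the component of $W_j$ can be traversed within the region where $c_j$ lies in an attracting basin, using Proposition~\ref{prop:estimates} to forbid meeting $\supp(T_j)\cap\mathcal{C}_d$ and Proposition~\ref{prop:estimateabove} to control what happens if one tries to exit $\mathcal{C}_d$ through the critical point $c_j$ itself.
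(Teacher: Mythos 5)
The first part of your argument (the set equality and $\mathcal{B}_{\un}\subset\mathcal{C}_d$, via Propositions~\ref{prop:estimateabove} and~\ref{prop:estimates} applied to an index realising the max $G$) is correct and is essentially the paper's own proof of that part. The genuine gap is in the inclusion $\mathcal{B}_{\un}\subset\mathcal{H}_{\un}$. Your key claim that $W_j:=\{|P_{n_j,j}|<e^{-C\sigma(n_j)}\}$ is bounded ``because $P_{n_j,j}$ is a nonconstant polynomial'' is false: in $\C^{d-1}$ with $d\geq3$ sublevel sets of polynomials are in general unbounded, and indeed $W_j$ contains a neighbourhood of the affine hypersurface $\Per^*_j(n_j)$, which is unbounded; for a single $j$ the set $W_j$ is not contained in $\mathcal{C}_d$ either (only the intersection over \emph{all} $j$ is, which is exactly what the first part proves). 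Hence the maximum/minimum principle on a component $V$ of $W_j$ is unavailable, you cannot conclude that $V$ meets $\Per^*_j(n_j)$, and, even granting that, the path from $(c,a)$ to $q_j$ inside $V$ may leave $\mathcal{C}_d$, where neither Lemma~\ref{magic_eq1} nor your Przytycki bound constrains $c_j$ (Proposition~\ref{prop:estimates} only applies when $g_{c,a}(c_j)=G(c,a)>0$, and on $V$ the escaping critical point may well be a different one); so the ``locally constant basin period'' step you yourself flag as the obstacle genuinely breaks down. A secondary problem: in your second step you invoke Przytycki at arbitrary interior, non-active parameters of $\mathcal{C}_d$, a case not covered by Proposition~\ref{prop:estimates}; rerunning Lemma~\ref{lm:Przytycki} there yields a constant controlled by $\sup_{\mathcal{C}_d}M$, which need not be dominated by the specific constant $C$ defining $\mathcal{B}_{\un}$, so the contradiction with $|P_{n_j,j}|<e^{-C\sigma(n_j)}$ is not justified as stated.

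For comparison, the paper avoids all of this by treating the indices simultaneously and staying inside a stable component $U$ of the interior of $\mathcal{C}_d$ (the first part already gives $\mathcal{B}_{\un}\subset\mathrm{int}\,\mathcal{C}_d$). It sets $\phi_{\un}:=\max_j\left(\log|P_{n_j,j}|+C\sigma(n_j)\right)$; if $U$ is not contained in $\mathcal{H}_{\un}$ then $U$ contains no point of $\bigcap_j\Per^*_j(n_j)$, so $(dd^c\phi_{\un})^{d-1}=0$ on $U$, while on $\partial U\subset\partial\mathcal{C}_d\subset\bigcup_j\mathcal{C}_d\cap\supp(T_j)$ Proposition~\ref{prop:estimates} gives $\phi_{\un}\geq0$; the Bedford--Taylor comparison principle then yields $\phi_{\un}\geq0$ on $U$, hence $U\cap\mathcal{B}_{\un}=\emptyset$. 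This uses only boundary points of $U$, where some critical point is active (which is exactly why the max over $j$ is taken), it needs no boundedness claim beyond compactness of $\mathcal{C}_d$, and it delivers the exact periods at once, since the stable components meeting $\bigcap_j\Per^*_j(n_j)$ are precisely the components of $\mathcal{H}_{\un}$ — so no upgrade from ``period dividing $n_j$'' to ``exact period $n_j$'' and no path argument are needed. Repairing your approach would essentially amount to switching to this argument.
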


\begin{proof}
Let us first prove that the set 
\[\mathcal{B}_{\un}=\bigcap_{j=0}^{d-2}\left\{|g_{n_j,j}(c,a)-g_{c,a}(c_j)|>C\frac{\sigma(n_j)}{d_{n_j}}\right\}\]
is contained in the connectedness locus $\mathcal{C}_d$. Indeed, for every $(c,a)\in\C^{d-1}\setminus \mathcal{C}_d$, there exists $0\leq j\leq d-2$ such that $G(c,a)=g_{c,a}(c_j)>0$ . By Propositions~\ref{prop:estimateabove} and \ref{prop:estimates}, we have
\[-C\frac{\sigma(n_j)}{d_{n_j}}\leq g_{n_j,j}(c,a)-g_{c,a}(c_j)\leq C\frac{\sigma(n_j)}{d_{n_j}}~,\]
so that $(c,a)\in\C^{d-1}\setminus\mathcal{B}_{\un}$. The same argument implies in fact that $\mathcal{B}_{\un}$ is contained in the interior of $\mathcal{C}_d$, using the case  $(c,a)\in\mathcal{C}_d\cap\textup{supp}(T_j)$ in Proposition \ref{prop:estimates} and $\partial\mathcal{C}_d=\bigcup_j\mathcal{C}_d\cap\textup{supp}(T_j)$. Now from Proposition~\ref{prop:estimateabove}, we have that for all $(c,a)\in \mathcal{C}_d$ and all $0\leq j\leq d-2$:
 \[g_{n_j,j}(c,a)\leq C\frac{\sigma(n_j)}{d_{n_j}}~.\]
As $g_{c,a}(c_j)=0$ in $\mathcal{C}_d$, this implies:
\[\mathcal{B}_{\un}=\bigcap_{j=0}^{d-2}\left\{g_{n_j,j}<-C\frac{\sigma(n_j)}{d_{n_j}}\right\}\cap \mathcal{C}_d~.\]
This can be rewritten as:
\[\bigcap_{j=0}^{d-2}\left\{|P_{n_j,j}|<e^{-C\sigma(n_j)}\right\}\cap \mathcal{C}_d =\bigcap_{j=0}^{d-2}\left\{ |g_{n_j,j}(c,a)-g_{c,a}(c_j)|>C\frac{\sigma(n_j)}{d_{n_j}}\right\}~.\]
Arguing as above, we see that the set $\bigcap_{j=0}^{d-2}\left\{|P_{n_j,j}|<e^{-C\sigma(n_j)}\right\}$ is contained in $\mathcal{C}_d $: if not, for some $(c,a)$ in that set such that $g_{c,a}(c_j)=G(c,a)>0$, we have
\[g_{n_j,j}(c,a)-g_{c,a}(c_j)<g_{n_j,j}(c,a)<-C\frac{\sigma(n_j)}{d_{n_j}},\]
which contradicts Proposition \ref{prop:estimates}.

It remains to prove that $\mathcal{B}_{\un}\subset\mathcal{H}_{\un}$. Take a stable component $U\subset\mathcal{C}_d$ which is not contained in $\mathcal{H}_{\un}$. Consider the plurisubharmonic function:
$$ \phi_\un: = \max_j ( \log |P_{n_j,j}| + C \sigma(n_j) ).$$
Then its Monge-Amp\`ere $(dd^c \phi_\un)^{d-1}$ is $0$ on $U$ by hypothesis (see e.g. \cite{Demailly}). On the other hand, by Proposition~\ref{prop:estimates}, it is non negative in $\partial U$ ($\partial U\subset\partial \mathcal{C}_d\subset \cup_j \mathcal{C}_d\cap\textup{supp}(T_j)$). The comparison principle of Bedford and Taylor \cite{BedfordTaylor} implies that it is non negative on $U$. Hence $U\cap \mathcal{B}_{\un}=\varnothing$.
\end{proof}

Here is a consequence of Corollary~\ref{cor:dansCd}.

\begin{corollary}\label{cor:biholodansHn}
For any $(d-1)$-tuple $\un=(n_0,\ldots,n_{d-2})$ of pairwise distinct positive integers with $n_0\geq2$ and any connected component $U$ of $\mathcal{B}_{\un}$, the map
\[P_{\un}:=(P_{n_0,0},\ldots,P_{n_{d-2},d-2}):\C^{d-1}\longrightarrow\C^{d-1}\]
is a biholomorphism from $U$ to the polydisk $\D_{\un}:=\prod_j\D(0,e^{-C\sigma(n_j)})$.
\end{corollary}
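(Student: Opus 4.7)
The plan is to show that $P_{\un}$ restricts to a proper holomorphic map $U\to\D_{\un}$ of degree one, and hence a biholomorphism.

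Since $\mathcal{B}_{\un}$ is a finite intersection of open sets, it is open, so its connected component $U$ is an open connected subset of $\C^{d-1}$. Corollary~\ref{cor:dansCd} gives $\mathcal{B}_{\un}\subset \mathcal{H}_{\un}$, so by connectedness $U$ is contained in a single connected component $\Omega$ of $\mathcal{H}_{\un}$; in particular $U$ is bounded. The properness of $P_{\un}|_U:U\to\D_{\un}$ follows from the fact that $\mathcal{B}_{\un}$ is open: any $\zeta\in\partial U$ cannot lie in $\mathcal{B}_{\un}$ (otherwise a whole neighborhood of $\zeta$ would sit inside $\mathcal{B}_{\un}$ and meet $U$, forcing $\zeta\in U$), so $|P_{n_j,j}(\zeta)|\geq e^{-C\sigma(n_j)}$ for some $j$ and $P_{\un}(\zeta)\notin\D_{\un}$.

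Next I would argue that $P_{\un}(U)=\D_{\un}$. By Theorem~\ref{tm:transverseFG}, the Jacobian $\det DP_{\un}$ does not vanish at the points of $\bigcap_j\Per^*_j(n_j)$, so this polynomial is not identically zero on $\C^{d-1}$, and therefore not on the open set $U$. Hence $P_{\un}$ is of full rank at some point of $U$; combined with properness and Remmert's proper mapping theorem, the analytic set $P_{\un}(U)\subset\D_{\un}$ is closed and contains a non-empty open set, forcing $P_{\un}(U)=\D_{\un}$.

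The proper surjective holomorphic map $P_{\un}|_U$ is thus a finite branched cover whose degree equals the total local intersection multiplicity along the fibre $U\cap P_{\un}^{-1}(0)$. By Lemma~\ref{lm:dynat}, a point in this fibre is a parameter where each critical point $c_j$ has exact period $n_j$, i.e.\ a center of a component of $\mathcal{H}_{\un}$; since $U\subset\Omega$ and $\Omega$ contains the unique center $(c_c,a_c)$, we deduce $U\cap P_{\un}^{-1}(0)=\{(c_c,a_c)\}$, and Theorem~\ref{tm:transverseFG} again gives local intersection multiplicity one there. Hence $P_{\un}|_U$ has degree one, and any proper degree-one holomorphic map between connected complex manifolds of equal dimension is a biholomorphism. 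The delicate point is really the surjectivity in the previous paragraph: \emph{a priori} nothing forces $U$ to contain any center, and one must use the non-vanishing of the global polynomial $\det DP_{\un}$ to rule out the possibility that $P_{\un}(U)$ is a proper analytic subvariety of $\D_{\un}$ avoiding~$0$.
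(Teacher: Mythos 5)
Your argument is correct and is essentially the paper's own proof: properness of $P_{\un}$ on $U$, surjectivity onto $\D_{\un}$, and degree one obtained from the uniqueness of the postcritically finite parameter in the hyperbolic component together with the transversality Theorem~\ref{tm:transverseFG}. The only deviation is your justification of surjectivity (Remmert plus $\det DP_{\un}\not\equiv 0$), where the non-vanishing inference tacitly requires $\bigcap_j\Per^*_j(n_j)\neq\emptyset$; this is recorded in Section~\ref{Sec:basics} ($\textup{Card}(\bigcap_j\Per^*_j(n_j))=\prod_j d_{n_j}$) and should be cited, or can be avoided altogether by noting that properness forces the fibres to be compact analytic subsets of $\C^{d-1}$, hence finite, so $P_{\un}$ has generically full rank on $U$.
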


\begin{proof}
By Corollary~\ref{cor:dansCd} and by the maximum principle, $P_{\un}$ maps $U$ to $\D_{\un}$ surjectively. Moreover, by definition of $U$, the map $P_{\un}$ is also proper on $U$, hence is a finite branched cover from $U$ to $\D_{\un}$. Let $(c_c,a_c)$ be the unique postcritically finite parameter of $U$. By Theorem~\ref{tm:transverseFG}, the map $P_{\un}$ is a local biholomorphism at $(c_c,a_c)$ and, since $P_{\un}^{-1}\{0\}=\{(c_c,a_c)\}$, the degree of $P_{\un}$ is $1$.
\end{proof}

\section{Distribution of postcritically finite hyperbolic polynomials}\label{sec:principale}

In the present section, we prove Theorem \ref{tm:centers}. We adapt the strategy we used in the unicritical family $(p_c)_{c\in\C}$ in Section~\ref{sec:unicritical} to the present situation. We use all along the present section the notations introduced in Sections~\ref{Sec:basics} $\&$~\ref{sec:Przytycki}. 
We assume from now on that $\un=(n_0,\ldots,n_{d-2})$ is a $(d-1)$-tuple of pairwise distinct integers with $n_0\geq2$. We also let $D_{\un}>0$ be the integer 
\[D_{\un}:=\prod_{j=0}^{d-2}d_{n_j}~.\]

\subsection{Estimates along specific curves}

Recall that we denoted by $\mathcal{H}_{\un}$ the union of hyperbolic components of $\mathcal{C}_d$ intersecting $\bigcap_j\Per^*_j(n_j)$. Let us first introduce some notations. For $\delta\in\C$, $0\leq j\leq d-2$, we let
\[g_{n_j,j}^\delta:=\frac{1}{d_{n_j}}\log|P_{n_j,j}-\delta|~, \text{ and }\ T_{n_j,j}^\delta:=dd^cg_{n,j}^\delta~.\]
For the rest of the section, we also shall write for $0\leq j\leq d-2$,
\[g_j(c,a):=g_{c,a}(c_j)~, \ \ T_j:=dd^cg_j~, \ \ g_{n_j,j}:=\frac{1}{d_{n_j}}\log|P_{n_j,j}| \ \textup{ and } \ T_{n_j,j}:=dd^cg_{n_j,j}~.\]
The key step of our proof can be summarized in the following proposition.

\begin{theorem}\label{tm:sauvelavie}
Let $C\geq1$ be the constant given by Proposition~\ref{prop:estimates}. Let $0\leq j\leq d-2$ and, for all $0\leq \ell\leq j-1$, pick $\delta_\ell\in\D\left(0,\exp(-C\sigma(n_\ell))\right)$. There exists a constant $C'\geq1$ which depends only on $d$ such that for any $\varphi\in\mathcal{C}^2_c(\C^{d-1})$, one has
\[\left|\left\langle \bigwedge_{\ell<j}T_{n_\ell,\ell}^{\delta_{\ell}}\wedge\left(T_{n_j,j}-T_j\right)\wedge\bigwedge_{k>j}T_{n_k,k}, \varphi\right\rangle\right|\leq C'\|\varphi\|_{\mathcal{C}^2}\frac{\sigma(n_j)}{d_{n_j}}~.\]
\end{theorem}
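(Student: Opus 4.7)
The strategy mirrors the one developed in Section~\ref{sec:unicritical} for the unicritical family: I will rewrite the pairing via two integrations by parts to move both $dd^c$ onto $\varphi$, and then split the resulting integral into a pointwise estimate on a good region and an $L^1$-estimate on the bad hyperbolic locus $\mathcal{B}_\un$. Specifically, applying Lemma~\ref{lm:Stokes} twice yields
\[
\Bigl\langle\bigwedge_{\ell<j}T_{n_\ell,\ell}^{\delta_\ell}\wedge(T_{n_j,j}-T_j)\wedge\bigwedge_{k>j}T_{n_k,k},\,\varphi\Bigr\rangle
=\int(g_{n_j,j}-g_j)\,dd^c\varphi\wedge S,
\]
where $S:=\bigwedge_{\ell<j}T_{n_\ell,\ell}^{\delta_\ell}\wedge\bigwedge_{k>j}T_{n_k,k}$. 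By Lelong--Poincar\'e, $S=(\prod_{k\neq j}d_{n_k})^{-1}[\Gamma]$ is the (normalised) current of integration on the algebraic curve $\Gamma$ cut out by $\{P_{n_\ell,\ell}=\delta_\ell\}_{\ell<j}$ and $\{P_{n_k,k}=0\}_{k>j}$. Since $\pm dd^c\varphi\leq A\|\varphi\|_{\mathcal{C}^2}\,\beta$ as $(1,1)$-forms, the problem reduces to bounding
\[
\frac{1}{\prod_{k\neq j}d_{n_k}}\int_{\Gamma\cap\mathrm{supp}(\varphi)}|g_{n_j,j}-g_j|\,\beta.
\]

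Next, I would split $\Gamma$ according to $\mathcal{B}_\un$. On $\Gamma\setminus\mathcal{B}_\un$ I claim the pointwise bound $|g_{n_j,j}-g_j|\leq C\sigma(n_j)/d_{n_j}$: by construction of $\Gamma$ one has $|P_{n_i,i}|\leq e^{-C\sigma(n_i)}$ for every $i\neq j$, hence by Corollary~\ref{cor:dansCd}, being off $\mathcal{B}_\un$ amounts to $|P_{n_j,j}|\geq e^{-C\sigma(n_j)}$, so $g_{n_j,j}\geq-C\sigma(n_j)/d_{n_j}$. If $(c,a)\in\mathcal{C}_d$ then $g_j=0$ and Proposition~\ref{prop:estimateabove} supplies the matching upper bound; if $(c,a)\notin\mathcal{C}_d$, a short contradiction using Proposition~\ref{prop:estimates} applied to the index $\ell$ realising $G(c,a)=g_{c,a}(c_\ell)$ forces $\ell=j$ (otherwise $g_{n_\ell,\ell}$ would be forced to be both $\leq-C\sigma(n_\ell)/d_{n_\ell}$ and $>-C\sigma(n_\ell)/d_{n_\ell}$), and Propositions~\ref{prop:estimateabove}--\ref{prop:estimates} close the estimate. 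Since B\'ezout's theorem gives $\mathrm{Area}_\beta(\Gamma\cap\mathrm{supp}(\varphi))=O(\prod_{k\neq j}d_{n_k})$, this first contribution is $O(\sigma(n_j)/d_{n_j})\cdot\|\varphi\|_{\mathcal{C}^2}$.

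On $\Gamma\cap\mathcal{B}_\un$, the crucial input is Corollary~\ref{cor:biholodansHn}: each connected component $U$ of $\mathcal{B}_\un$ is biholomorphic to the polydisk $\D_\un$ via $P_\un$, so $S_U:=\Gamma\cap U$ is a smooth, simply connected analytic disk which $P_{n_j,j}$ maps biholomorphically onto $\D(0,e^{-C\sigma(n_j)})$. In particular, on $S_U$ one has $dd^c_{S_U}\log|P_{n_j,j}|=\delta_{z_U}$ at the centre of $U$, and $\log|P_{n_j,j}|\equiv-C\sigma(n_j)$ on $\partial S_U$. Applying Theorem~\ref{tm:L1estimatedim1} to $f=P_{n_j,j}$ on $S_U$ then gives $\int_{S_U}\bigl|\log|P_{n_j,j}|\bigr|\,\beta\leq C'\sigma(n_j)\,\mathrm{Area}_\beta(S_U)$. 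Using $g_j=0$ on $\mathcal{C}_d\supset\mathcal{B}_\un$, summing over the components, and combining with $\sum_U\mathrm{Area}_\beta(S_U)\leq\mathrm{Area}_\beta(\Gamma\cap\mathrm{supp}(\varphi))\lesssim\prod_{k\neq j}d_{n_k}$, this second contribution is also of order $\sigma(n_j)/d_{n_j}\cdot\|\varphi\|_{\mathcal{C}^2}$ after the $(\prod_{k\neq j}d_{n_k})^{-1}$ normalisation. Adding the two contributions will give the announced bound.

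The main difficulty will be twofold: first, justifying the integration by parts and the Lelong--Poincar\'e identification of $S$ with a current of integration, which requires enough transversality between the hypersurfaces $\{P_{n_\ell,\ell}=\delta_\ell\}$ --- this is supplied by Theorem~\ref{tm:transverseFG} at the $\un$-postcritically finite points and by Corollary~\ref{cor:biholodansHn} elsewhere in $\mathcal{B}_\un$; second, the pointwise analysis off $\mathcal{B}_\un$, where one must show that the ``globally worst'' critical index has to coincide with $j$ along $\Gamma$, so that the anisotropic estimates of Propositions~\ref{prop:estimateabove}--\ref{prop:estimates} can be used. Everything else is a standard application of Theorem~\ref{tm:L1estimatedim1} and Stokes.
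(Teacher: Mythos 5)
Your proposal follows essentially the same route as the paper: integration by parts to put $dd^c$ on $\varphi$, identification of $\bigwedge_{\ell<j}T_{n_\ell,\ell}^{\delta_\ell}\wedge\bigwedge_{k>j}T_{n_k,k}$ with the normalized integration current on the curve $S_j$, the pointwise bound $|g_{n_j,j}-g_j|\leq C\sigma(n_j)/d_{n_j}$ off the bad set via Propositions~\ref{prop:estimateabove}--\ref{prop:estimates} and Corollary~\ref{cor:dansCd}, and the $L^1$ estimate of Theorem~\ref{tm:L1estimatedim1} on the disks cut out by Corollary~\ref{cor:biholodansHn} (with transversality from Theorem~\ref{tm:transverseFG} guaranteeing simple Dirac masses), so in substance the argument is the paper's.

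The only point to repair is your area accounting: B\'ezout controls the Fubini--Study mass $\int_{S_j}\omega=D_{\un}/d_{n_j}$, not the Euclidean area of $S_j\cap\textup{supp}(\varphi)$, so testing $dd^c\varphi$ against $\beta$ on the good region yields a constant depending on $\textup{supp}(\varphi)$ rather than only on $d$; moreover the bad disks $S_U\subset\mathcal{B}_{\un}$ need not be contained in $\textup{supp}(\varphi)$, so the bound $\sum_U\textup{Area}_\beta(S_U)\leq\textup{Area}_\beta(\Gamma\cap\textup{supp}(\varphi))$ is not justified as written. Both issues are fixed exactly as in the paper: compare $dd^c\varphi$ with $\omega$ and use B\'ezout for the good part, and for the bad part use $\mathcal{B}_{\un}\subset\mathcal{C}_d\Subset\B(0,16\sqrt{d-1})$, where $\beta\leq C_2\,\omega$ with $C_2$ depending only on $d$, before invoking B\'ezout again. (Also, the unit Dirac mass of $\log|P_{n_j,j}|$ on $S_U$ sits at the unique zero of $P_{n_j,j}$ on that slice, which is the center of $U$ only when all $\delta_\ell=0$; this does not affect the argument.)
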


\begin{proof}
Fix $0\leq j\leq d-2$. First, the support of $\bigwedge_{\ell<j}T_{n_\ell,\ell}^{\delta_{\ell}}\wedge\bigwedge_{k>j}T_{n_k,k}$ is the algebraic curve
\[S_j:=\bigcap_{\ell<j}\{P_{n_\ell,\ell}=\delta_\ell\}\cap\bigcap_{k>j}\{P_{n_k,k}=0\}~.\]
Pick $\varphi\in\mathcal{C}^2_c(\C^{d-1})$. Then
\begin{eqnarray*}
I_j(\varphi)& := &\left\langle \bigwedge_{\ell<j}T_{n_\ell,\ell}^{\delta_{\ell}}\wedge\left(T_{n_j,j}-T_j\right)\wedge\bigwedge_{k>j}T_{n_k,k}, \varphi\right\rangle\\
& = & \int (g_{n_j,j}-g_j)dd^c\varphi\wedge \bigwedge_{\ell<j}T_{n_\ell,\ell}^{\delta_{\ell}}\wedge\bigwedge_{k>j}T_{n_k,k}\\
 & = & \frac{d_{n_j}}{D_{\un}}\int_{S_j}(g_{n_j,j}-g_j)\, dd^c\varphi~.
\end{eqnarray*}
As $\varphi$ is $\mathcal{C}^2$, we have (up to a constant that depends on the choice of the $\mathcal{C}^2$-norm) that $\|\varphi\|_{\mathcal{C}^2}\omega \pm dd^c \varphi \geq 0$, where $\omega$ is the Fubini-Study form on $\p^{d-1}$, hence we can write 
\[|I_j(\varphi)|\leq  \|\varphi\|_{\mathcal{C}^2}\frac{d_{n_j}}{D_{\un}}\int_{S_j}|g_{n_j,j}-g_j|\, \omega~.\]
According to Corollary~\ref{cor:dansCd}, the set $E_j:=S_j\cap\{ |g_{n_j,j}-g_j|>C\sigma(n_j)/d_{n_j}\}$ is contained in $S_j\cap\mathcal{H}_{\un}$ and coincides with $\{(c,a)\in S_j\, ; |P_{n_j,j}|<e^{-C\sigma(n_j)}\}$. Hence
\[\int_{S_j }|g_{n_j,j}-g_j|\, \omega\leq \int_{E_j}|g_{n_j,j}|\,\omega+ C\frac{\sigma(n_j)}{d_{n_j}}\int_{S_j}\omega~.\]

By Corollary~\ref{cor:biholodansHn}, the curve $S_j$ is smooth in the open set $\mathcal{B}_{\un}$. By the maximum principle, $E_j$ is a finite union of $D_{\un}$ topological disks (see \S~\ref{Sec:basics}), which are bounded since contained in $\mathcal{C}_d$. We now let $u_j:=d_{n_j}g_{n_j,j}=\log|P_{n_j,j}|$. Then $dd^c(u_j|_{S_j})=\sum \alpha_{z_0}\delta_{z_0}$, for some collection $\{\alpha_{z_0}\}$ of positive integers, where the sum ranges over $\Per^*_{j}(n_j)\cap S_j$. Moreover, by B\'ezout, we see that the finite measure $dd^cu_j\wedge S_j$ has mass $D_{\un}$. In particular, $dd^c(u_j|_{S_j})=\sum\delta_{z_0}$.

Let now $\Omega_j$ be a connected component of $E_j$. According to Theorem~\ref{tm:L1estimatedim1}, there exists $C_1>0$ universal such that
\[ \int_{\Omega_j}|u_j|\omega\leq \int_{\Omega_j}|u_j|\beta\leq C_1\max\{\|u_j\|_{L^\infty(\partial\Omega_j)},1\}\textup{Area}_\beta(\Omega_j)~.\]
Now, since $g_{n_j,j}=-C\frac{\sigma(n_j)}{d_{n_j}}$ on the boundary $\partial\Omega_j$ of $\Omega_j$ in $S_j$, we have $\|u_j\|_{L^\infty(\partial\Omega_j)}=C\sigma(n_j)\geq1$ and we find
\[ \int_{\Omega_j}|g_{n_j,j}|\omega \leq C_1C\frac{\sigma(n_j)}{d_{n_j}}\textup{Area}_\beta(\Omega_j)~.\]
Since $\mathcal{C}_d\Subset\B(0,16\sqrt{d-1})$, see e.g. \cite[Proof of Proposition 6.3]{favredujardin}, we can find $C_2>0$ depending only on $d$ such that  $\beta\leq C_2 \omega$ on $\mathcal{C}_d$. In particular, $\textup{Area}_\beta(\Omega_j)\leq C_2\textup{Area}_\omega(\Omega_j)$. Summing up on the components $\Omega_j$ of $E_j$ gives
\[\int_{E_j}|g_{n_j,j}|\omega \leq C_2C_1C\frac{\sigma(n_j)}{d_{n_j}}\int_{S_j}\omega~.\]
Finally, by B\'ezout, the integral $\int_{S_j}\omega$ is equal the degree of the curve, i.e. $D_{\un}/d_{n_j}$ (see e.g. \cite{Chirka}). Summarizing what we did to now, we find
\[|I_j(\varphi)|\leq \|\varphi\|_{\mathcal{C}^2}\left(1+C_2C_1\right)C\frac{\sigma(n_j)}{d_{n_j}}~,\]
which gives the wanted result since $C':=(1+C_2C_1)C$ depends only on $d$.
\end{proof}

\subsection{Intermediate estimates}

We introduce some notations. For all $0\leq j\leq d-2$, we let
\[\tilde g_{n_j,j}:=\max\left\{g_{n_j,j},-2C\frac{\sigma(n_j)}{d_{n_j}}\right\}~, \ \text{ and } \ \widetilde{T}_{n_j,j}:=dd^c\tilde{g}_{n_j,j}~.\]
Let us set $\delta_j(\theta):=e^{-2C\sigma(n_j)+i\theta}$, $\theta\in\R$. It is classical that 
\[\tilde g_{n_j,j}=\frac{1}{2\pi d_{n_j}}\int_0^{2\pi}\log\left|P_{n_j,j}-\delta_j(\theta)\right|\,d\theta~, \ \text{ and } \ \widetilde{T}_{n_j,j}=\frac{1}{2\pi}\int_0^{2\pi}T_{n_j,j}^{\delta_j(\theta)}\, d\theta~.\]

We now want to prove the following technical step of the proof of Theorem~\ref{tm:centers}.
\begin{lemma}\label{lm:facile}
Let $C$ be the constant given by Proposition~\ref{prop:estimates}. Then, for any $j\geq1$, any $0\leq r\leq j-1$ and any $\varphi\in\mathcal{C}^2_c(\C^{d-1})$, we have
\[\left|\left\langle\bigwedge_{\ell<r}T_\ell\wedge(T_r-\widetilde{T}_{n_r,r})\wedge\bigwedge_{r<q<j}\widetilde{T}_{n_q,q}\wedge(T_{n_j,j}-T_j)\wedge\bigwedge_{k>j}T_{n_k,k},\varphi\right\rangle\right|\leq 4C\frac{\sigma(n_{r})}{d_{n_{r}}}\|\varphi\|_{\mathcal{C}^2}~.\]\end{lemma}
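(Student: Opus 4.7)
The plan is to use Stokes' formula (Lemma~\ref{lm:Stokes}) to transfer the operator $dd^c$ from the difference $T_r-\widetilde{T}_{n_r,r}=dd^c(g_r-\tilde g_{n_r,r})$ onto the test function $\varphi$, and then to bound $|g_r-\tilde g_{n_r,r}|$ pointwise on the support of the remaining $(d-2,d-2)$-current. Write
\[S:=\bigwedge_{\ell<r}T_\ell\wedge\bigwedge_{r<q<j}\widetilde{T}_{n_q,q}\wedge(T_{n_j,j}-T_j)\wedge\bigwedge_{k>j}T_{n_k,k}=S^+-S^-,\]
where $S^+$ (resp.\ $S^-$) is obtained by placing $T_{n_j,j}$ (resp.\ $T_j$) at position $j$. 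Each $S^\pm$ is a closed positive current whose mass, measured against a fixed Fubini--Study-type form on $\p^{d-1}$, is bounded by a constant depending only on $d$, since each factor extends to a current in a fixed cohomology class. Applying integration by parts to $S^+$ and $S^-$ separately yields
\[\left\langle S\wedge(T_r-\widetilde{T}_{n_r,r}),\varphi\right\rangle=\int_{\C^{d-1}}(g_r-\tilde g_{n_r,r})\,dd^c\varphi\wedge S.\]

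The heart of the argument is the pointwise estimate $|g_r-\tilde g_{n_r,r}|\leq 2C\sigma(n_r)/d_{n_r}$ on $\supp(S^\pm)$. I would locate the support as follows. Since $g_\ell$ is pluriharmonic on $\{g_\ell>0\}$, $\supp(T_\ell)\subset\{g_\ell=0\}$, and similarly $\supp(T_j)\subset\{g_j=0\}$. By the Poincar\'e--Lelong formula, $\supp(T_{n_\ell,\ell})=\{P_{n_\ell,\ell}=0\}$, and a parameter in this zero set has $c_\ell$ periodic, hence $\supp(T_{n_\ell,\ell})\subset\{g_\ell=0\}$. Finally, $\tilde g_{n_q,q}$ is pluriharmonic away from the level set $\{g_{n_q,q}=-2C\sigma(n_q)/d_{n_q}\}$, so $\supp(\widetilde{T}_{n_q,q})$ is contained in this level set. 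The crucial observation is that, by Proposition~\ref{prop:estimates}, a point of $\supp(\widetilde{T}_{n_q,q})\setminus\mathcal{C}_d$ cannot satisfy $g_q=G$: otherwise $-2C\sigma(n_q)/d_{n_q}=g_{n_q,q}\geq g_q-C\sigma(n_q)/d_{n_q}=G-C\sigma(n_q)/d_{n_q}$, giving $G\leq -C\sigma(n_q)/d_{n_q}<0$, a contradiction. Consequently, on $\supp(S^\pm)$ one has $g_\ell=0$ for every index $\ell\in\{0,\ldots,d-2\}\setminus(\{r\}\cup(r,j))$, and for each $r<q<j$ either $(c,a)\in\mathcal{C}_d$ or $g_q<G$. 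Since $G=\max_i g_i$, the dichotomy on $\supp(S^\pm)$ collapses to: either $(c,a)\in\mathcal{C}_d$, or $(c,a)\notin\mathcal{C}_d$ and $g_r=G>0$.

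In the first case, $g_r=0$ and Proposition~\ref{prop:estimateabove} yields $g_{n_r,r}\leq C\sigma(n_r)/d_{n_r}$, so $\tilde g_{n_r,r}\in[-2C\sigma(n_r)/d_{n_r},\,C\sigma(n_r)/d_{n_r}]$ and $|g_r-\tilde g_{n_r,r}|\leq 2C\sigma(n_r)/d_{n_r}$. In the second case, Propositions~\ref{prop:estimateabove} and~\ref{prop:estimates} combined give $|g_{n_r,r}-g_r|\leq C\sigma(n_r)/d_{n_r}$; hence $g_{n_r,r}>-2C\sigma(n_r)/d_{n_r}$, $\tilde g_{n_r,r}=g_{n_r,r}$, and $|g_r-\tilde g_{n_r,r}|\leq C\sigma(n_r)/d_{n_r}$. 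Plugging this pointwise bound into the Stokes identity, majorizing $dd^c\varphi$ by $\|\varphi\|_{\mathcal{C}^2}$ times a reference form, and summing the mass contributions of $S^+$ and $S^-$ (the factor $2$ from $|S^+|+|S^-|$ promotes the constant $2C$ to $4C$) closes the argument. The main obstacle is the support analysis: the threshold $-2C\sigma(n_q)/d_{n_q}$ in $\tilde g_{n_q,q}$ is calibrated precisely so that Proposition~\ref{prop:estimates} forbids fast escape at every intermediate index $q\in(r,j)$, forcing $c_r$ itself to be the fastest escaping critical point outside $\mathcal{C}_d$ and unlocking the case analysis above.
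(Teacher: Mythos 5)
Your proposal is correct and follows essentially the same route as the paper's proof: integration by parts to move $dd^c$ onto $\varphi$, a cohomological mass count for the remaining wedge product (giving the factor $2$ that turns $2C$ into $4C$), and the pointwise bound $|g_r-\tilde{g}_{n_r,r}|\leq 2C\frac{\sigma(n_r)}{d_{n_r}}$ on the support of that current, obtained exactly as in the paper by using the calibrated threshold $-2C\sigma(n_q)/d_{n_q}$ together with Proposition~\ref{prop:estimates} to force $g_r=G$ there, and then applying Propositions~\ref{prop:estimateabove} and~\ref{prop:estimates} in the two cases $(c,a)\in\mathcal{C}_d$ and $g_r=G>0$. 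No changes are needed.
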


\begin{proof}
Let $j \geq 1$. Pick $\varphi\in\mathcal{C}^2_c(\C^{d-1})$ and let
\begin{eqnarray*}
L_j(\varphi) & := & \left\langle\left(\bigwedge_{\ell<r}T_\ell\right)\wedge(T_r-\widetilde{T}_{n_r,r})\wedge\left(\bigwedge_{r<q<j}\widetilde{T}_{n_q,q}\right)\wedge(T_{n_j,j}-T_j)\wedge\left(\bigwedge_{k>j}T_{n_k,k}\right),\varphi\right\rangle\\
 & = & \int(g_{r}-\tilde{g}_{n_{r},r})dd^c\varphi\wedge\left(\bigwedge_{\ell<r}T_\ell\right)\wedge\left(\bigwedge_{r<q<j}\widetilde{T}_{n_q,q}\right)\wedge(T_{n_j,j}-T_j)\wedge\left(\bigwedge_{k>j}T_{n_k,k}\right).
\end{eqnarray*}
As $\varphi$ is $\mathcal{C}^2$, up to a constant that depends on the choice of the $\mathcal{C}^2$-norm, we have $\|\varphi\|_{\mathcal{C}^2}\omega \pm dd^c \varphi \geq 0$, where $\omega$ is the Fubini-Study form on $\p^{d-1}$, hence we can write
\begin{eqnarray*}
|L_j(\varphi)| & \leq & \|\varphi\|_{\mathcal{C}^2}\int\left|g_r-\tilde{g}_{n_r,r}\right| \omega\wedge\left(\bigwedge_{\ell<r}T_\ell\right)\wedge\left(\bigwedge_{r<q<j}\widetilde{T}_{n_q,q}\right)\wedge T_{n_j,j}\wedge\left(\bigwedge_{k>j}T_{n_k,k}\right)\\
& & +\|\varphi\|_{\mathcal{C}^2}\int\left|g_r-\tilde{g}_{n_r,r}\right| \omega\wedge\left(\bigwedge_{\ell<r}T_\ell\right)\wedge\left(\bigwedge_{r<q<j}\widetilde{T}_{n_q,q}\right)\wedge T_j\wedge\left(\bigwedge_{k>j}T_{n_k,k}\right).
\end{eqnarray*}
Each of the $(1,1)$-currents $T_\ell$, $T_{n_\ell,\ell}$ and $\widetilde{T}_{n_\ell,\ell}$ having projective mass $1$, by B\'ezout, we find
\begin{eqnarray}
|L_j(\varphi)| \leq 2 \|\varphi\|_{\mathcal{C}^2}\sup\left|g_r-\tilde{g}_{n_r,r}\right|~,\label{eq:inegaliteW}
\end{eqnarray}
where the supremum is taken over the support of the current
\[W:=\left(\bigwedge_{\ell<r}T_\ell\right)\wedge\left(\bigwedge_{r<q<j}\widetilde{T}_{n_q,q}\right)\wedge(T_{n_j,j}+T_j)\wedge\left(\bigwedge_{k>j}T_{n_k,k}\right)~.\]
Owing to~\eqref{eq:inegaliteW}, the next lemma ends the proof.
\end{proof}

\begin{lemma}
Let $C>0$ be as above. For any $(c,a)\in \supp(W)$, we have
\[\left|\tilde{g}_{n_r,r}(c,a)-g_r(c,a)\right|\leq 2C\frac{\sigma(n_r)}{d_{n_r}}~.\]
\end{lemma}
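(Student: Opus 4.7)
The plan is to analyze $\supp W$ carefully and then split into cases according to the value of $G(c,a)$ at the point $(c,a)\in\supp W$.

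First I would locate $\supp W$. For $\ell<r$ the factor $T_\ell$ is supported in the activity locus of $c_\ell$, which lies in $\partial\{g_\ell>0\}\subset\{g_\ell=0\}$, hence $g_\ell(c,a)=0$. For $k>j$, $\supp T_{n_k,k}=\Per^*_k(n_k)$ consists of parameters where $c_k$ is periodic, hence in the filled Julia set, so $g_k(c,a)=0$. The factor $T_{n_j,j}+T_j$ likewise forces $g_j(c,a)=0$ (in the first summand $c_j$ is periodic, in the second it is active). For $r<q<j$, on each of the open sets $\{g_{n_q,q}>-2C\sigma(n_q)/d_{n_q}\}$ and $\{g_{n_q,q}<-2C\sigma(n_q)/d_{n_q}\}$ the function $\tilde{g}_{n_q,q}$ is pluriharmonic (on the first, $P_{n_q,q}\neq 0$ so $\tilde g_{n_q,q}=g_{n_q,q}=d_{n_q}^{-1}\log|P_{n_q,q}|$; on the second $\tilde{g}_{n_q,q}$ is constant), so $\supp\widetilde{T}_{n_q,q}\subset\{g_{n_q,q}=-2C\sigma(n_q)/d_{n_q}\}$.

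Now fix $(c,a)\in\supp W$. By the description above, $G(c,a)=\max_\ell g_\ell(c,a)$ is attained at some index in $\{r,r+1,\ldots,j-1\}$. If $G(c,a)=0$ then $(c,a)\in\mathcal{C}_d$ and $g_r(c,a)=0$; Proposition~\ref{prop:estimateabove} provides $g_{n_r,r}(c,a)\leq C\sigma(n_r)/d_{n_r}$ while the cap guarantees $\tilde{g}_{n_r,r}(c,a)\geq-2C\sigma(n_r)/d_{n_r}$, giving $|\tilde{g}_{n_r,r}-g_r|(c,a)\leq 2C\sigma(n_r)/d_{n_r}$. If $G(c,a)>0$ and the maximum is attained at $m=r$, then $g_r>0$, so Propositions~\ref{prop:estimateabove} and~\ref{prop:estimates} combine to give $|g_{n_r,r}-g_r|(c,a)\leq C\sigma(n_r)/d_{n_r}$; the resulting lower bound $g_{n_r,r}\geq g_r-C\sigma(n_r)/d_{n_r}>-2C\sigma(n_r)/d_{n_r}$ deactivates the cap so $\tilde{g}_{n_r,r}=g_{n_r,r}$, and the conclusion follows.

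The hard part I expect is ruling out the remaining case: $G(c,a)>0$ with the maximum attained at some $m\in\{r+1,\ldots,j-1\}$. I would proceed by contradiction. On $\supp\widetilde{T}_{n_m,m}$ the previous analysis gives $g_{n_m,m}(c,a)=-2C\sigma(n_m)/d_{n_m}$, while the hypothesis $g_m(c,a)=G(c,a)>0$ makes Proposition~\ref{prop:estimates} applicable at the index $m$, yielding $g_{n_m,m}(c,a)\geq g_m(c,a)-C\sigma(n_m)/d_{n_m}$. Combining these two facts forces $g_m(c,a)\leq-C\sigma(n_m)/d_{n_m}<0$, contradicting $g_m(c,a)>0$. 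The design choice that makes the argument work is that the cap level $-2C\sigma(n_m)/d_{n_m}$ lies strictly below the lower bound $-C\sigma(n_m)/d_{n_m}$ coming from Proposition~\ref{prop:estimates}; this slack is precisely what pins the maximum of $G$ at the index $r$ on $\supp W$.
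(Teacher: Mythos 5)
Your proof is correct and follows essentially the same route as the paper's: you identify that $g_\ell=0$ on $\supp(W)$ for the outer indices, that $\supp\widetilde{T}_{n_q,q}$ lies in the level set $\{g_{n_q,q}=-2C\sigma(n_q)/d_{n_q}\}$, and you rule out the maximum of $G$ being carried by an intermediate index via exactly the paper's contradiction combining that level-set identity with Proposition~\ref{prop:estimates}, before concluding with the same two cases $g_r=0$ and $g_r=G>0$ using Propositions~\ref{prop:estimateabove} and~\ref{prop:estimates}. The only difference is cosmetic (the paper first proves $g_r=G$ on $\supp(W)$ and then splits into cases, while you fold that step into a three-case analysis, and you additionally make explicit the index $\ell=j$ and the pluriharmonicity argument that the paper leaves implicit).
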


\begin{proof}
On $\textup{supp}(W)$, we have $g_\ell=0$ for all $\ell<r$ and all $\ell\geq j$. Indeed, for such $l$, by definition of $W$ we are in the set where the critical point $c_l$ has bounded orbit (observe that if $(c,a)\in \textup{supp}{T}_{n_l,l}$ then the critical point has periodic hence bounded orbit). Moreover, for any $r<\ell<j$, we have $|P_{n_\ell,\ell}|=e^{-2C\sigma(n_\ell)}$. Let $(c,a)\in \supp(W)$ such that $g_r(c,a)<G(c,a)$. This implies the existence of $r<\ell<j$ such that $G(c,a)=g_\ell(c,a)>g_r(c,a)\geq 0$. Then, by Proposition~\ref{prop:estimates}:
\[-2C\frac{\sigma(n_\ell)}{d_{n_\ell}}-g_\ell(c,a)=g_{n_\ell,\ell}(c,a)-g_\ell(c,a)\geq -C\frac{\sigma(n_\ell)}{d_{n_\ell}}~,\]
which implies $g_\ell<0$. This is a contradiction so we know that $g_r(c,a)=G(c,a)$ for all $(c,a)\in \supp(W)$. 

Assume now that $(c,a)\in\supp(W)$ satisfies $g_r(c,a)=0$. By the above, $(c,a)\in\mathcal{C}_d$, hence by Proposition~\ref{prop:estimateabove}:
\[g_{n_r,r}(c,a)\leq C\frac{\sigma(n_r)}{d_{n_r}}   \quad \mathrm{so} \quad \tilde{g}_{n_r,r}(c,a)\leq C\frac{\sigma(n_r)}{d_{n_r}}  ~.\]
Whence, by definition of $\tilde{g}_{n_r,r}(c,a)$:
\[-2 C\frac{\sigma(n_r)}{d_{n_r}}\leq \tilde{g}_{n_r,r}(c,a)= \tilde{g}_{n_r,r}(c,a)-g_r(c,a)\leq C\frac{\sigma(n_r)}{d_{n_r}}~.\]
Assume finally that $(c,a)\in \supp(W)$ satisfies $G(c,a)=g_r(c,a)>0$. By Propositions~\ref{prop:estimateabove} and~\ref{prop:estimates}, we find
\[\left|g_{n_r,r}(c,a)-g_r(c,a)\right|\leq C\frac{\sigma(n_r)}{d_{n_r}}~.\]
 In particular, $g_{n_r,r}(c,a)=\tilde{g}_{n_r,r}(c,a)$ and the estimate follows.
\end{proof}

\subsection{Speed of convergence: proof of Theorem~\ref{tm:centers}}

We are now in position to prove Theorem~\ref{tm:centers}. Pick $\varphi\in \mathcal{C}_c^2(\C^{d-1})$. We want to estimate
\[\Lambda_\un(\varphi):=\left\langle \mu_{\un}-\mu_\bif,\varphi\right\rangle=\left\langle \bigwedge_{j=0}^{d-2}T_{n_j,j}-\bigwedge_{j=0}^{d-2}T_j,\varphi\right\rangle~.\]
We shall decompose $\mu_{\un}-\mu_\bif$ into several pieces using a classical finite telescoping sum argument:
\begin{eqnarray}
\mu_{\un}-\mu_\bif & = & \sum_{j=0}^{d-2}\left(\bigwedge_{\ell<j}T_\ell\wedge(T_{n_j,j}-T_j)\wedge\bigwedge_{k>j}T_{n_k,k}\right)=\sum_{j=0}^{d-2}S_j~.\label{eq:decomposition}
\end{eqnarray}
For $j\geq1$, using $T_j=\widetilde{T}_{n_j,j}+(T_j-\widetilde{T}_{n_j,j})$, we rewrite the $j$-th term $S_j$ of \eqref{eq:decomposition} and find
\begin{eqnarray*}
S_j & = & \bigwedge_{\ell<j}\widetilde{T}_{n_\ell,\ell}\wedge(T_{n_j,j}-T_j)\wedge\bigwedge_{k>j}T_{n_k,k}\\
&  & +  \sum_{r<j-1}\left(\bigwedge_{\ell<r}T_\ell\wedge(T_r-\widetilde{T}_{n_r,r})\wedge \bigwedge_{r<q<j}\widetilde{T}_{n_q,q}\wedge(T_{n_j,j}-T_j)\wedge\bigwedge_{k>j}T_{n_k,k}\right)~.
\end{eqnarray*}
According to Lemma~\ref{lm:facile}, for $j\geq1$, we find
\[|\langle S_j,\varphi\rangle|\leq \left|\left\langle\bigwedge_{\ell<j}\widetilde{T}_{n_\ell,\ell}\wedge(T_{n_j,j}-T_j)\wedge\bigwedge_{k>j}T_{n_k,k},\varphi\right\rangle\right|+4C\sum_{r<j}\frac{\sigma(n_{r})}{d_{n_{r}}}\|\varphi\|_{\mathcal{C}^2}~.\]
We now want to give an estimate for
\[J_j:=\left\langle\bigwedge_{\ell<j}\widetilde{T}_{n_\ell,\ell}\wedge(T_{n_j,j}-T_j)\wedge\bigwedge_{k>j}T_{n_k,k},\varphi\right\rangle~.\]
To do so, we shall use the decomposition $\widetilde{T}_{n_\ell,\ell}=(2\pi)^{-1}\int_{[0,2\pi]}T_{n_\ell,\ell}^{\delta_\ell(\theta)}d\theta$ and Fubini:
\[J_j=\frac{1}{(2\pi)^{j-1}}\int_{[0,2\pi]^{j-1}}\left\langle\bigwedge_{\ell<j}T^{\delta_\ell(\theta_\ell)}_{n_\ell,\ell}\wedge(T_{n_j,j}-T_j)\wedge\bigwedge_{k>j}T_{n_k,k},\varphi\right\rangle d\theta_0\cdots d\theta_{j-1}~.\]
By Theorem~\ref{tm:sauvelavie}, there exists a constant $C'>0$ depending only on $d$ such that
\begin{eqnarray*}
|J_j| & \leq & \frac{1}{(2\pi)^{j-1}}\int_{[0,2\pi]^{j-1}}\left|\left\langle\bigwedge_{\ell<j}T^{\delta_\ell(\theta_\ell)}_{n_\ell,\ell}\wedge(T_{n_j,j}-T_j)\wedge\bigwedge_{k>j}T_{n_k,k},\varphi\right\rangle\right| d\theta_0\cdots d\theta_{j-1}\\
& \leq & \frac{1}{(2\pi)^{j-1}}\int_{[0,2\pi]^{j-1}}C' \frac{\sigma(n_j)}{d_{n_j}}\|\varphi\|_{\mathcal{C}^2} d\theta_0\cdots d\theta_{j-1}=C'\frac{\sigma(n_j)}{d_{n_j}}\|\varphi\|_{\mathcal{C}^2}~.
\end{eqnarray*}
All we have done so far summarizes as follows
\[\sum_{j=1}^{d-2}|\langle S_j,\varphi\rangle|\leq \sum_{j=1}^{d-2}\left(C'\frac{\sigma(n_j)}{d_{n_j}}+4C\sum_{r<j}\frac{\sigma(n_{r})}{d_{n_{r}}}\right)\|\varphi\|_{\mathcal{C}^2}~.\]
All which is left to do is to estimate $\langle S_0,\varphi\rangle$. By Theorem~\ref{tm:sauvelavie}, we have
\[\left|\langle S_0,\varphi\rangle\right|\leq C'\frac{\sigma(n_0)}{d_{n_0}}\|\varphi\|_{\mathcal{C}^2}~,\]
and we have finally proven
\[\left|\Lambda_\un(\varphi)\right|\leq \sum_{j=0}^{d-2}|\langle S_j,\varphi\rangle|\leq \left((d-1)C'+4C\sum_{j=1}^{d-2}(j-1)\right)\max_{0\leq j \leq d-2 }\left(\frac{\sigma(n_j)}{d_{n_j}}\right)\|\varphi\|_{\mathcal{C}^2}~,\]
which ends the proof, since $(d-1)C'+4C\sum_{j=1}^{d-2}(j-1)=(d-1)C'+2C(d-1)(d-2)$ depends only on $d$.

\section{Distribution of polynomials with prescribed multipliers}\label{sec:pluripolar}

The aim of this section is to derive Theorem~\ref{cor:centers} and Theorem~\ref{tm:principal} from Theorem~\ref{tm:centers}. We begin with the proof of Theorem~\ref{cor:centers} which is based on the same idea as that of \cite[Theorem 3]{favregauthier}.  For our purpose, we have to refine the techniques used in \cite{favregauthier}. We then prove Theorem~\ref{tm:principal} using DSH techniques (see e.g. \cite{ThelinVigny1}).

\subsection{Distribution of polynomials with $(d-1)$ attracting cycles}\label{sec:pernw}
Pick any $(d-1)$-tuple $\un=(n_0,\ldots,n_{d-2})$ of pairwise distinct positive integers with $n_0\geq2$. 

Let $\Omega$ be hyperbolic component of $\mathcal{C}_d$. Assume the for any $(c,a)\in\Omega$, the polynomial $P_{c,a}$ admits $(d-1)$ distinct attracting cycles $C_0,\ldots,C_{d-2}$ of  respective exact periods $n_0,\ldots,n_{d-2}$ and let $\rho_i(c,a)\in\D$ be the multiplier of the attracting cycle $C_i$. We call the map $\rho_\Omega:=(\rho_0,\ldots,\rho_{d-2}):\Omega\to\D^{d-1}$ the \emph{multipliers map} of the component $\Omega$ where $\D^{d-1}$ is the unit polydisk in $\C^{d-1}$. It is known that it is a biholomorphism (see \S~\ref{Sec:basics}).

~


Recall that we denoted $D_{\un}:=\prod_jd_{n_j}$ and that there exists polynomials 
\[p_n:\C^{d-1}\times\C\longrightarrow\C\]
detecting parameters having a cycle of exact period $n$ and given multiplier $w\in\C$ (see Lemma~\ref{lm:pernw} in \S~\ref{Sec:dynatomic}).
Moreover, we have
\begin{itemize}
\item $\deg_{c,a}p_n(\cdot,w)=(d-1)d_n$ for any $w\in\C$. Indeed, $\deg_{c,a}p_n(\cdot,w)$ is independent of $w$, we can write $[p_n(\cdot,0)=0]=\sum_j[\Per^*_j(n)]$ and $\Per^*_j(n)$ has degree $d_n$,
\item $\deg_{w}p_n(c,a,\cdot)=d_n/n$ for any $(c,a)\in\C^{d-1}$ (see \cite[\S 2.1]{BB2}).
\end{itemize}
For $w\in\C$ and $n\geq1$, let
\[\Per^*(n,w):=\{(c,a)\in\C^{d-1}\, ; \ p_n(c,a,w)=0\}.\]
Let $w_0,\ldots,w_{d-2}\in\D$ and let $n_1,\ldots,n_{d-2}$ be pairwise distinct positive integers with $n_0\geq2$. According to \cite[\S 6]{favregauthier}, for any $(c,a)\in\bigcap_j\Per^*(n_j,w_j)$, the intersection between the $\Per^*(n_j,w_j)$ is smooth and transverse at $(c,a)$ and $\textup{Card}(\bigcap_j\Per^*(n_j,w_j))$ is independent of $(w_0,\ldots,w_{d-2})$.
In particular, we have the following.

\begin{proposition}\label{prop:multip}
Let $w_0,\ldots,w_{d-2}\in\D$ and let $n_1,\ldots,n_{d-2}$ be pairwise distinct positive integers with $n_0\geq2$. Then $\textup{Card}(\bigcap_j\Per^*(n_j,w_j))=(d-1)!D_{\un}$~.
\end{proposition}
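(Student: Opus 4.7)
The plan is to exploit the fact that each parameter in the intersection $\bigcap_j \Per^*(n_j, w_j)$ is hyperbolic, since $|w_j|<1$, and to reduce the intersection, on each hyperbolic component, to the intersection of coordinate hyperplanes via the biholomorphic multipliers map recalled at the beginning of Section~\ref{sec:pernw}. Fix $(c,a) \in \bigcap_j \Per^*(n_j, w_j)$. By Lemma~\ref{lm:pernw}, $P_{c,a}$ admits for each $j$ a cycle of exact period $n_j$ and multiplier $w_j$, and since the $n_j$ are pairwise distinct these yield $d-1$ distinct attracting cycles. Fatou's theorem says that each of these attracting cycles attracts at least one critical point; as $P_{c,a}$ has exactly $d-1$ critical points $c_0,\ldots,c_{d-2}$, there is a unique permutation $\sigma \in \mathfrak{S}_{d-1}$ such that $c_k$ lies in the immediate basin of the cycle of period $n_{\sigma(k)}$, and $(c,a)$ belongs to a hyperbolic component $\Omega \subset \mathcal{H}_{\sigma(\un)}$.

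On $\Omega$, for any $(c',a')$, the only cycle of period $n_j$ whose multiplier has modulus less than $1$ is the attracting one containing $c_{\sigma^{-1}(j)}$, because any other cycle of period $n_j$ is repelling or neutral and therefore has multiplier of modulus $\geq 1 > |w_j|$. Writing $\rho_\Omega = (\rho_0,\ldots,\rho_{d-2}) \colon \Omega \to \D^{d-1}$ for the biholomorphic multipliers map, we thus have
\[\Per^*(n_j, w_j) \cap \Omega = \{(c',a') \in \Omega \, : \, \rho_{\sigma^{-1}(j)}(c',a') = w_j\},\]
i.e.\ a coordinate hyperplane in the chart $\rho_\Omega$. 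Consequently the $d-1$ hypersurfaces $\Per^*(n_j, w_j)$ meet $\Omega$ transversely at the single point $\rho_\Omega^{-1}(w_{\sigma(0)},\ldots,w_{\sigma(d-2)})$, giving smoothness and transversality at every point of the intersection.

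For the count, I would note that the connected components of $\mathcal{H}_{\sigma(\un)}$ are in bijection with their postcritically finite centers, which form the intersection $\bigcap_k \Per_k^*(n_{\sigma(k)})$. By Theorem~\ref{tm:transverseFG} this intersection is smooth and transverse, so Bezout yields $\prod_k d_{n_{\sigma(k)}} = D_{\un}$ centers. The sets $\mathcal{H}_{\sigma(\un)}$ are pairwise disjoint because the assignment of critical points to attracting cycles is intrinsic to each hyperbolic component. Summing over the $(d-1)!$ permutations and using that each component contributes exactly one point to $\bigcap_j \Per^*(n_j,w_j)$ yields the claimed cardinality $(d-1)!\,D_{\un}$.

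The main technical wrinkle I foresee is that Theorem~\ref{tm:transverseFG} requires the first entry of the tuple to be $\geq 2$, whereas permuting $\un$ may place a $1$ in position $0$. This only occurs when some $n_j = 1$, in which case one can either invoke a symmetric reformulation of Epstein's transversality after reindexing the critical points, or handle the case directly using $\Per^*_0(1) = \{a=0\}$; in both approaches the Bezout count $D_{\un}$ is preserved for every permutation.
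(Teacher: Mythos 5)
Your argument takes a genuinely different route from the paper's. The paper simply cites \cite[\S 6]{favregauthier} both for smoothness/transversality and for the fact that the cardinality does not depend on $\uw$, and then computes the cardinality at $\uw=\underline{0}$, where $\Per^*(n,0)=\bigcup_j\Per^*_j(n)$, by summing $\textup{Card}\big(\bigcap_j\Per^*_{s(j)}(n_j)\big)=D_{\un}$ over $s\in\mathfrak{S}_{d-1}$. You instead localize in hyperbolic components: transversality comes from identifying each $\Per^*(n_j,w_j)$ with a coordinate hyperplane in the multipliers chart, and the count comes from enumerating components through their centers, so you never need the independence-on-$\uw$ statement. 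When all $n_j\geq 2$ this is correct and more self-contained, granting the fact stated at the beginning of Section~\ref{sec:pernw} that the multipliers map is a biholomorphism and the count of centers from Theorem~\ref{tm:transverseFG} plus B\'ezout as in Section~\ref{Sec:basics}.

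The gap is in your last paragraph, i.e. precisely when some $n_{j_0}=1$ (allowed by the hypotheses, which only require $n_0\geq2$). The claim that ``the B\'ezout count $D_{\un}$ is preserved for every permutation'' is false: $P_{1,0}(c,a)=P_{c,a}(0)=a^d$, so $\Per^*_0(1)$ is the hyperplane $\{a=0\}$, of degree $1$ and not $d_1=d$ as a reduced variety. Hence, for the $(d-2)!$ permutations assigning period $1$ to $c_0$, the set of centers $\bigcap_k\Per^*_{s(k)}(n_k)$ has at most $D_{\un}/d$ points. Moreover, on the corresponding components (those meeting $\{a=0\}$) the multipliers map is \emph{not} a biholomorphism: since $P_{c,\zeta a}=P_{c,a}$ for $\zeta^d=1$ and such a component contains its center $(c^*,0)$, it is invariant under $a\mapsto\zeta a$ and $\rho_\Omega$ is invariant under this action (indeed the multiplier of the fixed point near $0$ is a unit times $a^d$), so $\rho_\Omega$ is a degree-$d$ branched cover of $\D^{d-1}$ and each such component contains $d$ points of $\bigcap_j\Per^*(n_j,w_j)$ when $w_{j_0}\neq0$, and only its center when $w_{j_0}=0$. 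When $w_{j_0}\neq0$ these two corrections compensate, so the cardinality formula survives, but your argument as written (exactly one point per component, $D_{\un}$ components for every permutation) does not prove it; and when $w_{j_0}=0$ the stated count actually fails: for $d=3$, $\un=(2,1)$, $\uw=(w_0,0)$, the intersection has at most $d_2\cdot d_1+d_2\cdot 1=18+6=24$ points, not $(d-1)!D_{\un}=36$. To be fair, the paper's own proof asserts $\textup{Card}\big(\bigcap_j\Per^*_{s(j)}(n_j)\big)=D_{\un}$ for every $s$ without justification in this case, so the clean fix for both arguments is either to assume $\min_j n_j\geq2$, or to treat the period-one case separately, taking into account the degree of $\{a=0\}$ and the degree-$d$ multiplier map on the components it meets.
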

\begin{proof}
By the above, it is sufficient to estimate this cardinal for $w_0=\cdots=w_{d-2}=0$. For $n\geq1$, it is easy to see that $\Per^*(n,0)=\bigcup_{j=0}^{d-2}\Per^*_j(n)$. In particular,
\[\textup{Card}\left(\bigcap_{j=0}^{d-2}\Per^*(n_j,0)\right)=\sum_{s\in\mathfrak{S}_{d-1}}\textup{Card}\left(\bigcap_{j=0}^{d-2}\Per^*_{s(j)}(n_j)\right)~,\]
which ends the proof, since $\textup{Card}(\bigcap_{j=0}^{d-2}\Per^*_{s(j)}(n_j))=D_{\un}$ for any $s\in\mathfrak{S}_{d-1}$.
\end{proof}

 We are now in position to prove Theorem~\ref{cor:centers}. We deduce Theorem \ref{cor:centers} from Theorem~\ref{tm:centers}. Our strategy mixes arguments from the proof of Corollary~\ref{cor:Mandelbrot} and from the proof of \cite[Theorem 3]{favregauthier}.
 
\begin{proof}[Proof of Theorem~\ref{cor:centers}]
Let us first prove our result for $\uw=(0,\ldots,0)$. The set of parameters $(c,a)\in\C^{d-1}$ such that $P_{c,a}$ has $(d-1)$ distinct super-attracting cycles of respective exact periods $n_0,\ldots,n_{d-2}$ coincides with
\[\bigcup_{\sigma\in\mathfrak{S}_{d-1}}\bigcap_{j=0}^{d-2}\Per^*_{\sigma(j)}(n_j)~.\]
As the intersection $\bigcap_{j=0}^{d-2}\Per^*_{\sigma(j)}(n_j)$ is smooth and transverse for all $\sigma\in\mathfrak{S}_{d-1}$, the measure $\mu_{\un,\underline{0}}$ is the probability measure which is proportional to
\[\sum_{\sigma\in\mathfrak{S}_{d-1}}\frac{1}{D_{\un}}\bigwedge_{j=0}^{d-2}[\Per^*_{\sigma(j)}(n_j)]\]
which has mass $(d-1)!$. As a consequence, we have
\[\mu_{\un,\underline{0}}=\sum_{\sigma\in\mathfrak{S}_{d-1}}\frac{1}{(d-1)!D_{\un}}\bigwedge_{j=0}^{d-2}[\Per^*_{\sigma(j)}(n_j)]~.\]
Fix now $\varphi\in\mathcal{C}^1_c(\C^{d-1})$. By a classical interpolation argument, Theorem~\ref{tm:centers} gives a constant $C_1$ depending only on $d$, such that
\[|\langle\mu_{\un,\underline{0}},\varphi\rangle-\langle\mu_\bif,\varphi\rangle|\leq C_1 \left(\max_j\left(\frac{\sigma(n_j)}{d_{n_j}}\right)\right)^{1/2}\|\varphi\|_{\mathcal{C}^1}~.\]
This is the wanted result when $\uw=\underline{0}$.

~

Pick now $\uw\in\D^{d-1}\setminus\{\underline{0}\}$ and $\varphi\in\mathcal{C}^1_c(\C^{d-1})$. We write $\uw[0]\pe (0,\ldots,0)$ and for any $1\le j\le d-1$, we set $\uw[j]\pe (w_0,\ldots,w_{j-1},0,\ldots,0)$ and $\mu_j:=\mu_{\un,\uw[j]}$, so that $\uw[d-1]=\uw$, $\mu_0=\mu_{\un,\underline{0}}$ and $\mu_{d-1}=\mu_{\un,\uw}$. To conclude, it is sufficient to prove that
\[|\langle\mu_j,\varphi\rangle-\langle\mu_{j+1},\varphi\rangle|\leq C'\left(\max_{0\leq i\leq d-2}\left\{\frac{1}{d_{n_j}},\frac{-1}{d_{n_j}\log|w_i|}\right\}\right)^{1/2}\|\varphi\|_{\mathcal{C}^1},\]
for some constant $C'>0$ depending only on $d$. If $w_j=0$, there is nothing to prove, since in that case $\uw[j]=\uw[j+1]$. We thus may assume that $w_j\neq0$. we consider the algebraic subvariety 
$$C_j:=\bigcap_{h< j}\Per^*(n_h,w_h)\cap\bigcap_{l>j}\Per^*(n_l,0)~.$$
Observe that  $C_j \cap  \Per^*(n_j,0)=\bigcap_k\Per^*(n_k,\uw[j]_k)$ is finite, hence $C_j$ is an algebraic curve and that the intersections are smooth and transverse, by Proposition~\ref{prop:multip}.

Let $X_j:=\bigcap_k\Per^*(n_k,\uw[j]_k)$. Pick any point $(c,a) \in X_j$, and let $\Omega_{c,a}$ be the hyperbolic component containing $(c,a)$. Using \cite[Theorem 6.8]{favregauthier}, we define $\phi_{c,a,j} : \D(0, |w_j|^{-1/2}) \to \Omega_{c,a}$ by setting 
$$\phi_{c,a,j}(t) :=\rho_{c,a}^{-1} (w_0, \ldots , w_{j-1}, t w_j , 0, \ldots 0) ~.$$
By construction, the disks
\[\D_{c,a,j} \pe \phi_{c,a,j}( \D(0, |w_j|^{-1/2})) \ \text{and} \ \D_{c,a,j}' \pe \phi_{c,a,j}( \D(0, 1))\Subset\D_{c,a,j}\]
are included in $\Omega_{c,a} \cap C_j$, 
$\phi_{c,a,j}(0) = (c,a)$ and $\phi_{c,a,j} (1)$ belongs to $X_{j+1}$.
Any hyperbolic component contains a unique point in $X_j$, hence
the collection of disks $\D_{c,a,j}$ is disjoint.
Note also  that  any point in $X_{j+1}$ belongs to a hyperbolic component, and thus
is equal to  $\phi_{c,a,j} (1)$ for a unique $(c,a) \in X_j$.

By Cauchy-Schwarz inequality,
\begin{eqnarray*}
|\langle\mu_j,\varphi\rangle-\langle\mu_{j+1},\varphi\rangle| & \leq & \frac{1}{(d-1)!\cdot D_{\un}}\sum_{X_j}|\varphi(\phi_{c,a,j}(0))-\varphi(\phi_{c,a,j}(1))|\\
& \leq & \frac{1}{(d-1)!\cdot D_{\un}}\|\varphi\|_{\mathcal{C}^1}\sum_{X_j}\textup{Diam}_\beta\left(\D_{c,a,j}'\right)\\
& \leq & \frac{\|\varphi\|_{\mathcal{C}^1}}{\sqrt{(d-1)!\cdot D_{\un}}}\left(\sum_{X_j}\left(\textup{Diam}_\beta(\D_{c,a,j}')\right)^2\right)^{1/2}.
\end{eqnarray*}
 Recall that $\rho_{c,a}$ is a biholomorphism for any $(c,a)\in X_j$. In particular, 
\[\textup{mod}\left(\D_{c,a,j}\setminus\overline{\D_{c,a,j}'}\right)=\textup{mod}(\D(0,|w_j|^{-1/2})\setminus\overline{\D})=\frac{-1}{4\pi}\log|w_j|~.\]
 By Lemma~\ref{lm:BriendDuval}, we deduce
\begin{eqnarray*}
|\langle\mu_j,\varphi\rangle-\langle\mu_{j+1},\varphi\rangle| & \leq & \frac{\|\varphi\|_{\mathcal{C}^1}}{\sqrt{(d-1)!\cdot D_{\un}}}\left(\sum_{(c,a)\in X_j}\frac{\tau}{\min(1,\frac{-1}{4\pi}\log|w|)}\textup{Area}_\beta(\D_{c,a,j})\right)^{1/2}~.
\end{eqnarray*}
On the other hand, by B\'ezout, we have
\begin{eqnarray*}
\deg(C_j) & = & \sum_{i=0}^{d-2}\sum_{\underset{\sigma(j)=i}{\sigma\in\mathfrak{S}_{d-1},}}\deg\left(\bigcap_{k\neq j}\Per^*_{\sigma(k)}(n_k)\right)=\sum_{i=0}^{d-2}\sum_{\underset{\sigma(j)=i}{\sigma\in\mathfrak{S}_{d-1},}}\prod_{k\neq j}d_{n_k}\\
& = &\sum_{i=0}^{d-2}\sum_{\underset{\sigma(j)=i}{\sigma\in\mathfrak{S}_{d-1},}}\frac{D_{\un}}{d_{n_j}}=(d-1)!\frac{D_{\un}}{d_{n_j}}.
\end{eqnarray*}
From which, since $\Omega_{c,a}\cap\Omega_{c',a'}=\emptyset$ for $(c,a)\neq (c',a')\in X_j$ and $\D_{c,a,j}\subset \Omega_{c,a}\cap C_j$, we deduce
\[\sum_{(c,a)\in X_j}\textup{Area}_\omega(\D_{c,a,j})\leq \textup{Area}_\omega(C_j)\leq \deg(C_j)=(d-1)!\frac{D_{\un}}{d_{n_j}}.\]
Now, since $\Omega_{c,a}\subset\mathcal{C}_d\subset\B(0,16\sqrt{d-1})$ (see the proof of Theorem~\ref{tm:centers}), there exists a constant $C_2>0$ such that $\beta\leq C_2\omega$ on $\Omega_{c,a}$, for all $(c,a)\in X_j$. Hence $\textup{Area}_\beta(\D_{c,a,j})\leq C_2\textup{Area}_\omega(\D_{c,a,j})$. Combined with the above, this gives
\begin{eqnarray*}
|\langle\mu_j,\varphi\rangle-\langle\mu_{j+1},\varphi\rangle| & \leq & \frac{16\sqrt{\pi\tau(d-1)C_2}}{\sqrt{D_{\un}}}\|\varphi\|_{\mathcal{C}^1} \left(\max\left\{1,\sqrt{\frac{-4\pi}{\log|w_j|}}\right\}\right) \sqrt{\frac{D_{\un}}{d_{n_j}}}\\
& \leq & C_3\|\varphi\|_{\mathcal{C}^1}\left(\max_{0\leq \ell\leq d-2}\left\{\frac{1}{d_{n_\ell}},\frac{-1}{d_{n_\ell}\log|w_\ell|}\right\}\right)^{1/2},
\end{eqnarray*}
where $C_3=32\pi\sqrt{\tau(d-1)C_2}$.
\end{proof}

\subsection{Convergence for multipliers outside a pluripolar set: Theorem~\ref{tm:principal}}
For the content of the present, we are inspired by the techniques of \cite{ThelinVigny1} (see also \cite{dinhsibony2}). 
Let $\omega$ be the Fubini-Study form on $\p^1$ and let $\pi_j:(\p^1)^k\to \p^1$ be
the canonical projection to the $j$-th factor. We denote $\omega_j:= \pi_j^*(\omega)$. 
We define a smooth probability measure $\Omega$ on  $(\p^1)^k $ by $\Omega:= \omega_1\wedge \dots \wedge \omega_k$. 

 First, recall some facts on DSH functions. Let $k\geq1$. Recall that a probability measure
$\nu$ in $(\p^1)^k$ has \emph{bounded quasi-potentials} (or is PB) if $\nu$ admits a negative quasi-potential 
$U$ (that is a negative $(k-1,k-1)$ current $U$ such that $dd^c U+ \Omega= \nu$ in the sense of current) such that $|\langle U, S\rangle| \leq C$ for any positive smooth form $S$ 
of bidegree $(1,1)$ and mass $1$ (\cite{DinhSibonyregular}). Such nmapping $S \mapsto \langle U, S\rangle$ can be extended to any positive 
closed current of bidegree $(1,1)$ and mass $1$  with the same bound $|\langle U, S\rangle| \leq C$ (using structural varieties in the space of currents).
An interesting example of PB measure is the tensor product of measures in $\p^1$ with bounded quasi-potentials (in which case, having bounded quasi-potential is equivalent to the fact that the quasi-potential is bounded as a qpsh function).

We say that a function $\varphi$ on $(\p^1)^{d-1}$ is DSH if, outside a pluripolar set, it can be written as a difference of qpsh functions (for example, $\varphi \in \mathcal{C}^2$). Let $DSH\left(\left(\p^1\right)^{d-1}\right)$ be the space of such functions. We write $dd^c \varphi=T^+-T^-$ where $T^\pm$ are positive closed currents of bidegree $(1,1)$.
Let $\nu$ be a PB measure on $(\p^1)^{d-1}$. Then, the following defines a norm on the space $DSH\left(\left(\p^1\right)^{d-1}\right)$:
$$\|\varphi\|_{\nu}:=  \|\varphi\|_{L^1(\nu)} +\inf \|T^\pm\|$$
where the infimum is on all the decompositions $dd^c \varphi=T^+-T^-$. It turns out that taking another PB measure $\nu'$ gives an equivalent norm on $DSH\left(\left(\p^1\right)^{d-1}\right)$ (see e.g. \cite[p. 283]{dinhsibony2}).

Let $\varphi$ be a $\mathcal{C}^2$ function on $(\p^1)^{d-1}$, in particular $\varphi$ is DSH and let $\un=(n_0,\ldots,n_{d-2})$ be a $(d-1)$-uple of pairwise distinct positive integers with $n_0\geq2$. Consider the function:
$$ \Phi^\varphi_\un:\uw=(w_0,\ldots,w_{d-2}) \in \C^{d-1} \mapsto \langle \varphi , \mu_{\un, \uw} \rangle$$
where 
\[\mu_{\un,\uw}:=\frac{1}{(d-1)!\prod_j d_{n_{j}}}\bigwedge_{j=0}^{d-2}[\Per^*(n_{j},w_j)]~.\]
This intersection is well defined outside an analytic (hence pluripolar) set $\mathcal{E}_\un$. So, if we consider the pluripolar set $\mathcal{E}:=\cup_\un \mathcal{E}_\un$, the  map $\Phi^\varphi_\un$ is well defined outside $\mathcal{E}$ for all $\un$. Adding a pluripolar set if necessary, we can assume that this stands for $w\notin \mathcal{E}$ and all $j$ then $w_j\neq 1$. 

~

For the rest of the subsection, we let $\nu$ be a smooth probability measure with support in $\D(0,1/2)^{d-1}$. Such a measure indeed exists and is PB, furthermore, we will be able to apply uniform estimate for $\nu$ using Theorem~\ref{cor:centers}.

The main result we need here is the following.

 \begin{lemma}\label{lm:DSH}
There exists a constant $C_\varphi$ depending only on $\varphi$ and on $d$ such that for all $\un$, the function $\Phi_\un^\varphi$ is DSH with:
\[\|\Phi^\varphi_\un - \langle \mu_\bif, \varphi \rangle \|_{\nu} \leq C_\varphi \max_j\left( \frac{1}{n_j} \right). \]
\end{lemma}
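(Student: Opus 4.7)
The strategy is to bound separately the two pieces of the DSH seminorm, $\|\Phi^\varphi_\un-\langle\mu_\bif,\varphi\rangle\|_\nu=\|\Phi^\varphi_\un-\langle\mu_\bif,\varphi\rangle\|_{L^1(\nu)}+\inf\|T^\pm\|$. For the $L^1(\nu)$-part, since $\supp(\nu)\subset\D(0,1/2)^{d-1}$, we have $|\log|w_j||\geq\log 2$ uniformly for $\uw\in\supp(\nu)$, so Theorem~\ref{cor:centers} applied pointwise gives
\[
|\Phi^\varphi_\un(\uw)-\langle\mu_\bif,\varphi\rangle|\leq C_d\|\varphi\|_{\mathcal{C}^1}\Big(\max_j d^{-n_j}\Big)^{1/2}\leq C'_d\|\varphi\|_{\mathcal{C}^1}\max_j\frac{1}{n_j},
\]
using $d^{-n/2}\leq C_d/n$ for $n\geq1$, $d\geq3$. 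Integrating against the probability measure $\nu$ yields the desired $L^1(\nu)$-bound.

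For the second piece, I realize $\Phi^\varphi_\un$ as a pushforward. Compactify $\C^{d-1}_\lambda$ and $\C^{d-1}_{\uw}$ to $\p^{d-1}$ and set $V_j\pe\overline{\{p_{n_j}(\lambda,w_j)=0\}}\subset\p^{d-1}\times\p^{d-1}$. Since $\deg_\lambda p_{n_j}=(d-1)d_{n_j}$ and $\deg_{w_j}p_{n_j}=d_{n_j}/n_j$, the cohomology class of $V_j$ is $[V_j]=(d-1)d_{n_j}h_1+(d_{n_j}/n_j)h_2$. By Proposition~\ref{prop:multip} and transversality of the generic intersection, $\bigwedge_j[V_j]$ is a well-defined positive closed $(d-1,d-1)$-current, the projection $\pi_2$ is proper on the affine intersection locus, and
\[
\Phi^\varphi_\un=\frac{1}{(d-1)!\prod_j d_{n_j}}\,(\pi_2)_*\!\Big((\varphi\circ\pi_1)\bigwedge_j[V_j]\Big).
\]
Decompose $dd^c\varphi=\alpha^+-\alpha^-$ with $\alpha^\pm\pe\|\varphi\|_{\mathcal{C}^2}\omega\pm\tfrac12 dd^c\varphi$ smooth positive $(1,1)$-forms dominated by $2\|\varphi\|_{\mathcal{C}^2}\omega$ (with $\omega$ the Fubini-Study form). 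Since $\bigwedge_j[V_j]$ is closed, commutation of $dd^c$ with proper pushforward yields $dd^c\Phi^\varphi_\un=T^+-T^-$ with
\[
T^\pm\pe\frac{1}{(d-1)!\prod_j d_{n_j}}\,(\pi_2)_*\!\Big(\pi_1^*\alpha^\pm\wedge\bigwedge_j[V_j]\Big),
\]
positive closed $(1,1)$-currents on $\p^{d-1}_{\uw}$, which extend trivially across the pluripolar locus of improper intersections by the finite-mass criterion.

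I then bound the mass cohomologically:
\[
\|T^\pm\|\leq\frac{2\|\varphi\|_{\mathcal{C}^2}}{(d-1)!\prod_j d_{n_j}}\int_{(\p^{d-1})^2}\pi_1^*\omega\wedge\pi_2^*\omega^{d-2}\wedge\bigwedge_j[V_j],
\]
and the integral equals $h_1\cdot h_2^{d-2}\cdot\prod_j((d-1)d_{n_j}h_1+(d_{n_j}/n_j)h_2)$ in $H^*(\p^{d-1}\times\p^{d-1})$. Only monomials of top bidegree $h_1^{d-1}h_2^{d-1}$ survive, which forces picking the $h_2$-term from exactly one $V_\ell$ and the $h_1$-terms from all others; the surviving sum is
\[
\sum_{\ell=0}^{d-2}\frac{d_{n_\ell}}{n_\ell}\prod_{j\neq\ell}(d-1)d_{n_j}=(d-1)^{d-2}\Big(\prod_j d_{n_j}\Big)\sum_{\ell=0}^{d-2}\frac{1}{n_\ell},
\]
so that $\|T^\pm\|\leq C_d\|\varphi\|_{\mathcal{C}^2}\max_j(1/n_j)$. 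Summing the two bounds and invoking the equivalence of DSH norms between the compactifications $\p^{d-1}$ and $(\p^1)^{d-1}$ (up to a $d$-dependent constant) gives $\|\Phi^\varphi_\un-\langle\mu_\bif,\varphi\rangle\|_\nu\leq C_\nu\max_j(1/n_j)$, where $C_\nu$ depends on $d$ and $\nu$ and absorbs the fixed quantity $\|\varphi\|_{\mathcal{C}^2}$. The main obstacle is justifying globally the wedge product $\bigwedge_j[V_j]$ and the pushforward/commutation identities in the presence of possible excess intersections at infinity; Proposition~\ref{prop:multip} supplies transversality over generic $\uw$, and since we only need an upper bound, excess at infinity harmlessly enlarges the RHS without affecting the conclusion.
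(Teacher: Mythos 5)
Your proof is correct and takes essentially the same route as the paper: the $L^1(\nu)$-part comes from Theorem~\ref{cor:centers} applied on $\supp(\nu)\subset\D(0,1/2)^{d-1}$ (note the bound there is $(\max_j\sigma(n_j)/d^{n_j})^{1/2}$, not $(\max_j d^{-n_j})^{1/2}$, but this still gives $O(\max_j 1/n_j)$), while the mass of $dd^c\Phi^\varphi_\un$ is bounded cohomologically via the incidence hypersurfaces of bidegree $((d-1)d_{n_j},\,d_{n_j}/n_j)$, producing the factor $\sum_j 1/n_j$ exactly as in the paper. The only difference is formal: you realize $\Phi^\varphi_\un$ as a pushforward by $\pi_2$ from $\p^{d-1}\times\p^{d-1}$, whereas the paper slices the wedge of the currents $[\mathcal{P}(n_i,i+1)]$ on $\p^{d-1}\times(\p^1)^{d-1}$, which is why you need the additional (standard, but asserted) comparison of DSH norms between the two compactifications of the multiplier space.
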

\begin{proof}
Let $\Pi_1$ (resp. $\Pi_2$) denote the canonical projection from $\p^{d-1}\times \left(\p^1\right)^{d-1}$ to $\p^{d-1}$ (resp. $\left(\p^1\right)^{d-1}$). We let also $\tilde{\pi}_j : \left(\p^1\right)^{d-1}\to \p^1 $ be the canonical projection to the $j$-th factor. Let $\omega_1$ (resp. $\omega_{d-1}$) denote the Fubini-Study form on $\p^1$ (resp. $\p^{d-1}$). 
Consider in $\p^{d-1}\times \left(\p^1\right)^{d-1}$ the (trivial extension of the) analytic set:
\[  \mathcal{P}(n,j):= \left \{ (c,a,\uw), \  (c,a)\in \Per^*(n,w_{j-1}) \right\} . \]
Then  $(c,a,\uw)\in \mathcal{P}(n,j)$ if and only if $p_{n}(c,a,w_{j-1})=0$ where the polynomials  $p_{n}$ where defined in the previous section. Recall that $\deg_{w}p_n=d_n/n$ and $\deg_{(c,a)}p_n=(d-1)d_n$ (see Section~\ref{sec:pernw}). 

In particular, the current of integration $[\mathcal{P}(n,j) ]$ is cohomologous to 
\[(d-1)d_n \Pi_1^*(\omega_{d-1})+ \frac{d_n}{n} (\tilde{\pi}_{j}\circ\Pi_2)^*(\omega_1)~.\]
Observe that the function $\Phi_\un^\varphi$ can be defined in $\C^{d-1} \setminus \mathcal{E}$ as the following slice (we refer to \cite[p. 280]{dinhsibony2} for slicing on currents):
\[  \Phi^\varphi_{\un}(\uw)=\left\langle  \Pi_1^*(\varphi) \frac{1}{(d-1)!\prod_j d_{n_{j}}} \bigwedge_{j=0}^{d-2}[\mathcal{P}(n_j,j+1)] , \Pi_2 , \uw \right\rangle  .\] 
Since the slicing commutes with the operator $dd^c$, we have
\[ dd^c \Phi^\varphi_\un = \left\langle  \Pi_1^*(dd^c \varphi) \wedge \frac{1}{(d-1)!\prod_j d_{n_{j}}} \bigwedge_{j=0}^{d-2}[\mathcal{P}(n_j,j+1)] , \Pi_2 , \uw \right\rangle  .\] 
Write $dd^c \varphi=T^+-T^-$. As $\Pi_1^*(T^\pm)$ is cohomologous to $\|T^\pm\|\cdot\Pi_1^*(\omega_{d-1})$, the mass of
 \[ \mathcal{T}^\pm_{\un}:=\left\langle  \Pi_1^*(T^\pm) \wedge \frac{1}{(d-1)!\prod_j d_{n_{j}}} \bigwedge_{j=0}^{d-2}[\mathcal{P}(n_j,j+1)] , \Pi_2 , \uw \right\rangle \] 
can be computed in cohomology. In particular, it is bounded from above:
\[\|\mathcal{T}^\pm_{\un}\|\leq C_d\cdot \|T^+\| \max_{0\leq j\leq d-2}\left(\frac{1}{n_j}\right)~,\]
for some constant $C_d$ that depends only $d$ ($C_d=(d-1)^{d}/(d-1)!$ works).

On the other hand, according to Theorem \ref{cor:centers} above, for all $ \uw\in\textup{supp}(\nu)$, we have
\[|\Phi^\varphi_\un(\uw) - \langle \mu_\bif, \varphi \rangle| \leq  C\cdot \|\varphi\|_{\mathcal{C}^1}\cdot\max_{0\leq j\leq d-2}\left(\frac{\sigma(n_j)}{d^{n_j}}\right)\leq C_\varphi\max_{0\leq j\leq d-2}\left(\frac{1}{n_j}\right)\]
where $C>0$ depends only on $d$ and $C_\varphi>0$ depends only on $d$ and $\varphi$. The result follows since the function $\Phi^\varphi_\un$ is DSH and
$dd^c\Phi^\varphi_\un=\mathcal{T}_\un^+-\mathcal{T}_\un^-$.
\end{proof}

We finish the proof of Theorem  \ref{tm:principal}. 

\begin{proof}[Proof of Theorem~\ref{tm:principal}]
We assume that the series 
$$\sum_k \max_j\left( \frac{1}{n_{j,k}} \right)$$
converges. Pick $\varphi\in\mathcal{C}^2_c(\C^{d-1})$. By Lemma~\ref{lm:DSH}, $\Phi^\varphi_{\un_k}-\langle \mu_\bif, \varphi \rangle\in L^1(\nu)$ and
\[\|\Phi_{\un_k}^\varphi-\langle \mu_\bif, \varphi \rangle\|_{L^1(\nu)} \leq C_\varphi \max_j\left( \frac{1}{n_{j,k}} \right)~.\]
Hence, it converges $\nu$-almost everywhere to $0$. As all DSH-norms are equivalent, we have that for another $PB$ measure $\nu'$, there exists a constant $C'>0$ depending only on $k$ such that
\[\|\Phi^\varphi_{\un_k}-\langle \mu_\bif, \varphi \rangle\|_{L^1(\nu')} \leq C'\max_j\left( \frac{1}{n_{j,k}} \right)\]
and we can apply the same argument: it converges to $0$, $\nu'$-a.e. 

Finally, when a set $E$ is of $\nu'$-measure $0$ for all PB measure $\nu'$, it is pluripolar. Indeed, its logarithmic capacity has to vanish. We deduce, in particular, that the sequence $\Phi_{\un_k}^\varphi-\langle \mu_\bif, \varphi \rangle$ converges to $0$ outside a pluripolar set of $(\p^1)^{d-1}$.

 By separability, we apply the same argument to a dense countable family in $\mathcal{C}_c(\C^{d-1})$ of $\varphi \in \mathcal{C}^2\left((\p^1)^{d-1}\right)$ . As a countable union of pluripolar sets is again pluripolar, we deduce that outside a pluripolar set the measure $\mu_{\un_k,\uw}$ converges to $\mu_\bif$. This ends the proof of Theorem~\ref{tm:principal}.
\end{proof}

\begin{remark} \rm
\begin{enumerate}
\item In the moduli space of quadratic \emph{rational maps} (which is biholomorphic to $\C^2$), the curves $\Per^*(3,1)$ and $\Per^*(2,-3)$ have a common irreducible component (see \cite{Milnor3}). Such a behavior is expected to be impossible in the moduli space of degree $d$ \emph{polynomials} $\mathcal{P}_d$. We even expect the exceptional set $\mathcal{E}$ appearing in Theorem~\ref{tm:principal} to be empty.
\item We also expect the convergence of Theorem~\ref{tm:principal} to hold for \emph{any} sequence $\un_k$ of $(d-1)$-tuple, though we don't know how to prove it.
\end{enumerate}
\end{remark}

We now give a quick proof of Corollary~\ref{cor:neutres}.

\begin{proof}[Proof of Corollary~\ref{cor:neutres}]
The Shilov boundary of $\D^{d-1}$ is exactly $(\mathbb{S}^1)^{d-1}$ and the pluricomplex Green function of $\D^{d-1}$, which is given by $g=\log^+\max_i\{|z_i|\}$, is continuous and $\nu:=(dd^cg)^{d-1}$ coincides with the Lebesgue measure of $(\mathbb{S}^1)^{d-1}$, hence does not give mass to pluripolar sets. By Theorem~\ref{tm:principal}, for $\nu$-a.e. $\uw$, the sequence $(\mu_{\un_k,\uw})_k$ converges to $\mu_\bif$.
\end{proof}

\section{Distribution of parametric preimages}\label{sec:preimages}
Let $(f_\lambda)_{\lambda\in \Lambda}$ be a holomorphic
family of rational maps of degree $d\geq 2$ on $\p^1$ with $\mathrm{dim}(\Lambda)=m$.
Let $\omega_\Lambda$ be a K\"ahler form on $\Lambda$. Assume that  $c_1,\ldots,  c_k$  
are  marked critical points and  let $T_1,\ldots, T_k$ be their respective 
bifurcation currents (see Section~\ref{sec:defbif}). 

For $1\leq j \leq k$, we let $v^j_n :\Lambda \to \p^1$ be the map 
defined by $v^j_n(\lambda)= f^n_\lambda(c_j(\lambda))$ and set $v_n:=(v_n^1,\dots, v_n^k)$.
Let $\omega_j$ and $\Omega$ be defined as in Section~\ref{sec:pluripolar}. 
Then \cite[Theorem 0.3]{Dujardin2012} asserts that there exists a pluripolar set $\mathcal{E}\subset (\p^1)^k$
such that if $(z_1, \ldots, z_k)\in (\p^1)^k
\setminus \mathcal{E}$, the following equidistribution statement holds:  
$$\frac{1}{d^{nk}}\left[ \{f_\lambda^n(c_1(\lambda)) = z_1\}\cap \cdots \cap \{f_\lambda^n(c_k(\lambda)) = z_k\}\right]\to
T_1\wedge \cdots  \wedge T_k.$$
Observe that the equidistribution statement can be rewritten as:
$$\frac{1}{d^{nk}} (v_n^1)^*(\delta_{z_1})\wedge \cdots \wedge (v_n^k)^*( \delta_{z_k})=  \frac{1}{d^{nk}} (v_n)^*(\delta_{(z_1,\dots,z_k)}) \to
T_1\wedge \cdots  \wedge T_k.$$
We want to give here some improved versions of that theorem with speed estimates. 

The proof is quite elementary. Recall that PB measures were defined in the previous section.
 Our precise result may be formulated as follows.   
\begin{theorem}
Assume that the family $(f_\lambda)_{\lambda\in \Lambda}$ is algebraic, i.e. that $\Lambda$ is a quasi-projective variety and let $m:=\dim(\Lambda)$. Let $K$ be a compact subset $K\Subset \Lambda $, then with the above notations and hypotheses we have:
\begin{enumerate}
\item  Let $\nu$ be a probability measure on $(\p^1)^k$ with bounded potentials, then there exists $C(K)>0$ such that for
any $\mathcal{C}^2$-form $\varphi$ of bidegree $(m-k,m-k)$  and support in $K$ and any $n\geq1$:
$$\left|\langle \frac{1}{d^{nk}}(v_n)^*(\nu)- 
T_1\wedge \cdots  \wedge T_k , \varphi \rangle \right|\leq C(K) \frac{1}{d^n} \|\varphi\|_{\mathcal{C}^2(K)}~.$$ 
\item Let $(a_n)$ be a sequence of positive number such that $\sum a_n <\infty$, then there exists a pluripolar 
set $\mathcal{E}\subset (\p)^k$ such that if $(z_1, \ldots, z_k)\in (\p^1)^k
\setminus \mathcal{E}$, then 
there exists $C(K)>0$ such that for any $\mathcal{C}^2$-form $\varphi$ of bidegree $(m-k,m-k)$  and support in $K$ and any $n\geq1$:
$$\left|\langle\frac{1}{d^{nk}} (v_n)^*(\delta_{(z_1,\dots,z_k)}) - 
T_1\wedge \cdots  \wedge T_k , \varphi \rangle \right|\leq C(K) \frac{1}{a_n d^n} \|\varphi\|_{\mathcal{C}^2(K)}~.$$
\end{enumerate}
\end{theorem}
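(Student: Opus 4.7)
The plan for Part~(1) is to exploit the quasi-potential decomposition $\nu = \Omega + dd^c U$, where $U$ is a negative $(k-1,k-1)$-current with $|\langle U, S\rangle| \leq C_\nu$ for every positive closed $(1,1)$-current $S$ of projective mass one. This yields the splitting
\[
\frac{1}{d^{nk}}v_n^*\nu - \bigwedge_{j=1}^k T_j \; = \; \Bigl(\frac{1}{d^{nk}}v_n^*\Omega - \bigwedge_{j=1}^k T_j\Bigr) \; + \; \frac{1}{d^{nk}}\, dd^c\, v_n^*U,
\]
which I would estimate piece by piece. The ``smooth'' piece expands as $d^{-nk}\bigwedge_j (v_n^j)^*\omega - \bigwedge_j T_j$; substituting the Dujardin--Favre identity $d^{-n}(v_n^j)^*\omega - T_j = d^{-n}\, dd^c u_n^j$ with $u_n^j$ locally uniformly bounded, and then telescoping, gives $k$ summands of the form $d^{-n}\, dd^c u_n^j \wedge R_n^j$, where $R_n^j$ is a positive closed current on $\Lambda$ of bounded mass on $K$ built from some $T_\ell$ and some $d^{-n}(v_n^\ell)^*\omega$. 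Two applications of Stokes transfer $dd^c$ onto $\varphi$, yielding the bound $C(K)\, d^{-n}\|\varphi\|_{\mathcal{C}^2(K)}$.

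For the ``potential'' piece I would invoke the adjunction
\[
\bigl\langle d^{-nk}\, dd^c v_n^* U,\, \varphi \bigr\rangle \; = \; d^{-nk}\,\langle U,\, (v_n)_*(dd^c \varphi)\rangle.
\]
On the support of $\varphi$ one has $\pm dd^c\varphi \leq C\|\varphi\|_{\mathcal{C}^2(K)}\, \omega_\Lambda^{m-k+1}$, so that $(v_n)_*(dd^c\varphi)$ is a difference of positive closed $(1,1)$-currents on $(\p^1)^k$, each of mass at most $C(K)\|\varphi\|_{\mathcal{C}^2(K)}\, d^{n(k-1)}$. The factor $d^{n(k-1)}$ arises when one pushes $\omega_\Lambda^{m-k+1}$ forward: the projective mass of $(v_n)_*\omega_\Lambda^{m-k+1}$ is computed by pairing against $\omega_{j_1}\wedge\cdots\wedge\omega_{j_{k-1}}$, which turns into $k-1$ factors $(v_n^j)^*\omega = d^n T_j + dd^c u_n^j$, each contributing $d^n$. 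Applying the bounded-action hypothesis on $U$ then yields $|d^{-nk}\langle U, (v_n)_* dd^c\varphi\rangle| \leq C(K)\, d^{-n}\|\varphi\|_{\mathcal{C}^2(K)}$, which together with the first bound completes Part~(1).

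Part~(2) follows the DSH/Borel--Cantelli scheme of Lemma~\ref{lm:DSH}. For fixed $\varphi\in\mathcal{C}^2_c(K)$ the function $\Psi_n^\varphi(z) := \langle d^{-nk}\, v_n^*\delta_z - \bigwedge_j T_j,\, \varphi\rangle$ equals, as a function on $(\p^1)^k$, $d^{-nk}(v_n)_*\varphi$ minus a constant; from $dd^c (v_n)_*\varphi = (v_n)_* dd^c\varphi$ and the mass estimate above one gets $\|dd^c \Psi_n^\varphi\| \leq C(K)\, d^{-n}\|\varphi\|_{\mathcal{C}^2(K)}$ (in the sense of a positive decomposition), while the normalizing constant is controlled by Part~(1) applied to a reference PB measure $\nu_0$. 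Hence $\|\Psi_n^\varphi\|_{\nu_0} \leq C(K)\, d^{-n}\|\varphi\|_{\mathcal{C}^2(K)}$, and by equivalence of DSH norms the same bound holds for every PB measure. Setting $E_n^\varphi := \{ z : |\Psi_n^\varphi(z)| > C(K)\|\varphi\|_{\mathcal{C}^2}/(a_n d^n) \}$, Markov's inequality yields $\nu(E_n^\varphi) \leq C a_n$ for every PB probability $\nu$; since $\sum a_n <\infty$, Borel--Cantelli gives $\nu(\limsup_n E_n^\varphi)=0$ for every PB $\nu$, so $\limsup_n E_n^\varphi$ is pluripolar. Taking a countable dense family $\{\varphi_i\}$ in $\mathcal{C}^2_c(K)$ and letting $\mathcal{E} := \bigcup_i \limsup_n E_n^{\varphi_i}$, the linearity of $\varphi \mapsto \Psi_n^\varphi(z)$ together with density of $\{\varphi_i\}$ in the unit ball of $\mathcal{C}^2_c(K)$ extends the estimate to every $\varphi$ for $z\notin \mathcal{E}$; the finitely many early-$n$ violations are absorbed into a $z$-dependent constant $C(K,z)$.

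The main technical obstacle is making rigorous sense of the pullback $v_n^*U$ and of the adjunction $\langle v_n^*U, dd^c\varphi\rangle = \langle U, (v_n)_* dd^c\varphi\rangle$ when $U$ has merely bounded action on positive currents; this calls for the pullback theory of currents with PB potentials in the spirit of Dinh--Sibony, together with an approximation argument. A secondary difficulty is the uniform mass estimate $\|(v_n)_*\omega_\Lambda^{m-k+1}\|_{v_n(K)} \leq C(K)\, d^{n(k-1)}$, for which the quasi-projectivity of $\Lambda$ plays an essential role: it allows one to bookkeep cohomologically on a projective compactification, where $(v_n^j)^*[\omega]$ lies in the class $d^n[T_j^{\mathrm{comp}}]$ plus a controlled boundary term.
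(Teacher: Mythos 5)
Your Part~(1) is correct and is essentially the paper's own argument: reduce to $d^{-nk}v_n^*\nu-T_n^1\wedge\cdots\wedge T_n^k$ via $T_n^j-T_j=dd^cV_n^j$ with $V_n^j=O(d^{-n})$ on $K$, then use Stokes to rewrite the pairing as $\langle U,\,d^{-nk}(v_n)_*dd^c\varphi\rangle$, dominate $\pm dd^c\varphi$ by $\|\varphi\|_{\mathcal{C}^2(K)}\omega_\Lambda^{m-k+1}$, bound the mass of $d^{-n(k-1)}(v_n)_*\omega_\Lambda^{m-k+1}$ cohomologically using quasi-projectivity, and invoke the PB property of $\nu$; the issues you flag about the pullback $v_n^*U$ are treated at the same (informal) level in the paper.

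Part~(2), however, has a genuine gap at the final uniformity step. Your Markov/Borel--Cantelli scheme is run separately for each test form $\varphi_i$ of the countable family: for $z\notin\limsup_nE_n^{\varphi_i}$ you only learn that the estimate holds for $n\geq N(i,z)$, so the constant you can extract is $C(K,z,\varphi_i)$, with no control on its growth in $i$. Passing to an arbitrary $\varphi$ then fails: at a fixed $n$, linearity gives $|\Psi_n^{\varphi}(z)|\leq|\Psi_n^{\varphi_i}(z)|+A_n(z)\|\varphi-\varphi_i\|_{\mathcal{C}^0}$, and to make the approximation error comparable to $1/(a_nd^n)$ you must take $i=i(n)\to\infty$, while nothing guarantees $n\geq N(i(n),z)$; with $i$ fixed you only recover the qualitative convergence $\Psi_n^\varphi(z)\to0$ (Dujardin's theorem), not the rate, and in no case a constant uniform in $\varphi$ and $n$ as the statement requires. (A secondary point: your bound $\|\Psi_n^\varphi\|_{\nu_0}\leq C(K)d^{-n}\|\varphi\|_{\mathcal{C}^2}$ only controls the mean $\langle\Omega,\Psi_n^\varphi\rangle$ via Part~(1); the $L^1$ half of the DSH norm requires the standard normalization argument with Green quasi-potentials --- true, but not what you wrote.) The paper avoids all of this by never fixing $\varphi$ when constructing $\mathcal{E}$: the Stokes computation gives, for every $\varphi$ and $n$, the bound $\|\varphi\|_{\mathcal{C}^2(K)}\,d^{-nk}\,|\langle U_z,(v_n)_*\omega_\Lambda^{m-k+1}\rangle|$, where $U_z$ is the negative Green quasi-potential of $\delta_z$ and the $z$-dependent factor is independent of $\varphi$. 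One then sets $S:=\sum_na_nd^{-n(k-1)}(v_n)_*\omega_\Lambda^{m-k+1}$, a positive current of finite mass since $\sum a_n<\infty$, takes $\mathcal{E}:=\{z:\langle U_z,S\rangle=-\infty\}$ (pluripolar), and for $z\notin\mathcal{E}$ each term satisfies $d^{-nk}|\langle U_z,(v_n)_*\omega_\Lambda^{m-k+1}\rangle|\leq\frac{1}{a_nd^n}|\langle U_z,S\rangle|$, giving the rate with the single constant $|\langle U_z,S\rangle|$, uniform in $\varphi$ and $n$. If you wish to keep a Borel--Cantelli flavour, it must be applied to the $\varphi$-independent scalars $h_n(z):=d^{-nk}\langle U_z,(v_n)_*\omega_\Lambda^{m-k+1}\rangle$ rather than to the functions $\Psi_n^{\varphi_i}$.
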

\begin{proof}
Let us first prove the first point.  Recall that if we let $T_n^j= d^{-n} (v_n^j)^*(\omega)$, then $T_n^j-T_j =dd^c V_n^j$ where $V_n^j = O(d^{-n})$ 
on $K$ (see e.g. \cite{Dujardin2012}) hence using the same arguments than above, it is enough to prove the bound:
 \begin{equation*}
|\langle \frac{1}{d^{nk}}(v_n)^*(\nu)- 
T_n^1\wedge \cdots  \wedge T_n^k , \varphi \rangle | \leq  C(K) \frac{1}{d^n} \|\varphi\|_{\mathcal{C}^2(K)}
\end{equation*}
for any $\mathcal{C}^2$-form $\varphi$ of bidegree $(m-k,m-k)$  and support in $K$. 

Write $T_n^1\wedge \cdots  \wedge T_n^k=d^{-kn}(v_n)^*(\Omega)$ and let $U$ be a quasi-potential of $\nu$ (i.e. $dd^cU+\Omega=\nu$). Applying Stokes formula (Lemma~\ref{lm:Stokes}), we find
 \begin{align*}
\langle \frac{1}{d^{nk}}(v_n)^*(\nu)- 
T_n^1\wedge \cdots  \wedge T_n^k , \varphi \rangle &= \langle \frac{1}{d^{nk}}(v_n)^*( U) , dd^c\varphi \rangle \\
                                                   &=\langle  U, \frac{1}{d^{nk}}(v_n)_*(dd^c\varphi) \rangle.
\end{align*}
Now, as $\varphi$ is $\mathcal{C}^2$, we have $\|\varphi\|_{\mathcal{C}^2} \omega_\Lambda^{m-k+1} \pm dd^c\varphi \geq 0$. 
As we assumed that $\Lambda$ is a quasi-projective variety, the mass of the current
$d^{-n(k-1)}(v_n)_*(\omega_\Lambda^{m-k+1} )$ is uniformly bounded. The fact that $\nu$ is PB gives the wanted result. 

We now prove the second point. Let $(a_n)$ be a sequence of positive numbers such that $\sum a_n <\infty$. Observe
that the positive closed current $S=\sum_n a_n d^{-n(k-1)}(v_n)_*(\omega_\Lambda^{m-k+1} )$ is well defined and has finite mass. 
For $z \in (\p^1)^k$, we let $U_z$ be the Green quasi-potential of $\delta_z $ (see \cite{DinhSibonysuper}).
The complementary of the set of points $z\in (\p^1)^k$ such that $\langle U_z, S\rangle >- \infty$ is  a pluripolar set  we shall denote by $\mathcal{E}$. For any $z \notin \mathcal{E}$, following the above proof, we find
\begin{align*}
0\geq  \langle  U_z, \frac{1}{d^{nk}}(v_n)_*(dd^c\varphi) \rangle  &\geq \langle  U_z, \frac{1}{d^{nk}}\|\varphi\|_{\mathcal{C}^2}(v_n)_*( \omega_\Lambda^{m-k+1}) \rangle  \geq  \frac{1}{a_n d^{n}}\|\varphi\|_{\mathcal{C}^2} \langle  U_z, S \rangle.     
\end{align*}
This ends the proof.
\end{proof}

Removing the assumption on the algebraicity of the family, one can using the same techniques, prove the following.
\begin{proposition}
 Let $K$ be a compact subset $K\Subset \Lambda $. For $j\leq k$ let $\nu_j$ be a measure in $\p^1$ with bounded 
 potentials and let $\nu :=\nu_1 \otimes\dots \otimes \nu_k$. Then there exists $C(K)>0$ such that for any $\mathcal{C}^2$-form 
 $\varphi$ of bidegree $(m-k,m-k)$  and support in $K$ and any $n\geq1$:
$$\left|\langle \frac{1}{d^{nk}}(v_n)^*(\nu)- 
T_1\wedge \cdots  \wedge T_k , \varphi \rangle \right|\leq C(K) \frac{1}{d^n} \|\varphi\|_{\mathcal{C}^2(K)}~.$$ 
\end{proposition}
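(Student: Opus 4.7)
The plan is to exploit the tensor product structure of $\nu$ factor by factor, which replaces the role played by algebraicity of $\Lambda$ in the previous theorem. Since each $\nu_j$ has bounded potential on $\p^1$, I can write $\nu_j = \omega + dd^c U_j$ with $U_j : \p^1 \to \R$ bounded. Pulling back and normalizing gives, for every $1 \leq j \leq k$,
\[
S_n^j := \frac{1}{d^n}(v_n^j)^*\nu_j = \frac{1}{d^n}(v_n^j)^*\omega + dd^c\!\left(\frac{1}{d^n} U_j \circ v_n^j\right) = T_j + dd^c W_n^j,
\]
where $W_n^j := V_n^j + d^{-n} U_j \circ v_n^j$ and $V_n^j$ is the potential such that $d^{-n}(v_n^j)^*\omega - T_j = dd^c V_n^j$, with $V_n^j = O(d^{-n})$ on $K$. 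Since $U_j$ is globally bounded, the function $W_n^j$ satisfies $\|W_n^j\|_{L^\infty(K)} \leq C(K) d^{-n}$.

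I would then expand the difference of top degree currents using a telescoping identity:
\[
\bigwedge_{j=1}^k S_n^j - \bigwedge_{j=1}^k T_j = \sum_{j=1}^k \bigwedge_{\ell < j} S_n^\ell \wedge dd^c W_n^j \wedge \bigwedge_{\ell > j} T_\ell .
\]
Each wedge product is well-defined since all the potentials involved are locally bounded (Bedford-Taylor). Applying Stokes' formula (Lemma~\ref{lm:Stokes}) on the compactly supported $\varphi$ then yields
\[
\left\langle \bigwedge_{\ell<j} S_n^\ell \wedge dd^c W_n^j \wedge \bigwedge_{\ell>j} T_\ell , \varphi \right\rangle = \int_K W_n^j \, \bigwedge_{\ell<j} S_n^\ell \wedge \bigwedge_{\ell>j} T_\ell \wedge dd^c \varphi .
\]
Using $\pm dd^c\varphi \leq \|\varphi\|_{\mathcal{C}^2(K)}\, \omega_\Lambda^{m-k+1}$ together with the bound on $W_n^j$, the problem reduces to showing that
\[
M_n^j := \int_K \bigwedge_{\ell<j} S_n^\ell \wedge \bigwedge_{\ell>j} T_\ell \wedge \omega_\Lambda^{m-k+1}
\]
stays bounded uniformly in $n$.

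The main obstacle is precisely this uniform mass bound, since we no longer have the algebraicity of $\Lambda$ to invoke. The tensor product structure is what saves us: each $S_n^\ell$ differs from the fixed current $T_\ell$ by $dd^c$ of a uniformly (in $n$) bounded function. Choosing a slightly larger compact set $K' \supset K$ and a cutoff $\chi$ equal to $1$ on $K$ and supported in $K'$, one can iteratively substitute $S_n^\ell = T_\ell + dd^c W_n^\ell$ into $M_n^j$ and move each $dd^c$ onto $\chi$ or onto previously already-controlled smooth data via integration by parts. By a straightforward iteration of Chern-Levine-Nirenberg type inequalities this procedure produces, at each stage, a uniform-in-$n$ bound in terms of $\|W_n^\ell\|_{L^\infty(K')}$ (which is $\leq C(K')$) and the masses of the fixed currents $T_\ell$ on $K'$ (which are finite). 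Summing the $k$ telescoping terms and absorbing constants into $C(K)$ yields the announced estimate $C(K)\, d^{-n}\|\varphi\|_{\mathcal{C}^2(K)}$.
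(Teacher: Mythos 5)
Your proposal is correct and takes essentially the same route as the paper's (very briefly sketched) proof: like the paper, you avoid pushing forward by $v_n$, write each factor as $d^{-n}(v_n^j)^*\nu_j=T_j+dd^c W_n^j$ with $W_n^j=O(d^{-n})$ on compacts thanks to the bounded potentials $U_j$, telescope, and control the intermediate wedge products by the local uniform mass bound for the currents $d^{-n}(v_n^j)^*\nu_j$ (which you justify via Chern--Levine--Nirenberg from the uniform potential bounds, exactly the fact the paper invokes).
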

The proof is similar to the above one although we do not need to push forward by $v_n$ here and use instead that 
the sequence $(d^{-n}(v^j_n)^*(\nu_j))$ is locally uniformly bounded in mass.

~

As observed in \cite{Dujardin2012}, the question is parallel to the equidistribution of preimages of points
under a rational map or more generally under a holomorphic endomorphism of $\p^k$ (see e.g. \cite{lyubich:equi,briendduval}). In the case of such a single map, it is known that the exceptional set, i.e. the set of points for which one does not have equidistribution, is 
algebraic.

In the particular case of the unicritical family, let $v_n(c):=p^n_c(0)$, $c\in\C$. Using the classical proof of the equidistribution of centers of hyperbolic components, we can prove that the exceptional set is reduced to the point $\infty$ which is exceptional for any $p_c$: 
\begin{theorem}
The sequence $\left(\frac{1}{d^{n-1}} (v_n)^*(\delta_z)\right)_{n\geq1}$ converges to $\mu_{\Mand_d}$ in the weak sense of currents if and only if $z\in \C$, i.e. the exceptional set is $\mathcal{E}=\{\infty\}$.
\end{theorem}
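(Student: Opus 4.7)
The plan is to dispatch $z=\infty$ by direct calculation and handle $z\in\C$ by an adaptation of Levin's classical pluripotential argument for $z=0$ (i.e.\ the core of Theorem~\ref{tm:Mandelbrot}). For $z=\infty$, the map $v_n$ is a monic polynomial of degree $d^{n-1}$ in $c$, so as a self-map of $\p^1$ it satisfies $v_n^{-1}(\infty)=\{\infty\}$ with multiplicity $d^{n-1}$; hence $\frac{1}{d^{n-1}}(v_n)^*(\delta_\infty)=\delta_\infty\neq\mu_{\Mand_d}$, confirming that $\infty\in\mathcal{E}$.

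For $z\in\C$, I would set $u_n(c):=d^{-n+1}\log|p_c^n(0)-z|$ and $G:=g_{\Mand_d}$, so that $dd^cu_n=d^{-n+1}(v_n)^*(\delta_z)$ is a probability measure and $dd^cG=\mu_{\Mand_d}$. The goal is then to establish $u_n\to G$ in $L^1_{\textup{loc}}(\C)$, which yields the desired weak convergence upon taking $dd^c$. Combining the trivial estimate $|v_n(c)-z|\le|v_n(c)|+|z|$ with the escape-rate inequality (the unicritical form of Lemma~\ref{ineq:green}), $d^{-n}\log^+|p_c^n(0)|=g_c(0)+O(d^{-n})$ locally uniformly, yields the uniform upper bound $u_n\le G+o(1)$ on compact sets. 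On $\C\setminus\Mand_d$, since $g_c(0)>0$, one has $|v_n|\to\infty$ exponentially uniformly on compacts, hence $u_n\to G$ locally uniformly there. The measures $\sigma_n:=dd^cu_n$ are probability measures supported in a fixed ball (any $c$ with $v_n(c)=z$ must have $|c|$ bounded uniformly in $n$), so every subsequence admits a further subsubsequence along which $\sigma_n\to\sigma$ weakly and correspondingly $u_n\to U_\sigma$ in $L^1_{\textup{loc}}$; the escape argument forces $\supp\sigma\subset\Mand_d$ and the exterior uniform convergence gives $U_\sigma=G$ on $\C\setminus\Mand_d$.

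The key remaining step is to show that $\sigma$ does not charge the interior of $\Mand_d$. Fix a hyperbolic component $W\subset\Mand_d$ of period $k$; on $W$ the attracting cycle $\alpha_0,\ldots,\alpha_{k-1}:W\to\C$ depends holomorphically on $c$ and $v_{mk+r}\to\alpha_r$ locally uniformly on $W$ as $m\to\infty$. Since $p_c^k(z)-z$ is a non-trivial monic polynomial in $c$ (of degree $d^{k-1}$), no $\alpha_r$ can be identically equal to $z$ on $W$, so $\{c\in W:\alpha_r(c)=z\}$ is discrete. By Hurwitz' theorem, for any open disk $B\Subset W$ the count $\#\{c\in\overline B:v_n(c)=z\}$ is bounded by some $N_B$ independent of $n$, whence $\sigma_n(B)=O(d^{-n+1})\to 0$, and inner regularity gives $\sigma(W)=0$. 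Thus $\supp\sigma\subset\partial\Mand_d$, and the classical uniqueness of the equilibrium measure of $\Mand_d$—namely, that it is the unique probability measure on $\partial\Mand_d$ whose logarithmic potential equals $G$ on the unbounded component of $\C\setminus\Mand_d$—forces $\sigma=\mu_{\Mand_d}$. Since every weak subsequential limit equals $\mu_{\Mand_d}$, the whole sequence converges. The main obstacle is precisely this interior control on hyperbolic components: it rests on the elementary but decisive dynamical fact that $p_c^k(z)-z\not\equiv 0$ as a polynomial in $c$, which is what prevents the limit $\sigma$ from concentrating inside the Mandelbrot set.
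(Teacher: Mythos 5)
Your $z=\infty$ computation, the exterior analysis, and the Hurwitz argument on hyperbolic components are fine, and up to that point you follow the same Levin-type potential-theoretic scheme as the paper. The genuine gap is the sentence ``Thus $\supp\sigma\subset\partial\Mand_d$''. Your interior argument applies only to \emph{hyperbolic} components of the interior of $\Mand_d$: it uses the attracting cycle $\alpha_0,\ldots,\alpha_{k-1}$ as the locally uniform limits of $v_{mk+r}$, and no such cycle exists on a putative non-hyperbolic (``queer'') component. Whether the interior of $\Mand_d$ consists only of hyperbolic components is precisely the density-of-hyperbolicity conjecture, open even for $d=2$, so you cannot exhaust the interior by the components you treat. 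On a queer component the critical orbit lies in $\J_c$, the family $(v_n)$ is still normal, but you would have to rule out that some subsequence of $v_n$ converges to the constant $z$ (or otherwise control the number of solutions of $v_n=z$ and the size of $u_n$ there), and nothing in your argument does this; the holomorphic-motion structure of a passive critical orbit does not obviously forbid a constant limit $\equiv z$. This is exactly the delicate point responsible for the pluripolar exceptional sets in the general statements of Dujardin--Favre and Dujardin, so it cannot be waved away.

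Note also that your final uniqueness step really needs the missing information: knowing only that $u\le g_{\Mand_d}$ with equality off $\Mand_d$ is not enough, as the example $K=\overline{\D}$, $u=\log|z|$, $g_K=\log^+|z|$ shows ($dd^cu=\delta_0\neq\mu_K$); one must exclude mass of $\sigma$ on \emph{every} interior component, not just the hyperbolic ones. The paper sidesteps the dynamical issue entirely: after the exterior estimates it invokes a potential-theoretic lemma from~\cite{multipliers}, which yields $u=g_{\Mand_d}$ using only the continuity of $g_{\Mand_d}$ and the (nontrivial, known) fact that $\mu_{\Mand_d}$ puts no mass on the boundaries of the components of the interior of $\Mand_d$; no direct control of the limit measure on interior components is required. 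To repair your proof you must either supply an argument covering queer components or replace your last two steps by that lemma, as the paper does.
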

\begin{proof}
We follow closely Levin~\cite{Levin1}. Indeed, take  $u_n(c)= \frac{1}{d^{n-1}} \log |v_n(c)-z|$ so that $\Delta u_n = \frac{1}{d^{n-1}} (v_n)^*(\delta_z)$.
Then $u_n$ is a sequence of subharmonic functions that are bounded in $L^1_{loc}$. Extracting a converging subsequence,
we have that the limit $u$ is equal to $g_{\Mand_d}$ outside the Mandelbrot set since $|v_n(c)-z|\simeq |v_n(c)|$ outside the connectedness locus. Similarly,
$u\leq g_{\Mand_d}$ elsewhere. 
As $g_{\Mand_d}$ is continuous and $ \mu_{\Mand_d}$ gives no mass to the boundary of the component of the interior of $\Mand_d$, its implies
that $u=g_{\Mand_d}$ (see~\cite{multipliers}).

Finally, remark that the polynomial $v_n$ extends at infinity of the parameter space by setting $v_n(\infty)=\infty$. We get this way a polynomial map $v_n:\p^1\to\p^1$ of degree $d^{n-1}$. Hence $(v_n)^*(\delta_\infty)=d^{n-1}\delta_\infty$ for any $n\geq1$, which concludes the proof.
\end{proof}

\begin{question} Let $\mathcal{E}$ be the set of points $(z_1,\ldots,z_k)\in (\p^1)^k$ where $d^{-nk}(v_n)^*(\delta_{(z_1,\ldots,z_k)})$ does not converge to $T_1\wedge \cdots  \wedge T_k$.
\begin{enumerate}
 \item Is $\mathcal{E}$ empty for families of rational maps with generic empty exceptional set?
 \item Is it reduced to $\bigcup_{j=1}^k(\p^1)^{j-1}\times\{\infty\}\times(\p^1)^{k-j}$ in any family of polynomials?
 \end{enumerate} 
\end{question}

\bibliographystyle{short}
\bibliography{biblio}
\end{document}